\documentclass[10pt]{amsart}
\usepackage{amssymb, amsmath, amsthm,amsrefs}
\usepackage{latexsym}
\usepackage{color}
\usepackage{exscale}
\usepackage{fullpage}
\usepackage{bm, bbm, mathrsfs}

%

\newtheorem{theorem}{Theorem}[section]

\newtheorem{proposition}[theorem]{Proposition}

\newtheorem{problem}{Problem}
\newtheorem{definition}[theorem]{Definition}

\newtheorem{example}[theorem]{Example}
\newtheorem{observation}[theorem]{Observation}
\newtheorem{remark}[theorem]{Remark}



\def\s0{{ s_0}}

\def\ts0{{\tilde s_0}}

\def\tor{{\bf T}}
\def\cur{{\bf R}}

\def\eq#1{(\ref{#1})}
\def\nn{\nonumber}

\def\({\left(\begin{array}{cccccc}}
\def\){\end{array}\right)}

\def\bes{\begin{eqnarray}}
\def\ees{\end{eqnarray}}

\newcommand{\del}{\partial}
\newcommand{\pd}[2]{\frac{\partial{#1}}{\partial{#2} }}
\newcommand{\beq}{\begin{equation}}
\newcommand{\eeq}{\end{equation}}
\newcommand{\bea}{\begin{eqnarray}}
\newcommand{\eea}{\end{eqnarray}}
\newcommand{\beann}{\begin{eqnarray*}}
\newcommand{\eeann}{\end{eqnarray*}}
\newcommand{\eps}{\ensuremath{\epsilon}}

\newcommand{\lam}{\ensuremath{\lambda}}
\newcommand{\bet}{\ensuremath{\beta}}
\newcommand{\Beta}{\ensuremath{\mathcal B}}

\newcommand{\RR}{\mathbb{R}}
\newcommand{\R}{\ensuremath{\mathfrak{R}}}
\newcommand{\ti}{\tilde}


\newcommand\br[1]{\overline{{#1}}}
\newcommand{\pf}{\begin{proof}}
\newcommand{\foorp}{\end{proof}}
\newcommand{\nquad}{\negthickspace\negthickspace
\negthickspace\negthickspace}
\newcommand{\nqquad}{\nquad\nquad}


\DeclareMathOperator{\rank}{rank}

\DeclareMathOperator{\diag}{diag}

\numberwithin{equation}{section}
\begin{document}
\title{Extensions for Systems of Conservation Laws}
\author{Helge Kristian Jenssen }
\address{ H.~K.~Jenssen, Department of Mathematics, Penn State University, University Park, 
State College, PA 16802, USA ({\tt jenssen@math.psu.edu}).}
\thanks{H.~K.~Jenssen was partially supported by NSF grants DMS-0539549 (CAREER) and DMS-1009002.}
\author{Irina A. Kogan}
\address{Irina A. Kogan, Department of Mathematics, North Carolina State University, 
Raleigh, NC, 27695, USA  ({\tt iakogan@ncsu.edu}).}
\thanks{ I.~A.~Kogan was partially supported by NSF grant CCF-0728801 and NSA grant H98230-11-1-0129}
\date{\today}

\begin{abstract}
	Entropies (convex extensions) play a central role in the theory of hyperbolic conservation laws
	by providing intrinsic selection criteria for weak solutions and local well-posedness 
	for the Cauchy problem. While many systems occurring in physical models are equipped with 
	extensions, it is well-known that existence of a non-trivial (i.e.\ non-linear) extension requires
	the solution of an over-determined system of equations.
	On the other hand, so-called rich systems are equipped with large sets of entropies. 
	Beyond these general facts little seems to be known about ``how many" extensions a particular 
	system of conservation laws has. 
	
	
	For a given hyperbolic system $u_t+f(u)_x=0$, a standard approach is to analyze directly 
	the second order PDE system for the extensions. 
	Instead we find it advantageous to consider the equations satisfied by the 
	lengths $\beta^i$ of the right eigenvectors $r_i$ of $Df$, as measured with respect to the inner 
	product defined by an extension. For a given eigen-frame $\{r_i\}$ the extensions are determined 
	uniquely, up to trivial affine parts, by these lengths. 
	
	This geometric formulation provides a natural and systematic approach to existence of extensions. 
	By considering the eigen-fields $r_i$ as prescribed our results automatically apply to all systems with the same 
	eigen-frame. As a computational benefit we note that the equations for the lengths $\beta^i$ form a 
	first order algebraic-differential system (the $\beta$-system) to which standard integrability theorems 
	can be applied. The size of the set of extensions follows by determining the 
	number of free constants and functions present in the general solution to the $\beta$-system.
	We provide a complete breakdown of the various possibilities for $3\times 3$-systems, as well as 
	for rich frames in any dimension provided the $\beta$-system has trivial algebraic part. The latter
	case covers $2\times 2$-systems, strictly hyperbolic rich systems of any size, and 
	any rich system with an orthogonal eigen-frame.
	
	Our analysis is relevant whenever there exists a non-trivial conservative system whose eigen-frame 
	coincides with the given frame. This issue was analyzed by the authors in \cite{jk1}, where the problem 
	was formulated in terms of another algebraic-differential system, the ``$\lambda$-system," whose solutions  
	provide the characteristic speeds (eigenvalues) of the resulting conservative systems.
	We investigate the relationships between the $\lambda$- and $\beta$-systems and recover 
	standard results for symmetric systems (orthogonal frames).  
	It turns out that despite close structural connections between the $\lambda$- and the $\beta$-system, 
	there is no general relationship between the sizes of their solution sets.
	
	We provide a list of examples that illustrate our results.
\end{abstract} 

\maketitle

\tableofcontents

\section{Introduction, background, and discussion} \label{intro}
\subsection{Notation and conventions} Unless otherwise stated the following will be in force:
\begin{itemize}
\item $u=(u^1,\dots,u^n)$  denotes a fixed coordinate system on an open domain 
	$\Omega\subset\RR^n$. The domain $\Omega$ is assumed to be smoothly contractible to a point. 
	
\item	We denote the $(i,j)$-entry (i.e., the element in the $i$th row and the $j$th column) 
	of a matrix $A$ by $A^i_j$. Superscript $\,{}^T$ denotes transpose. 
	\item All vectors and vector functions are assumed to be 
	column vectors, except gradients of scalar maps $\phi:\RR^n\to\RR$ which are row vectors: 
	$\nabla\phi=\big(\frac{\del \phi}{\del u^1},\dots,\frac{\del \phi}{\del u^n}\big)\in\RR^{1\times n}$. 
	
	\item The Hessian of $\phi$ is $D^2\phi=\big(\frac{\del^2 \phi}{\del u^i\del u^j}\big)\in\RR^{n\times n}.$
	For a map $f:\Omega\to\RR^n$ the Jacobian matrix of $f$ is denoted by 
	$Df=\big(\frac{\del f^i}{\del u^j}\big)\in\RR^{n\times n}$. An $n\times n$-matrix $A(u)$ is a $u$-Hessian 
	($u$-Jacobian) if there is a map 
	$\phi:\mathbb R^n\to\mathbb R$ ($f:\mathbb R^n\to\mathbb R^n$) such that 
	$A(u)=D^2\phi(u)$ ($A(u)=Df(u)$). For emphasis we sometimes use a subscript to indicate the 
	coordinates in which Hessian or Jacobian are computed:
	$A=D^2_u\phi$  ($A=D_uf$). 
	\item 
	An ``inner product" is a  symmetric, but not necessarily positive definite, $2$-tensor.
	We also refer to symmetric $n\times n$-matrices as inner products on $\RR^n$.
	\item Summation convention is {\em not} used. 
	\item We write $\eps(i,j,k)=1$ to mean $i\neq j\neq k\neq i$. 
	\item For convenience, by ``smooth" we mean $C^k$-smooth for some sufficiently large $k\geq 2$.
\end{itemize}

\subsection{Hyperbolic conservation laws and entropies}
Consider a system of $n$ conservation laws in one spatial dimension
\beq\label{cons} 
	u_t+f(u)_x = 0\,,\qquad t,\, x\in \RR\,,
\eeq
where the unknown $u=u(t,x)=(u^1(t,x),\dots,u^n(t,x))^T\in\RR^n$ is the column  vector of 
conserved quantities. The map $f:\Omega\subset\RR^n\to\RR^n$ is referred to as the 
flux function. We consider the case of {\em hyperbolic} systems for which the Jacobian
$Df(u)$ is diagonalizable over $\RR$ and with a basis of eigenvectors at each $u\in\Omega$.
Many 1-dimensional systems \eq{cons} are derived 
from multi-dimensional models in continuum mechanics by assuming that $u(t,\cdot)$ 
varies along only one fixed spatial direction, \cite{daf}. The example par excellence is 
the 1-d compressible Euler equations that model uni-directional inviscid gas flow, \cites{daf, gr, sm}

It is well-known that solutions of the initial value problem for \eq{cons}
generically develop discontinuities in finite time \cites{daf}. 
In the context of the Euler system this provides a model for gas-dynamical shocks, i.e.\ narrow 
transition regions where the flow suffers steep gradients. To progress the solution 
beyond shock formation it is necessary to admit weak (distributional) solutions. However,
extending the solution space to include discontinuous functions introduces non-uniqueness:
solutions are not unique within the full class of all weak solutions. A central issue
is to regain uniqueness by imposing appropriate selection criteria. 

Motivated by physical models it makes good sense to consider
\eq{cons} as an idealization which discards higher order dissipative terms.
One approach is to admit a solution $u(t,x)$ of \eq{cons} only if it can 
be realized as a vanishing viscosity limit of solutions $u^\varepsilon(t,x)$ to
\beq\label{visc}
	u^\varepsilon_t+f(u^\varepsilon)_x=\varepsilon u_{xx}^\varepsilon\,,
	\qquad\text{as $\varepsilon\downarrow 0$.}
\eeq
However, the construction and convergence of solutions to \eq{visc} is a notoriously hard 
problem in itself \cite{bb}.
A more intrinsic approach, formalized by Kru{\v z}kov \cite{kru} and Lax \cite{lax1},
is to impose so-called {\em entropy inequalities}. These are derived from
\eq{visc} by considering the equation satisfied by $\eta(u^\varepsilon)$, where $\eta$
is a given scalar field intended to generalize the physical entropy in gas-dynamics. 
Since the classical entropy is a convex function of the state variables
(local thermodynamic equilibrium, \cite{gr}) we insist that the Hessian matrix $D^2\eta$ is positive semidefinite. 
Assuming for now that there is an associated entropy flux $q$, i.e.\ $\nabla q=\nabla \eta Df$, we obtain
from \eq {visc} that
\[\eta(u^\varepsilon)_t+q(u^\varepsilon)_x\leq \varepsilon\eta(u^\varepsilon)_{xx}\qquad\quad(\varepsilon>0)\,.\]
Assuming further that the viscous solutions $u^\varepsilon$ do converge in a sufficiently strong sense
to $u(t,x)$, we obtain an intrinsic selection criterion for \eq{cons}: a weak solution $u(t,x)$ of \eq{cons} 
is said to be {\em admissible} provided it satisfies the entropy inequality
\beq\label{entr_ineq}
	\eta(u)_t+q(u)_x\leq 0 \qquad\text{(distributional sense)}
\eeq
whenever $(\eta,q)$ is a convex entropy pair \cite{lax1}.
While the issue of admissibility criteria is far from settled for multi-dimensional 
problems (see \cites{daf, delsze, ell}), entropies are of central importance in the 
theory of hyperbolic conservation laws.

\subsection{Extensions and entropies for systems of conservation laws}
We start by making the following (not entirely standard) definition:
\begin{definition}\label{ext_entr}
	Let \eq{cons} have a smooth flux function $f:\Omega\to\RR^n$.
	A smooth scalar field $\eta:\Omega\to\RR$ is an 
	{\em extension} for \eq{cons} provided the map $u\mapsto \nabla \eta (u)Df(u)$ is the 
	$u$-gradient of a scalar field $q(u)$. If so, $q(u)$ is 
	the {\em flux} of the extension $\eta$. 
	An extension $\eta(u)$ is a (strict) {\em entropy} for \eq{cons} provided it is a (strictly) 
	convex function of the conserved quantities, i.e.\ the Hessian $D^2\eta(u)$ is (strictly) 
	positive semi-definite on $\Omega$.   
\end{definition}
\noindent
The following facts about extensions and entropies are well-known, \cites{boi,daf,ser1,lax1,lax2,mo,god,ha,gr,bgs,cl,ts1}:
\begin{enumerate}
	\item Any system \eq{cons} is equipped with {\em trivial extensions}, i.e.\ affine maps 
	\beq\label{triv_ext}
		\eta(u)=a\cdot u+b\,,\qquad \qquad a\in\RR^n,\, b\in\RR
	\eeq
	with corresponding fluxes $q(u)=a^Tf(u)+c$, $c\in\RR$. 
	\item For a single equation ($n=1$) any scalar field is an extension.
	\item Strictly hyperbolic rich systems (Section \ref{rich}), and in particular strictly hyperbolic 
	systems of two conservation laws ($n=2$), possess large (i.e.\ ``rich") families of extensions.  
	More precisely, given a strictly hyperbolic, rich system \eq{cons},
	a base point $\bar u\in \Omega$, and a choice of $n$ functions of one variable each; then there 
	is an extension $\eta$ that reduces to each of the given functions along each Riemann coordinate 
	curves through $\bar u$. See \cites{cl,ser2,ts1} and Theorem \ref{rich_unconstr} below.	
	\item	An extension for \eq{cons} with $n\geq 3$ must satisfy an overdetermined system of differential 
	constraints. Consequently one would expect that a ``randomly" chosen system \eq{cons} would not 
	possess many, if any, non-trivial extensions.  
	On the other hand, many systems appearing in physical applications {\em are} equipped 
	with extensions/entropies. The prime example is given by the Euler system for a medium 
	with a convex equation of state (see Example \ref{gas_dyn} below).
	\item (Friedrichs-Lax \cite{fl}) If \eq{cons} is equipped with a strict entropy $\eta$, then it is Friedrichs
	symmetrizable: pre-multiplication of \eq{cons} by $D^2\eta$ yields a quasi-linear system 
	with symmetric coefficient matrices. This implies energy estimates and the Cauchy problem for such 
	systems is well-posed in appropriate Sobolev spaces \cite{bgs}.
	\item (Mock \cite{mo}, Godunov \cite{god}) If \eq{cons} is equipped with a strict entropy 
	$\eta$, then the change of variables $u\mapsto v:=\nabla_u\eta$ transforms \eq{cons} 
	into a symmetric, conservative system in gradient form: 
	$\left[\nabla \phi(v)\right]_t+\left[\nabla  \psi(v)\right]_x=0$. 
	Conversely, if \eq{cons} is (``conservatively") symmetrizable in this sense, then it possesses
	a strict entropy. 
	\item The classes of extensions and entropies for the Euler system of compressible gas dynamics, 
	have been determined (for general equations of state). See Example \ref{gas_dyn} and \cites{hllm,rj,ha,ser1}.
	\item The entropy inequality \eq{entr_ineq} is related to other types of selection criteria such as the Lax-, Liu-, and 
	entropy-rate criterion; see \cite{daf} for an overview. 
	\item For strictly hyperbolic systems there is a general relationship 
	between interaction coefficients, eigenvalues and eigenvectors of the system; see \cite{sev} 
	and Observation \ref{obs_sev} below.
\end{enumerate}
The main goal of the present work is to gain a better understanding of ``how many"
extensions a system of conservation laws has. Except for the case of strictly hyperbolic rich systems, 
we are not aware of general results that provide detailed information about the size of the set of extensions.
In this article we provide a geometric approach to this problem and consider the lengths of eigen-vectors as 
measured with respect to the inner product determined by the extensions, rather than the extensions 
themselves, as the primary unknowns. This leads to a natural formulation in terms of flat connections and 
opens up for the application of standard integrability theorems. 

To give a precise formulation of the problem we first recall how the requirement of 
possessing an extension places restrictions on \eq{cons}. By definition \eq{cons} is equipped with 
an extension $\eta$ if and only if 
\beq\label{cond1}
	\del_i\big[\big(\nabla \eta Df\big)_j\big]=\del_j\big[\big(\nabla \eta Df\big)_i\big]\qquad\forall i<j
	\qquad\qquad(\del_i= \frac{\del}{\del u^i})\,.
\eeq
Performing the differentiations we get the equivalent conditions that
\beq\label{cond2}
	\del_i\big(\nabla\eta \big)\cdot \del_j f=\del_j\big(\nabla\eta \big)\cdot \del_i f\qquad\forall i<j\,.
\eeq
Thus, an extension must satisfy $\frac{n(n-1)}{2}$ second order (linear, variable-coefficients) differential equations. 
In general these cannot be satisfied in a nontrivial manner (i.e.\ by a nonlinear field $\eta$) unless $n=1$ or $n=2$.

We assume that the system \eq{cons}  is hyperbolic with a basis $\{R_i(u)\}_{i=1}^n$ 
of right eigenvectors of $Df$ with corresponding eigenvalues $\{\lam^i(u)\}_{i=1}^n$. 
The requirements \eq{cond2} are then equivalent to the requirement that the matrix $D^2\eta\,Df$ is symmetric and therefore,
\[\big(R_i^TD^2\eta\big)\big(Df R_j\big)=\big(R_j^TD^2\eta\big)\big(Df R_i\big)\qquad\forall i<j\,,\]
Since $Df R_i=\lambda^i\, R_i$ for $i=1,\dots, n,$ we thus require 
\beq\label{cond4}
	\text{for each pair $1\leq i\neq j\leq n$, either \quad 
		$\lambda^j= \lambda^i$ \quad or \quad $R_i^T\big(D^2\eta\big) R_j = 0$}\,.
\eeq
Also, by hyperbolicity, convexity of $\eta$ is equivalent to 
\beq\label{cond5}
	R_i^T\big(D^2\eta\big) R_i\geq 0\qquad\forall i=1,\dots,n\,.
\eeq
Summing up we have:
\begin{proposition}\label{prelim}
	Given a hyperbolic system \eq{cons} such that $Df$ has right eigenvectors $\{R_i(u)\}_{i=1}^n$ 
	and eigenvalues $\{\lam^i(u)\}_{i=1}^n$. Then $\eta$ is an extension for \eq{cons} if and 
	only if \eq{cond4} holds, and it is an entropy if and only if \eq{cond4} and \eq{cond5} hold.
	If \eq{cons} is {\em strictly} hyperbolic (i.e., $\lam^i(u)\neq\lam^j(u)$ 
	for all $u\in\Omega$), then $\eta$ is an extension for \eq{cons} if and only if $\{R_i(u)\}_{i=1}^n$ 
	is orthogonal with respect to the inner product $D^2\eta$. 
\end{proposition}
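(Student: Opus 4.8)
The plan is to chain together the equivalences already set up in the text in \eq{cond1}--\eq{cond5}, the only substantive new ingredient being an elementary linear-algebra observation that lets one test symmetry of $D^2\eta\,Df$, and definiteness of $D^2\eta$, against the eigen-basis $\{R_i\}$. First I would record, as in the paragraph preceding the statement, that $\eta$ is an extension iff the covector field $u\mapsto\nabla\eta(u)Df(u)$ is a $u$-gradient; since $\Omega$ is smoothly contractible the Poincar\'e lemma makes this equivalent to closedness, i.e.\ to \eq{cond1}; performing the differentiations and noting that the term $\nabla\eta\,\partial_i\partial_j f$ is automatically symmetric in $i,j$ (here $C^2$-smoothness of $f$ enters) collapses \eq{cond1} to \eq{cond2}; and reading $\partial_i(\nabla\eta)$ as the $i$-th row of $D^2\eta$ and $\partial_j f$ as the $j$-th column of $Df$ shows $\partial_i(\nabla\eta)\cdot\partial_j f=\big(D^2\eta\,Df\big)^i_j$, so \eq{cond2} is exactly the statement that $D^2\eta\,Df$ is symmetric.

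The one step that genuinely needs an argument is the passage from ``$D^2\eta\,Df$ symmetric'' to \eq{cond4}. Since $D^2\eta$ is symmetric, $D^2\eta\,Df$ is symmetric iff the matrix $A:=D^2\eta\,Df-(Df)^TD^2\eta$ vanishes, and $A$ is skew-symmetric; as $\{R_i(u)\}$ is a basis of $\RR^n$, one has $A=0$ iff $R_i^TAR_j=0$ for all $i,j$. Using $Df\,R_i=\lambda^iR_i$ one finds $R_i^TAR_j=(\lambda^j-\lambda^i)\,R_i^T(D^2\eta)R_j$, so $A=0$ says precisely that for every pair $i\neq j$ either $\lambda^i=\lambda^j$ or $R_i^T(D^2\eta)R_j=0$ --- that is, \eq{cond4}. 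This settles the extension statement, and the strictly hyperbolic case drops out immediately: when all $\lambda^i(u)$ are distinct the first alternative never occurs, so \eq{cond4} reduces to $R_i^T(D^2\eta)R_j=0$ for all $i\neq j$, i.e.\ to orthogonality of $\{R_i(u)\}$ with respect to the inner product $D^2\eta$.

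For the entropy assertion I would use that, by Definition \ref{ext_entr}, $\eta$ is an entropy iff it is an extension and $D^2\eta\succeq0$. Letting $P$ be the (invertible, by hyperbolicity) matrix with columns $R_1,\dots,R_n$, the Gram matrix $G:=P^T(D^2\eta)P$, with entries $G^i_j=R_i^T(D^2\eta)R_j$, is congruent to $D^2\eta$, hence $D^2\eta\succeq0$ iff $G\succeq0$; and once \eq{cond4} holds, $G$ is block-diagonal along the groups of coinciding eigenvalues, so $G\succeq0$ iff each diagonal block is positive semi-definite, which --- tested against eigenvectors --- is \eq{cond5}. The delicate point, and the one I expect to be the main obstacle, is exactly this last equivalence when eigenvalues coincide: a block with nonnegative diagonal entries need not be positive semi-definite, so one must use that an extension forces the block structure on $G$ together with the freedom in the choice of eigenvectors inside each eigenspace (in the strictly hyperbolic case the blocks are $1\times1$ and the difficulty evaporates). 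Everything else is bookkeeping.
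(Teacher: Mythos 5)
Your proposal is correct and follows essentially the same route as the paper: Proposition \ref{prelim} is proved there by the derivation \eq{cond1}--\eq{cond5} immediately preceding its statement (Poincar\'e lemma on the contractible domain, reduction to symmetry of $D^2\eta\,Df$, then testing against the eigenbasis), and your skew-symmetric matrix $A$ is just a clean packaging of the paper's computation $\big(R_i^TD^2\eta\big)\big(Df R_j\big)=\big(R_j^TD^2\eta\big)\big(Df R_i\big)$. The ``delicate point'' you flag in the entropy claim is real, but it is a looseness of the statement rather than a defect of your argument: the paper asserts that convexity is equivalent to \eq{cond5} ``by hyperbolicity'' without further comment, and this is literally correct only when the Gram matrix $R^T(D^2\eta)R$ is genuinely diagonal --- i.e.\ when \eq{cond4} holds through the orthogonality alternative for every pair, as it does in the strictly hyperbolic case; for a degenerate block one must read \eq{cond5} as holding for every eigenvector in the corresponding eigenspace (every such vector is an eigenvector, so this is the natural reading), after which your congruence argument closes the equivalence and nothing further is needed.
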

%


\subsection{Discussion and outline} 
It is clear from Proposition 
\ref{prelim} that  the eigen-frame $\mathfrak R:=\{R_1,\dots,R_n\}$ plays a central role in admitting 
or preventing non-trivial extensions for \eq{cons}. Our main goal is to analyze, {\em in terms 
of the frame $\mathfrak R$}, how large the class of extensions is. 
We will therefore prescribe the frame $\mathfrak R$, and then determine how many scalar fields $\eta$ 
have the property that $\mathfrak R$ is orthogonal with respect to $D^2\eta$. As detailed below,
this problem leads to an over-determined algebraic-differential system, which we call the ``$\beta$-system."

\begin{remark}
	The $\beta$-system seems to be derived for the first time by Conlon and Liu (\cite{cl}, Section 2). 
	These authors work in the setting of a given, strictly hyperbolic system, and record the fact
	that systems with a coordinate system of Riemann invariants have rich families of extensions
	and entropies. The latter fact was observed independently by Tsarev \cite{ts1}; see also \cite{ser2} 
	and Theorem \ref{rich_unconstr} below. We are not aware of further results on how many 
	extensions there are in the absence of a coordinate system of Riemann invariants.
\end{remark}

The unknowns in the $\beta$-system are the lengths of the frame-vectors $R_i$ as measured with 
respect to the inner-product $D^2\eta$. The Hessian matrix $D^2\eta$ is determined from these lengths 
and the given frame $\R$. In turn, the actual extensions $\eta$ are determined from $ D^2\eta$ by solving  
a system of $\frac{n(n+1)}{2}$ linear, second order PDEs, 
which amounts to successive integration of $n(n+1)$ first order linear ODEs (see Remark~\ref{eta_reconstr}).

Our primary concern in the present work is to analyze the size of the solution set of the $\beta$-system. 
(The issue of existence of {\em entropies}, i.e.\ whether the $\beta$-system admits solutions with 
all $\beta^i>0$, will be pursued elsewhere.)
After introducing an appropriate geometric setup we can employ standard integrability theorems 
(Frobenius, Darboux, Cartan-K\"ahler) to analyze the size of the solution set. The answer will specify
how many free constants and functions appear in a general solution, and thus provides an answer to
how ``rich" the class of extensions is. 
This geometric approach appears more convenient than a direct analysis of the second order system 
\eq{cond2} for the extensions $\eta$ themselves.

Of course, having prescribed the frame $\mathfrak R$, an immediate issue is whether there 
{\em are} systems \eq{cons} with $\mathfrak R$ as their eigen-frame. This question was studied 
by the authors in \cite{jk1} where it was formulated in terms of another over-determined
algebraic-differential system, the ``$\lambda$-system", whose unknowns are the eigenvalues-to-be in 
a corresponding system \eq{cons}. It is a non-obvious fact that there are frames $\mathfrak R$ with the 
property that the only systems \eq{cons} with eigen-frame $\mathfrak R$, are trivial systems
of the form $u_t+\bar\lam u_x=0$, $\bar\lam\in\RR$. See Examples \ref{rich-rank2} and \ref{ex_jk1_5.4}.

It is important to note that a given frame $\R$ can give rise to  a family of systems \eq{cons} 
which may include both strictly-hyperbolic and non strictly-hyperbolic systems.
From \eq{cond4} we see that for a strictly hyperbolic system of conservation laws with eigenframe $\R$, 
the solutions of  the $\bet$-system give rise to all possible extensions. On the other hand, for a non-strictly 
hyperbolic system \eq{cons} the solutions of the $\bet$-system may produce only a proper subset of all extensions,
since the $\bet$-system does not account for extensions that are due to coalescing eigenvalues. (See Example 
\ref{strict_hyp_not_all}  and Remark \ref{strict-hyp} for more details.)

The paper is organized as follows. 
In Section~\ref{problem}, we formulate the problem of finding extensions $\eta$ such that a given frame 
is orthogonal with repsect to the inner product $D^2\eta$ (see the second part of \eq{cond4}). 
This leads to the $\beta$-system in Section~\ref{beta_prob}.
In Section~\ref{lambda_prob}, we review briefly the problem of constructing conservative systems \eq{cons} 
with a prescribed eigen-frame (analyzed in \cite{jk1}), and record the corresponding $\lambda$-system. 
Section \ref{relatns} details the relationships between the two systems. Not surprisingly there are 
close connections, but also important differences, between the $\lam$- and the $\bet$-systems.  

In Section~\ref{geom-prelim} we  review relevant facts about connections on frame bundles and 
provide coordinate-free formulations of the $\lam$- and the $\bet$-systems.
This general framework reveals the geometric structure that underlies the problem.
This geometric structure immediately provides us with important identities on the coefficients of  
the $\lam$- and the $\bet$-systems that play important part in deriving compatibility conditions 
for these systems. It also readily shows how the systems  behave under changes of coordinates. 
This is useful for example when formulating the $\beta$-system for rich systems in the associated 
Riemann coordinates (see Section \ref{rich}).

It turns out that the analysis of the $\bet$-system is 
more involved than for the $\lambda$-system. 
We first consider the case when $\beta(\R)$ contains no algebraic part (Section \ref{no-alg}). 
The corresponding systems \eq{cons} are then necessarily rich. (This covers all $2\times 2$-systems
as well as rich systems with orthogonal eigen-frames.)
We then treat systems of 
three equations (Sections \ref{n=3} and \ref{proof}), and give a complete analysis for both rich 
and non-rich systems through a breakdown 
similar to what was done for the $\lambda$-system in \cite{jk1}.
Finally, in Section \ref{exs},
we consider several examples that illustrate our approach and results. 
In particular, we treat the case when the frame $\mathfrak R$ is that of the Euler system. 

We have developed 
{\sc Maple} code\footnote{Posted on \url{http://www.math.ncsu.edu/~iakogan/symbolic/geometry_of_conservation_laws.html}}
 to calculate solutions of the $\lambda$- and $\beta$-systems.

\section{Problem formulation; $\beta$-system and $\lambda$-system}\label{formulation1}
\subsection{Problem formulation}\label{problem}
By definition, a {\em frame} $\R$ on $\Omega$ is a collection of smooth vector fields $\{r_1,\dots,r_n\}$ which are linearly 
independent at each point of $\Omega$. Its dual \emph{co-frame} is $\mathfrak L:=\{\ell^1,\dots, \ell^n\}$ where the differential 
1-forms $\ell^i$ satisfy $\ell^i(r_j)=\delta^i_j$.

Let  $u^1,\dots, u^n$ be a fixed coordinate system on $\Omega$. By the {\em $u$-representation} of $\mathfrak R$ we mean the collection $\{R_1(u),\dots,R_n(u)\}$
of column vectors $R_i=[R_i^1,\dots,R_i^n]^T$ given by
\[ r_i\big|_u=\sum_{k=1}^nR_i^k(u)\pd{}{u^k}\Big|_u\qquad\qquad i=1,\dots,n\,.\]
We let $R(u),\, L(u)\in\RR^{n\times n}$ be given by
\beq\label{RL}
	R(u):=[R_1(u)\,|\,\cdots\,|\,R_n(u)]\qquad\mbox{and}\qquad L(u):=R^{-1}(u)
	=\left[\!\begin{array}{c} L^1(u)\\ \hline \vdots\\\hline L^n(u)\end{array}\!\right]\,.
\eeq
Then
\beq\label{1-frms}
	\ell^i\big|_u=\sum_{m=1}^n L_m^i(u)du^m\big|_u\,,
\eeq
and we define the $(n\times 1)$-vector of 1-forms $\ell$ by
\beq\label{ell}
	\ell:=\left[\begin{array}{c}
	\ell^1\\
	\vdots\\
	\ell^n
	\end{array}\right]=Ldu\,.
\eeq
We refer to $L(u)$, or $\{L^1(u),\dots,L^n(u)\}$, as the {\em $u$-representation} of the 
dual co-frame $\mathfrak L$.

Given $\R$ we  would like to find all scalar fields $\eta(u)$ that satisfy the second condition in \eq{cond4}:
\beq\label{orth-cond}
	R_i(u)^T\big(D^2_u\eta(u)\big) R_j(u) = 0 \quad\text{for each pair $1\leq i\neq j\leq n$.}
\eeq
If $\R$ is an eigen-frame for the flux $f$ of a conservative system \eq{cons},
then such an $\eta$ is an extension of \eq{cons}.  

 \begin{problem}\label{prob1-geom}
	Let $\mathfrak R$ be a given frame on $\Omega$. Find all scalar fields $\eta(u)$ 
	defined on a  neighborhood of a point $\bar u\in\Omega$, such that the frame $\R$ is orthogonal 
	with respect to the inner product defined by the Hessian matrix $D^2_u\eta$.
\end{problem}	
\begin{remark}
Note that if $D^2_u\eta$ is strictly positive definite, then it defines  a  positive definite (Riemannian) metric on an 
open subset of $\RR^n$, that can be locally expressed 
as the Hessian of a smooth function. Such metrics are called {\em Hessian metrics} (see Remark~\ref{bet-free} 
for a coordinate free  definition of a Hessian metric).  
They were introduced as real analogs of K\"ahlerian metrics on complex manifolds and 
have been extensively studied, see \cite{hirohiko} and references therein. 
The problem stated above, however, does not seem to appear in the literature.
\end{remark}
We proceed to show that  an extension $\eta$ is completely determined by the values 
\beq\label{bet_def}
	\bet^i(u):=R_i(u)^T\big(D^2_u\eta(u)\big) R_i(u) \qquad i=1,\dots, n,
\eeq
and reformulate our problem accordingly.
From \eq{orth-cond} and \eq{bet_def} we have that
$$R^T\big(D^2_u\eta\big) R=\diag[\beta^1,\dots,\beta^n]\,.$$
Recalling \eq{RL}, we thus obtain an equivalent condition that
\beq
\label{cond-eta}
	D^2_u\eta=L^T\,\diag[\beta^1,\dots,\beta^n] \,L\,.
\eeq
We now recall two well-known consequences of Poincar\'e's Lemma \cite{sp1}. 
The first result will be used immediately, while the second is used in the following sections.
\begin{proposition}\label{symm}
	A $u$-Jacobian on $\Omega$ is symmetric if and only if it is a $u$-Hessian.
\end{proposition}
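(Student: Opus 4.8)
The plan is to prove the two implications separately, in both cases exploiting the topological hypothesis that $\Omega$ is smoothly contractible to a point, which is what allows Poincar\'e's Lemma to apply globally on $\Omega$ rather than merely locally. First, suppose $A(u)=D^2\phi(u)$ is a $u$-Hessian. Then $A^i_j=\frac{\del^2\phi}{\del u^i\del u^j}$, and since $\phi$ is $C^k$ with $k\geq 2$ the mixed partials commute (Clairaut/Schwarz), giving $A^i_j=A^j_i$ immediately. This direction needs no topology and no Poincar\'e Lemma; it is a one-line observation that I would state and dispatch first.

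For the converse — the substantive direction — suppose $A(u)=Df(u)$ is a symmetric $u$-Jacobian, so $A^i_j=\frac{\del f^i}{\del u^j}=\frac{\del f^j}{\del u^i}=A^j_i$ for all $i,j$. The goal is to produce a scalar $\phi:\Omega\to\RR$ with $\nabla\phi=f^T$, i.e.\ $\frac{\del\phi}{\del u^i}=f^i$ for each $i$; such a $\phi$ then has $D^2\phi=Df=A$. I would introduce the $1$-form $\omega:=\sum_{i=1}^n f^i\,du^i$ on $\Omega$. Its exterior derivative is $d\omega=\sum_{i<j}\big(\frac{\del f^j}{\del u^i}-\frac{\del f^i}{\del u^j}\big)\,du^i\wedge du^j$, which vanishes precisely because $A$ is symmetric. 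Thus $\omega$ is a closed $1$-form on $\Omega$. Since $\Omega$ is contractible (hence its first de Rham cohomology vanishes), Poincar\'e's Lemma gives a smooth function $\phi$ with $d\phi=\omega$, i.e.\ $\frac{\del\phi}{\del u^i}=f^i$ for all $i$. Differentiating once more yields $D^2\phi(u)=Df(u)=A(u)$, so $A$ is a $u$-Hessian, as desired.

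The only point requiring a little care — and the closest thing to an obstacle — is making sure the regularity bookkeeping is consistent: one wants $\phi$ to be $C^{k+1}$ so that $D^2\phi$ makes sense and equals the given $C^{k-1}$ (or better) matrix $A$; this follows because the antiderivative $\phi$ produced by Poincar\'e's Lemma gains one degree of smoothness over the coefficients $f^i$ of $\omega$. Beyond that, the proof is essentially the classical statement that on a contractible (or simply connected) domain a curl-free vector field is a gradient, repackaged in the paper's matrix notation; no genuine difficulty arises. I would keep the write-up to a few lines, citing the already-invoked Poincar\'e Lemma \cite{sp1} for the existence of the potential $\phi$ and Schwarz's theorem for the forward direction.
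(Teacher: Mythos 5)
Your proof is correct and is exactly the argument the paper has in mind: the paper states Proposition~\ref{symm} without proof, presenting it as a ``well-known consequence of Poincar\'e's Lemma'' with a citation to \cite{sp1}, and your write-up (Clairaut for the easy direction, closedness of $\omega=\sum_i f^i\,du^i$ plus Poincar\'e on the contractible domain $\Omega$ for the converse) is precisely that standard argument. No discrepancies to report.
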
 
\begin{proposition}\label{jac_cond} 
	An $n\times n$-matrix $A(u)$ defined on  $\Omega$ is a $u$-Jacobian if and only if 
	\beq\label{jac}
		dA(u)\wedge du=0\,.
	\eeq
\end{proposition}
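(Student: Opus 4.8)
The plan is to unwind the matrix-valued notation so that the statement reduces to $n$ independent applications of the (scalar) Poincar\'e Lemma, one for each row of $A$.

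First I would spell out the meaning of $dA\wedge du=0$. With $dA$ the matrix whose $(i,j)$-entry is the $1$-form $dA^i_j=\sum_{k=1}^n(\del_k A^i_j)\,du^k$, and $du=[du^1,\dots,du^n]^T$ the column of differentials appearing in \eqref{ell}, the product $dA\wedge du$ is the $(n\times 1)$-column of $2$-forms whose $i$-th entry is
\beq
	\sum_{j=1}^n dA^i_j\wedge du^j
	=\sum_{j,k=1}^n (\del_k A^i_j)\,du^k\wedge du^j
	=\sum_{1\le k<j\le n}\big(\del_k A^i_j-\del_j A^i_k\big)\,du^k\wedge du^j\,.
\eeq
Hence $dA\wedge du=0$ is equivalent to the relations $\del_k A^i_j=\del_j A^i_k$ for all $i,j,k$, and this in turn says exactly that each $1$-form $\omega^i:=\sum_{j=1}^n A^i_j\,du^j$ built from the $i$-th row of $A$ is closed, $d\omega^i=0$.

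Next, because $\Omega$ is assumed smoothly contractible to a point, the Poincar\'e Lemma applies to each $\omega^i$: there is a smooth scalar field $f^i:\Omega\to\RR$ with $\omega^i=df^i$, i.e.\ $A^i_j=\frac{\del f^i}{\del u^j}$ for all $j$. (The potential $f^i$ is as smooth as the hypothesis on $A$ demands, so the convention ``$C^k$ for large $k$'' is preserved.) Assembling $f:=[f^1,\dots,f^n]^T:\Omega\to\RR^n$ then gives $A=D_uf$, so $A$ is a $u$-Jacobian. Conversely, if $A=D_uf$ for some smooth $f$, then $\omega^i=df^i$ is exact, hence closed, so $dA\wedge du=0$; this implication uses no hypothesis on $\Omega$.

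I do not anticipate a genuine obstacle here — the content is classical, and Proposition~\ref{jac_cond} is the row-by-row, vector-valued companion of Proposition~\ref{symm}, both being immediate consequences of Poincar\'e's Lemma. The only points requiring care are the index and sign bookkeeping in the wedge-product computation above, and being explicit that it is precisely the contractibility of $\Omega$ that upgrades ``closed'' to ``exact'' in the nontrivial direction.
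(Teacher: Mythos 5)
Your proof is correct and is exactly the argument the paper has in mind: the paper states Proposition~\ref{jac_cond} without proof as a ``well-known consequence of Poincar\'e's Lemma,'' and your row-by-row unwinding of $dA\wedge du=0$ into the closedness of the $1$-forms $\omega^i=\sum_j A^i_j\,du^j$, followed by exactness on the contractible domain $\Omega$, is precisely that consequence spelled out.
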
 
Combining \eq{cond-eta} with Proposition~\ref{symm} we conclude that $\eta$ is an extension for 
a conservative system with eigen-frame $\R$ provided
\beq\label{cond6}
	L^T\diag[\beta^1,\dots,\beta^n]L \qquad \mbox{is a $u$-Jacobian.}
\eeq
Considering the frame $\R$ as given, we view \eq{cond6} as a condition on $\beta^1,\dots,\bet^n$, 
and obtain the following reformulation of Problem~\ref{prob1-geom}. 

\begin{problem}\label{prob1-entropy}
	Let $\mathfrak R$ be a given frame defined near $\bar u\in \Omega$, and let 
	the matrix $L(u)$ be the $u$-representation of the dual co-frame $\mathfrak L$, as in \eq{RL}. 
	Find $n$ scalar fields $\bet^1(u),\dots,\bet^n(u)$ defined on a neighborhood 
	$\mathcal U\subset \Omega$ of $\bar u$ such that with 
	\beq\label{B}
		\Beta:=\diag[\beta^1,\dots,\beta^n]\,,
	\eeq
	the symmetric matrix
	\beq\label{A}
		L^T(u)\Beta(u)L(u)\qquad
		\text{is the $u$-Jacobian of some map $\Psi:\mathcal U \to\mathbb R^n$.} 
		\eeq
	We are further interested in how large the set of solutions is, i.e.\ how many arbitrary 
	constants and functions appear in a general solution $\bet^1(u),\dots,\bet^n(u)$.
\end{problem}
In Section~\ref{beta_prob} we derive a system of differential and algebraic equations that is 
equivalent to condition \eq{A}. The following remark outlines how one can recover the solution 
of Problem~\ref{prob1-geom} from the solution of Problem~\ref{prob1-entropy}.
\begin{remark} \label{eta_reconstr} Condition \eq{cond-eta} provides the link between 
Problem~\ref{prob1-geom} and Problem~\ref{prob1-entropy}.   
The right-hand side of  \eq{cond-eta} can be computed from a given frame 
$\R$ and $\beta^1,\dots,\bet^n$. By symmetry of the matrices involved, 
\eq{cond-eta} provides $\frac {n(n-1)} 2$ 
linear second order PDEs for the scalar field $\eta$. 

Vice versa, given the solution of Problem~\ref{prob1-entropy}, we can recover $\eta$ by 
successively solving $n^2+n$ ODEs.
Indeed, $L^T(u)\Beta(u)L(u)$ is the Jacobian of a map 
$\Psi=(\Psi^1,\dots, \Psi^n):\mathcal U \to\mathbb R^n$.
This means that the $i$-th row of $L^T(u)\Beta(u)L(u)$ is the gradient of the function 
$\Psi^i$, $i=1,\dots, n$. Recovering a function of $n$ variables from its gradient 
requires successive integration of $n$ ODEs. Therefore, we can recover
$\Psi=(\Psi^1,\dots, \Psi^n)$ in \eq{A} by solving $n^2$ ODEs. Also, by Proposition \ref{symm} we have that 
$$ (\Psi^1,\dots, \Psi^n)=\nabla\eta\qquad\text{for some scalar field $\eta$.}$$
From this we recover $\eta$ by successively solving $n$ ODEs.
Thus, all in all we can recover $\eta$ from  $L^T(u)\Beta(u)L(u)$ by integrating
of $n^2+n$ ODEs. Finally, a function is  recovered from its gradient uniquely up to an 
additive constant. Therefore, the above steps allow us to uniquely recover $\eta$ from 
$\beta^1,\dots,\beta^n$, up to addition of a trivial extension \eq{triv_ext}. Examples are given  
in Section \ref{exs}.

\end{remark}

\subsection{The $\beta$-system}\label{beta_prob}
 Applying Proposition \ref{jac_cond}, 
we conclude that $L^T\Beta L$ is a Jacobian (condition \eq{A}) if and only if  
 $\beta=(\beta^1,\dots,\beta^n)$ satisfies  the following {\em $\beta$-system}:
\beq\label{m_beta}
	\beta(\mathfrak R)\,:\qquad\qquad  d[L^T\Beta L]\wedge du = 0
	\qquad \qquad\text{($\Beta$ given by \eq{B}).}
\eeq
We proceed to rewrite \eq{m_beta} as an algebraic-differential system.
Define 
\beq\label{christoffel}
	\Gamma_{ij}^k:=L^k(D R_j)R_i \qquad\mbox{and}\qquad c_{ij}^k:=\Gamma_{ij}^k-\Gamma_{ji}^k\,
\eeq 
(see Section~\ref{geom-prelim} for  the geometric meaning of these functions).  
By applying the product rule to \eq{m_beta}, multiplying by $R^T$ from the 
left, and using that $(dL)R=-L(dR)$ we obtain that \eq{m_beta} is equivalent to
\beq \label{m-beta1}
	d\Beta \wedge Ldu=\left[\Beta L(dR)+\big(\Beta L(dR)\big)^T\right]\wedge L\,du\,,
\eeq 
which may be re-written as 
\beq \label{m-beta2}
	d\Beta\wedge \ell=\left[\Beta \mu+\big(\Beta \mu\big)^T\right]\wedge \ell,  
\eeq
where $\ell$ is given by \eq{ell} and the matrix $\mu$ is given by
\[\mu^k_j:=\big(LdR\big)_j^k=\sum_{i=1}^n \Gamma_{ij}^k \ell^i\,.\]
Applying \eq{m-beta2} to pairs of frame vectors $(r_i,r_j)$ we obtain an
explicit formulation of $\beta(\R)$:
\begin{eqnarray}
	r_i(\bet^j) = \bet^j\,(\Gamma_{ij}^j +c_{ij}^j)-\beta^i\, \Gamma_{jj}^i
	&\qquad& \mbox{for $i\neq j$,}\label{beta1}\\
	\bet^k\,c_{ij}^k +\beta^j \Gamma_{ik}^j-\beta^i\,\Gamma^i_{jk}=0
	&\qquad& \mbox{for $i<j$, $\eps(i,j,k)=1$,}\label{beta2}
\end{eqnarray}
where there are no summations. Note that \eq{beta1} gives $n(n-1)$ linear, homogeneous
PDEs, while \eq{beta2} gives $\frac{n(n-1)(n-2)}{2}$ algebraic relations. We observe that 
the left-hand side of \eq{beta2}  is skew-symmetric in $i$ and $j$, and that all coefficients 
$\Gamma_{ij}^k$ with $\eps(i,j,k)=1$ appear in \eq{beta2}.
We proceed with some simple but important properties of the $\beta$-system.
\begin{observation}(Trivial solutions)\label{trivial}
	We observe that $\beta(\mathfrak R)$ always has the trivial solution 
	$\beta^1=\cdots=\beta^n=0$, which corresponds to the trivial, affine 
	extensions in \eq{triv_ext}.
\end{observation}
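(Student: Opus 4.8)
The plan is to verify directly that $\beta\equiv 0$ satisfies the $\beta$-system, and then to trace through the reconstruction of $\eta$ to identify the corresponding extension. For the first part, substituting $\beta^1=\cdots=\beta^n=0$ gives $\Beta\equiv 0$ by \eqref{B}, hence $L^T\Beta L\equiv 0$. The exterior derivative of the identically-zero matrix-valued function vanishes, so $d[L^T\Beta L]\wedge du=0$ holds trivially and \eqref{m_beta} is satisfied. Alternatively one checks the explicit form: with every $\beta^i=0$ the right-hand side of \eqref{beta1} vanishes while its left-hand side is $r_i(0)=0$, and every term in \eqref{beta2} carries a factor $\beta^k$, $\beta^j$, or $\beta^i$ and so vanishes identically. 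Either way, $\beta\equiv 0$ is a solution of $\beta(\mathfrak R)$.

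For the correspondence with the affine extensions \eqref{triv_ext}, recall from \eqref{cond-eta} that any $\eta$ associated to a solution $\beta$ of $\beta(\mathfrak R)$ satisfies $D^2_u\eta=L^T\,\diag[\beta^1,\dots,\beta^n]\,L$. With $\beta\equiv 0$ this forces $D^2_u\eta\equiv 0$ on a neighborhood of $\bar u$, and since $\Omega$ is connected (indeed smoothly contractible to a point by our standing assumptions), $\eta$ is an affine map $\eta(u)=a\cdot u+b$ with $a\in\RR^n$, $b\in\RR$, which is exactly \eqref{triv_ext}. Conversely, for such an affine $\eta$ the Hessian vanishes, so by \eqref{bet_def} each $\beta^i=R_i^T(D^2_u\eta)R_i=0$; hence the affine extensions correspond precisely to the trivial solution of the $\beta$-system.

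There is no substantive obstacle here; the statement is essentially a consistency check on the derivation of $\beta(\mathfrak R)$. The only point requiring a word of care is the passage from $D^2_u\eta\equiv 0$ to $\eta$ affine, which uses connectedness of the domain — guaranteed by the hypothesis that $\Omega$ is smoothly contractible — and, for the uniqueness-up-to-trivial-part claim implicit in the identification, the reconstruction procedure of Remark~\ref{eta_reconstr}.
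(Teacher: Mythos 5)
Your verification is correct and is exactly the routine check the paper intends; the paper states this as an Observation without proof precisely because substituting $\beta\equiv 0$ trivially satisfies both \eqref{m_beta} and the explicit equations \eqref{beta1}--\eqref{beta2}, and \eqref{cond-eta} then forces $D^2_u\eta\equiv 0$, i.e.\ $\eta$ affine. Your added care about connectedness of $\Omega$ in passing from a vanishing Hessian to an affine $\eta$ is a reasonable (if minor) point that the paper leaves implicit.
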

\begin{definition}\label{non-deg}
	A solution $\beta=(\beta^1,\dots,\beta^n)$ of the $\beta$-system \eq{m_beta} 
	is called {\em non-degenerate} in $\Omega$ provided $\beta^i(u)\neq 0$ for all $i=1,\dots,n$, 
	$u\in\Omega$. 
\end{definition}
\begin{observation}(The effect of scaling)\label{scaling}
	Given smooth functions
	$\alpha^j:\Omega\to \mathbb R\setminus \{0\}$, $1\leq j\leq n$, we scale 
	the given frame and its dual frame according to:
	 \[\ti R_j(u)= \alpha^j(u)R_j(u)\qquad\text{and}\qquad \ti L^j(u):= {\alpha^j(u)}^{-1}L^j(u)\,.\] 
	Letting $\ti R:=R\alpha$, $\ti L:=\alpha^{-1}L$, 
	where $\alpha(u)=\diag[\alpha^1(u)\dots\alpha^n(u)]$, we may consider the $\beta$-system
	$\beta(\ti{\mathfrak R})$. Clearly, 
	\[L^T(u)\Beta(u) L(u) \quad\text{is a Jacobian if and 
	only if}\quad [\alpha(u)\ti L(u)]^T\Beta(u)\alpha(u) \ti L(u)\quad\text{is a Jacobian.}\] 
	Therefore, $\beta=(\beta^1,\dots,\beta^n)$ solves $\beta(\mathfrak R)$
	if and only if $\ti \beta=((\alpha^1)^2\beta^1,\dots,(\alpha^n)^2\beta^n)$ solves 
	$\beta(\ti{\mathfrak R})$. 
	However, while the solution set of the $\beta$-system is affected by scalings in this way, 
	the two systems $\beta(\mathfrak R)$ and $\beta(\ti{\mathfrak R})$ generate the 
	same class of extensions. This is clear from \eq{cond-eta} since $\ti L:=\alpha^{-1}L$.
\end{observation}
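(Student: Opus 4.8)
The plan is to reduce both assertions — that solutions of $\beta(\mathfrak R)$ and $\beta(\widetilde{\mathfrak R})$ correspond under $\widetilde\beta^i=(\alpha^i)^2\beta^i$, and that the two systems generate the same extensions — to a single matrix identity. First I would record how the $u$-representation of the dual co-frame transforms under the scaling: with $\alpha=\diag[\alpha^1,\dots,\alpha^n]$ invertible (each $\alpha^i$ is nowhere zero, so $\widetilde{\mathfrak R}$ is again a frame) and $\widetilde R=R\alpha$, one has $\widetilde L:=\widetilde R^{-1}=\alpha^{-1}R^{-1}=\alpha^{-1}L$, which is exactly the row-wise statement $\widetilde L^j=(\alpha^j)^{-1}L^j$. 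The displayed chain ``$L^T\Beta L$ is a $u$-Jacobian iff $[\alpha\widetilde L]^T\Beta\,\alpha\widetilde L$ is a $u$-Jacobian'' is then the tautology $\alpha\widetilde L=L$ and carries no content by itself; the real point is the next computation.

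Next, writing $\widetilde\Beta:=\diag[\widetilde\beta^1,\dots,\widetilde\beta^n]$ for the scaled vector, I would compute, using $\alpha^T=\alpha$ and that products of diagonal matrices are diagonal,
\[
\widetilde L^T\widetilde\Beta\,\widetilde L=(\alpha^{-1}L)^T\widetilde\Beta\,(\alpha^{-1}L)=L^T\big(\alpha^{-1}\widetilde\Beta\,\alpha^{-1}\big)L=L^T\diag\big[(\alpha^1)^{-2}\widetilde\beta^1,\dots,(\alpha^n)^{-2}\widetilde\beta^n\big]L.
\]
Under the substitution $\widetilde\beta^i=(\alpha^i)^2\beta^i$ the right-hand side is literally $L^T\Beta L$. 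Since condition \eq{A} asks only whether a given symmetric matrix is a $u$-Jacobian — a property of the matrix itself, characterized by Proposition~\ref{jac_cond} — and since the two matrices $\widetilde L^T\widetilde\Beta\widetilde L$ and $L^T\Beta L$ coincide under this correspondence, the defining condition \eq{m_beta} of $\beta(\widetilde{\mathfrak R})$ holds for $\widetilde\beta$ exactly when that of $\beta(\mathfrak R)$ holds for $\beta$. Because each $\alpha^i$ is nowhere vanishing, $\beta\mapsto\widetilde\beta$ is a bijection of solution sets, so in particular the count of free constants and functions in the general solution is unaffected.

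Finally, the assertion about extensions follows from \eq{cond-eta}: an extension $\eta$ arising from $(\mathfrak R,\beta)$ has $D^2_u\eta=L^T\Beta L$, and one arising from $(\widetilde{\mathfrak R},\widetilde\beta)$ has $D^2_u\eta=\widetilde L^T\widetilde\Beta\widetilde L$; by the identity above these Hessians coincide, so the two systems produce precisely the same family of Hessians, hence — recovering $\eta$ from $D^2_u\eta$ uniquely up to a trivial affine term, as in Remark~\ref{eta_reconstr} — the same class of extensions. I do not anticipate any real obstacle: the whole argument is the bookkeeping identity $\widetilde L=\alpha^{-1}L$ together with the observation that being a $u$-Jacobian is a property of the matrix $L^T\Beta L$ alone, independent of the frame through which it is factored. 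The one point deserving a word of care is exactly the non-vanishing of the $\alpha^i$, which is what makes the correspondence on the $\beta$'s a genuine bijection rather than merely a map.
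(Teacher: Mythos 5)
Your proposal is correct and follows essentially the same route as the paper: the whole content is the identity $\widetilde L=\alpha^{-1}L$, which gives $\widetilde L^T\widetilde\Beta\,\widetilde L=L^T\Beta L$ under $\widetilde\beta^i=(\alpha^i)^2\beta^i$, so the Jacobian condition \eqref{A} transfers between the two systems and, via \eqref{cond-eta}, the resulting Hessians (hence extensions, modulo affine terms) coincide. Your remark that non-vanishing of the $\alpha^i$ is what makes the correspondence a bijection is a correct and worthwhile point of care.
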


\subsection{The $\lambda$-system}\label{lambda_prob}
A solution of the $\bet(\R)$-system provides an extension for any conservative systems \eq{cons}
whose Jacobian $D_uf$ has eigen-frame $\R$.   
This raises a natural question: {\em What is the class of flux functions $f(u)$ with the property that 
the set of eigenvectors of $Df$ coincide with $\R$?} This  problem was treated by the authors in \cite{jk1}, and we 
briefly review some results from that paper. (A complete breakdown for the case $n=3$ is 
recorded in Proposition \ref{n=3lambda}.)

Given $n$ scalar fields $\lambda^1(u),\dots,\lambda^n(u):\Omega \to \RR$, we define 
the diagonal matrix  
\[\Lambda(u):=\diag[\lambda^1(u),\dots,\lambda^n(u)]\,.\]
The  {\em $\lambda$-system} $\lambda(\mathfrak R)$ associated with a frame $\R$  is  
the system of equations which encodes that the matrix $A(u):=R(u)\Lambda(u) L(u)$ is the 
Jacobian matrix of some map $f:\Omega\to\RR^n$. The set of such maps provide all 
possible flux functions $f$ in conservative systems \eq{cons} with $\mathfrak R$ as their 
eigen-frame. The unknowns of the $\lambda$-system are the eigenvalues 
$\lambda^1(u),\dots,\lambda^n(u)$ of the resulting Jacobian $D_uf$.
Proposition \ref{jac_cond} gives the following formulation of the $\lambda$-system:
\beq\label{m-lam}
		\nqquad\nqquad\lambda(\mathfrak R)\,:\qquad  d[R\Lambda L]\wedge du = 0\,.
\eeq
Evaluating \eq{m-lam} on pairs $(r_i,r_j)$ of frame vector fields we obtain the following
algebraic-differential system, whose coefficients are given by \eq{christoffel}:
\begin{eqnarray}
	r_i(\lam^j) &=& \Gamma_{ji}^j (\lam^i-\lam^j)
	\qquad \mbox{for $i\neq j$,}\label{sev1}\\
	(\lam^i-\lam^k)\Gamma_{ji}^k &=& 
	(\lam^j-\lam^k)\Gamma_{ij}^k
	\qquad \mbox{for $i<j$, $\eps(i,j,k)=1$,}\label{sev2}
\end{eqnarray}
where there are no summations. Note that \eq{sev1} gives $n(n-1)$ linear, homogeneous
PDEs, while \eq{sev2} gives $\frac{n(n-1)(n-2)}{2}$ linear algebraic relations, with coefficients
Concerning scaling of $\R$ (cf.\ Observation \ref{scaling}) we have that the solution set 
of the $\lambda$-system is unaffected by scaling. Indeed, since $\ti R\Lambda \ti L\equiv R\Lambda L$,
$\lam$ solves $\lambda(\ti{\mathfrak R})$ if and only if $\lam$ solves $\lambda(\mathfrak R)$.

As is clear from \eq{m-lam}, $\lam(\mathfrak R)$ always 
has a one-parameter family of {\em trivial solutions} given by 
$\Lambda(u)\equiv\bar\lambda I_{n\times n}$, $\bar\lambda\in\RR$, corresponding to diagonal 
affine fluxes 
\beq\label{trivial_flux} 
	f(u)=\bar \lambda u+\bar u.
\eeq 
There are frames for which the $\lam$-system has only trivial solutions, and there are frames  
for which the $\lam$-system has a large family of non-trivial solutions; see \cite{jk1}.

\begin{remark}\label{strict-hyp}
Note that even when a frame $\R$ admits strictly hyperbolic fluxes (i.e.~there is a solution of $\lam(\R)$ 
such that $\lam^1(u),\dots,\lam^n(u)$  are distinct $\forall u\in \Omega$), it also gives rise to  
non-strictly hyperbolic systems - in particular the trivial fluxes \eq{trivial_flux}.
The $\bet$-system \eq{m_beta} produces all extensions for strictly hyperbolic fluxes that correspond to $\R$, but, 
in general, it does not produce  all extensions for corresponding non-strictly hyperbolic fluxes. 
The extensions for fluxes with coalescing eigenvalues, which satisfy the first part of \eq{cond4}, 
are not necessarily covered by the $\bet$-system.  
\end{remark}

\subsection{Relation between the $\lam$- and the $\bet$-systems.}\label{relatns}
By restricting ourselves to searching for those extensions $\eta$ 
that satisfy \eq{cond4} due to orthogonality, we have separated the problem of finding fluxes that correspond to
the frame $\R$, from the problem of finding extensions that correspond to the same frame.  
Indeed,  no unkonwn of
$\beta(\mathfrak R)$ appears in $\lambda(\mathfrak R)$ or vice versa. The two systems can
therefore be solved independently of each other.

We note that both systems $\beta(\mathfrak R)$ and $\lam(\mathfrak R)$ encode the property 
that a matrix is a $u$-Jacobian. Also, the coefficients of both systems  are computed from the $u$-representation 
of the given frame $\R$. Nonetheless, we will show in Section~\ref{n=3} that there is in general no relationship
between the sizes of the solution sets of $\beta(\mathfrak R)$ and $\lam(\mathfrak R)$.
We observe that {\em orthogonal} frames provide a notable exception:
\begin{observation}(Orthonormal frames)\label{orthonormality}
	Comparing the $\beta$-system \eq{m_beta} and the $\lambda$-system \eq{m-lam} 
	we see that if $\mathfrak R$ is orthonormal relative to the standard
	inner product, then $R=L^T$ and the two systems coincide. By Proposition \ref{symm} 
	we deduce that the corresponding conservative systems \eq{cons} 
	are {\em gradient systems}: their flux function is the gradient of an extension,
	$f=(\nabla \eta)^T$.
\end{observation}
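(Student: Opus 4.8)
The plan is to unwind definitions and compare the two first-order matrix conditions \eq{m_beta} and \eq{m-lam} directly. First I would observe that orthonormality of $\mathfrak R$ relative to the standard inner product means $R^TR=I$, equivalently $R^{-1}=R^T$, i.e.\ $L=R^T$, so that $L^T=R$. Substituting this into the $\beta$-system \eq{m_beta} gives the condition $d[R\,\Beta\,R^T]\wedge du=0$ on the diagonal matrix $\Beta=\diag[\beta^1,\dots,\beta^n]$, while the $\lambda$-system \eq{m-lam} is the condition $d[R\,\Lambda\,L]\wedge du=0$ on the diagonal matrix $\Lambda=\diag[\lambda^1,\dots,\lambda^n]$; since $L=R^T$ the latter reads $d[R\,\Lambda\,R^T]\wedge du=0$. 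The two conditions are formally identical: a tuple $(\beta^1,\dots,\beta^n)$ solves $\beta(\mathfrak R)$ if and only if $(\lambda^1,\dots,\lambda^n)=(\beta^1,\dots,\beta^n)$ solves $\lambda(\mathfrak R)$. Hence the two systems coincide as claimed.

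For the second assertion, suppose \eq{cons} is a conservative system whose Jacobian $Df$ has orthonormal eigen-frame $\mathfrak R$ with eigenvalues $\lambda^i$. By definition of the $\lambda$-system, $A:=R\Lambda L=R\Lambda R^T$ is the $u$-Jacobian $Df$. Since $R^T=R^{-1}$, the matrix $A$ is symmetric: $A^T=(R\Lambda R^T)^T=R\Lambda^T R^T=R\Lambda R^T=A$, using that $\Lambda$ is diagonal. By Proposition \ref{symm}, a symmetric $u$-Jacobian is a $u$-Hessian, so $Df=D^2\eta$ for some scalar field $\eta$, equivalently $f=(\nabla\eta)^T$. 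Thus \eq{cons} is a gradient system. Conversely, $(\lambda^1,\dots,\lambda^n)$ is also a solution of $\beta(\mathfrak R)$ by the first part, and one checks via \eq{cond-eta} (with $L=R^T$) that $D^2\eta=L^T\Lambda L=R\Lambda R^T=Df$, confirming that the extension recovered from this $\beta$-solution is exactly the potential $\eta$ for $f$.

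I do not expect any serious obstacle here: the observation is essentially a bookkeeping identity once one notices that orthonormality collapses $L$ onto $R^T$, after which \eq{m_beta} and \eq{m-lam} become literally the same equation, and symmetry of $R\Lambda R^T$ together with Proposition \ref{symm} delivers the gradient structure. The only point requiring a word of care is the direction of the transpose convention (flux functions are columns while gradients are rows), which is why the conclusion is stated as $f=(\nabla\eta)^T$ rather than $f=\nabla\eta$; this is already fixed by the paper's stated conventions and needs no further comment.
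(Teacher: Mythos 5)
Your proof is correct and follows essentially the same route as the paper: orthonormality gives $L=R^{T}$, so $L^{T}\Beta L$ and $R\Lambda L$ have the identical form and the two systems coincide, and symmetry of $R\Lambda R^{T}$ together with Proposition~\ref{symm} yields the gradient structure $f=(\nabla\eta)^{T}$. The extra verification via \eq{cond-eta} that the recovered extension is the potential of $f$ is a harmless (and correct) addition beyond what the paper records.
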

When the frame $\R$ is orthogonal the solutions to the $\lam$- and $\bet$-systems are 
related by scaling, and  the $\beta$-system necessarily have non-trivial solutions: 
\begin{observation}(Orthogonal frames)\label{orthogonal}
	Assume $\R$ is an orthogonal frame relative to the standard inner product, and let 
	$\alpha^i=|R_i|^{-1}$, such that $\ti\R=\{\alpha^1R_1,\dots,\alpha^nR_n\}$ is the 
	orthonormal scaling of $\R$. From Observations~\ref{scaling}  and~\ref{orthonormality}, 
	and the fact that the solution set of $\lam(\ti\R)$ coincides with that of $\lam(\R)$, we obtain:
	\beq\label{scale_rel}
		\text{$\beta^1,\dots,\beta^n$ solves $\beta(\R)$ if and only if $(\alpha^1)^2\,\beta^1,\dots,(\alpha^n)^2\,\beta^n$ solves 
		$\lam(\R)$.}
	\eeq 
	It follows from this that $\beta(\R)$ has non-trivial solutions. Indeed, $\lam(\R)$ has a 
	one-parameter family of trivial solutions $\lam^1=\dots=\lam^n\equiv\bar\lam\in\RR$. 
	However, as solutions to $\beta(\ti\R)\equiv \lam(\ti \R)$, these are non-trivial solutions. 
	They provide the extensions $\eta(u)=\textstyle \frac{1}{2}\bar\lam\,|u|^2$, which
	(according to the last part of Observation \ref{scaling}) are also extensions corresponding
	to the original, un-scaled frame $\R$. 
	In particular, we obtain the well-known fact that if \eq{cons} is a gradient system with flux
	$f=(\nabla \eta)^T$, then it has $\eta$ as an extension and $|u|^2$ as a strict entropy
	(\cites{fl, god, daf}). 
	
	We finally note that {\em any} other solution of $\lam(\R)$ also provides an extension of the 
	original gradient system $u_t+f(x)_x=0$ (see Example~\ref{rich_rank0}).
\end{observation}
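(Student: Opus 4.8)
The plan is to derive the whole statement by bootstrapping from two facts already in hand: the scaling law for $\beta(\mathfrak R)$ (Observation~\ref{scaling}) and the coincidence $\beta(\tilde{\mathfrak R})=\lambda(\tilde{\mathfrak R})$ when $\tilde{\mathfrak R}$ is orthonormal (Observation~\ref{orthonormality}). The orthonormal rescaling $\tilde{\mathfrak R}=\{\alpha^1R_1,\dots,\alpha^nR_n\}$, $\alpha^i=|R_i|^{-1}$, serves as the bridge between the $\beta$- and $\lambda$-systems of the given orthogonal frame $\mathfrak R$.

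First I would prove \eq{scale_rel} by composing three equivalences. By Observation~\ref{scaling}, $\beta=(\beta^1,\dots,\beta^n)$ solves $\beta(\mathfrak R)$ iff $\tilde\beta:=((\alpha^1)^2\beta^1,\dots,(\alpha^n)^2\beta^n)$ solves $\beta(\tilde{\mathfrak R})$. Because $\tilde{\mathfrak R}$ is orthonormal for the standard inner product, $\tilde L=\tilde R^{T}$, and then Observation~\ref{orthonormality} identifies the systems $\beta(\tilde{\mathfrak R})$ and $\lambda(\tilde{\mathfrak R})$, so $\tilde\beta$ solves the former iff it solves the latter. Finally the $\lambda$-system is scaling invariant, since $\tilde R\Lambda\tilde L\equiv R\Lambda L$ (diagonal matrices commute); hence $\tilde\beta$ solves $\lambda(\tilde{\mathfrak R})$ iff it solves $\lambda(\mathfrak R)$. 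Chaining the three equivalences gives \eq{scale_rel}.

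Reading \eq{scale_rel} from right to left then yields non-trivial solutions: the constant family $\lambda^1\equiv\cdots\equiv\lambda^n\equiv\bar\lambda$ solves $\lambda(\mathfrak R)$ for every $\bar\lambda\in\RR$, so $\beta^i:=(\alpha^i)^{-2}\bar\lambda=|R_i|^2\bar\lambda$ solves $\beta(\mathfrak R)$; since the $R_i$ are everywhere nonzero, this solution is non-degenerate (Definition~\ref{non-deg}) as soon as $\bar\lambda\neq0$, and in particular it is not the trivial $\beta$-solution $\beta\equiv0$. To identify the resulting extension I would plug $\Beta=\bar\lambda\,\diag[|R_1|^2,\dots,|R_n|^2]$ into \eq{cond-eta}; since $\{R_i/|R_i|\}$ is an orthonormal basis one has $L^{T}\Beta L=\bar\lambda\sum_i|R_i|^{-2}R_iR_i^{T}=\bar\lambda I$, so $D^2_u\eta=\bar\lambda I$ and $\eta(u)=\tfrac12\bar\lambda|u|^2$ up to a trivial extension \eq{triv_ext}; by the last sentence of Observation~\ref{scaling} this $\eta$ is an extension for $\mathfrak R$ itself, not merely for $\tilde{\mathfrak R}$.

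For the gradient-system special case, note that $f=(\nabla\eta)^{T}$ forces $Df=D^2\eta$ to be symmetric, hence its eigen-frame $\mathfrak R$ may be chosen orthogonal for the standard inner product and the previous paragraph applies: the choice $\bar\lambda=2$ exhibits $|u|^2$ as an extension, and $D^2(|u|^2)=2I$ is strictly positive definite, so it is a strict entropy (recovering the classical fact of \cites{fl,god,daf}); that $\eta$ itself is an extension is immediate from Definition~\ref{ext_entr}, as $\nabla\eta\,Df=\nabla\eta\,D^2\eta=\tfrac12\nabla(|\nabla\eta|^2)$, with flux $\tfrac12|\nabla\eta|^2$. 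Finally, \emph{any} solution $\lambda$ of $\lambda(\mathfrak R)$ gives, via \eq{scale_rel}, the $\beta(\mathfrak R)$-solution $\beta^i=|R_i|^2\lambda^i$ and hence, through \eq{cond-eta} and Proposition~\ref{symm}, an extension (generally not quadratic) of the gradient system $u_t+f(u)_x=0$; see Example~\ref{rich_rank0}. I expect no genuine obstacle anywhere in this argument: everything is assembly of the already-established Observations~\ref{scaling} and~\ref{orthonormality}, and the only care needed is consistent tracking of the factors $(\alpha^i)^{\pm2}$ and of the two different meanings of the word ``trivial'' (constant diagonal $\Lambda$ for the $\lambda$-system versus $\beta\equiv0$ for the $\beta$-system).
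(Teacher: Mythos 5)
Your proposal is correct and follows essentially the same route as the paper: \eq{scale_rel} is obtained by chaining Observation~\ref{scaling}, the identification $\beta(\ti\R)\equiv\lam(\ti\R)$ from Observation~\ref{orthonormality}, and the scaling-invariance of the $\lambda$-system, and the non-trivial solutions then come from the constant solutions $\lam^i\equiv\bar\lam$ exactly as in the paper. Your added explicit computations (that $L^T\Beta L=\bar\lam I$ yields $\eta=\tfrac12\bar\lam|u|^2$, and the direct check $\nabla\eta\,Df=\tfrac12\nabla(|\nabla\eta|^2)$ for gradient systems) are correct and consistent with what the paper asserts without detail.
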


If $n=2$ neither the $\bet$-system nor the $\lam$-system has an algebraic part.
For $n=3$ we shall see that the algebraic parts \eq{sev2} and \eq{beta2} have
	the same rank (Proposition \ref{n=3_ranks}). 
	However, Example~\ref{unequal_rank} illustrates that equality of ranks of the algebraic parts 
	does {\em not} generalize to $n\geq 4$. The rank zero case provides an exception in this respect:
\begin{observation}\label{zero-rank}
	The following three statements  are equivalent
	\begin{enumerate}
	\item $\Gamma^i_{jk}=0$ whenever $\eps(i,j,k)=1$.
	\item There are no algebraic conditions in the $\lam$-system \eq{sev1}-\eq{sev2}.
	\item There are no algebraic conditions in the  $\bet$-system \eq{beta1}-\eq{beta2}. 
	\end{enumerate} 
	
\end{observation}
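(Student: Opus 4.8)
The plan is to prove the cyclic chain of implications $(1)\Rightarrow(2)\Rightarrow(3)\Rightarrow(1)$, or more simply to observe that all three statements are literally equivalent by inspecting the algebraic parts \eq{sev2} and \eq{beta2}. First I would recall the precise shape of the algebraic equations. In the $\lambda$-system, \eq{sev2} reads $(\lam^i-\lam^k)\Gamma_{ji}^k=(\lam^j-\lam^k)\Gamma_{ij}^k$ for $i<j$ with $\eps(i,j,k)=1$; in the $\beta$-system, \eq{beta2} reads $\bet^k c_{ij}^k+\beta^j\Gamma_{ik}^j-\beta^i\Gamma^i_{jk}=0$ for $i<j$ with $\eps(i,j,k)=1$, where $c_{ij}^k=\Gamma_{ij}^k-\Gamma_{ji}^k$. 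The key bookkeeping point, already flagged in the text right after \eq{beta2}, is that the coefficients appearing in these algebraic relations are \emph{exactly} the $\Gamma_{ij}^k$ with $\eps(i,j,k)=1$ (all three indices distinct), and every such coefficient does appear. So the content of statements (2) and (3) is that the corresponding systems of algebraic relations are vacuous as constraints on $\lambda$ (resp.\ $\beta$).

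For $(1)\Rightarrow(2)$ and $(1)\Rightarrow(3)$: if $\Gamma^i_{jk}=0$ for all triples with $\eps(i,j,k)=1$, then in particular $c_{ij}^k=\Gamma_{ij}^k-\Gamma_{ji}^k=0$ for such triples, so every coefficient in \eq{sev2} and in \eq{beta2} vanishes identically; both algebraic parts reduce to $0=0$ and impose no condition. For the converses I would argue that "no algebraic conditions" forces all the coefficients to vanish. The cleanest way: the algebraic part of the $\lambda$-system is a homogeneous linear system in the unknowns $(\lam^1,\dots,\lam^n)$ with coefficient functions built from the $\Gamma^i_{jk}$, $\eps(i,j,k)=1$; "there are no algebraic conditions" means this linear system is identically trivial, i.e.\ all coefficients are zero. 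Now fix a triple with $\eps(i,j,k)=1$. Setting $\lam^i=\lam^k$ (and leaving $\lam^j$ free) in \eq{sev2} isolates $(\lam^j-\lam^k)\Gamma_{ij}^k=0$, forcing $\Gamma_{ij}^k\equiv 0$; symmetrically $\Gamma_{ji}^k\equiv 0$. Since $i,j$ range over all ordered pairs of distinct indices $\neq k$, this gives $\Gamma^i_{jk}=0$ for all triples with $\eps(i,j,k)=1$, which is (1). The implication $(3)\Rightarrow(1)$ is analogous: in \eq{beta2}, specializing $\beta$ (e.g.\ $\beta^k=0$, $\beta^i=0$, $\beta^j$ free) isolates $\beta^j\Gamma_{ik}^j=0$, hence $\Gamma_{ik}^j\equiv0$; cycling through the choices and the index roles kills every $\Gamma^i_{jk}$ with distinct indices.

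I expect no serious obstacle here; the only thing requiring a little care is phrasing precisely what "there are no algebraic conditions" means and justifying the specialization argument for the converses. One has to make sure the chosen specializations of $\lambda$ (resp.\ $\beta$) are legitimate — but since (2) and (3) assert that the algebraic relations hold for \emph{all} $\lambda$ (resp.\ all $\beta$) simply because the coefficients vanish, any pointwise choice of values is allowed, and the argument goes through at each $u\in\Omega$. Alternatively, and perhaps more transparently for the write-up, I would simply state that by the remark following \eq{beta2} the set of coefficients occurring in the algebraic part of either system is precisely $\{\Gamma^i_{jk}:\eps(i,j,k)=1\}$, so the algebraic part is vacuous if and only if all these coefficients vanish — giving $(2)\Leftrightarrow(1)\Leftrightarrow(3)$ at once, with the converse specialization argument included only as the justification that a nonzero coefficient genuinely produces a nontrivial constraint.
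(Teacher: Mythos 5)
Your proposal is correct and takes essentially the same route as the paper, which states this as an unproved observation whose implicit justification is exactly the remark following \eq{beta2} that all coefficients $\Gamma_{ij}^k$ with $\eps(i,j,k)=1$ appear in the algebraic parts \eq{sev2} and \eq{beta2} and only those appear. Your explicit specialization argument for the converses (choosing particular values of $\lam$ resp.\ $\bet$ to isolate each coefficient) correctly fills in the detail the paper leaves to the reader.
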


When the algebraic parts \eq{beta2} and \eq{sev2} of the $\bet$-system and the $\lam$-system are 
non-trivial, we can easily derive, {\em under the assumption of strict hyperbolicity}, the following relationship 
between  solutions of the $\bet$-system and the $\lam$-system (see Proposition 8 in \cite{sev}): 
\begin{observation}\label{obs_sev}
Under the assumption of strict hyperbolicity:
\beq\label{lam_bet_reln}
	\frac{c_{jk}^i\beta^i}{\lam^j-\lam^k}+\frac{c_{ki}^j\beta^j}{\lam^k-\lam^i}+
	\frac{c_{ij}^k\beta^k}{\lam^i-\lam^j}=0\qquad \text{whenever $i<j<k$.}
\eeq
In the present setup this may be deduced directly from the algebraic 
parts of the $\lam$- and $\beta$-systems:
replace $\Gamma_{ji}^k$ with $\Gamma_{ij}^k-c_{ij}^k$
in the algebraic part of $\lam$ system \eq{sev2}, and 
solve for $\Gamma_{ij}^k$
\[\Gamma_{ij}^k=\frac{\lam^i-\lam^k}{\lam^i-\lam^j} c_{ij}^k\,,\qquad \epsilon(i,j,k)=1.\]
Substituting 
\[\Gamma_{ik}^j=\frac{\lam^i-\lam^j}{\lam^i-\lam^k} c_{ik}^j
\qquad\text{and}\qquad \Gamma_{jk}^i=\frac{\lam^j-\lam^i}{\lam^j-\lam^k} c_{jk}^i\]
in the algebraic part \eq{beta2} of the $\beta$-system yields \eq{lam_bet_reln}.

\end{observation}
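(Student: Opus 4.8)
\noindent\emph{Proof proposal.} The plan is to derive \eq{lam_bet_reln} by a purely algebraic manipulation of the algebraic parts \eq{sev2} of the $\lambda$-system and \eq{beta2} of the $\beta$-system; the differential equations \eq{sev1} and \eq{beta1} will not be used, and no integrability or compatibility argument is needed. Throughout I would rely only on the two elementary identities $c_{ab}^c=\Gamma_{ab}^c-\Gamma_{ba}^c$ and $c_{ab}^c=-c_{ba}^c$ (both immediate from \eq{christoffel}), together with strict hyperbolicity, which guarantees $\lam^a(u)\neq\lam^b(u)$ for $a\neq b$ and hence licenses every division below.

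First I would obtain a closed form for the coefficients $\Gamma^c_{ab}$ with $\eps(a,b,c)=1$. Fixing distinct $a,b,c$ with $a<b$, substituting $\Gamma_{ba}^c=\Gamma_{ab}^c-c_{ab}^c$ into \eq{sev2}, collecting the $\Gamma_{ab}^c$-terms and dividing by $\lam^a-\lam^b$ yields
\[
  \Gamma_{ab}^c=\frac{\lam^a-\lam^c}{\lam^a-\lam^b}\,c_{ab}^c .
\]
Applying this with $(a,b,c)=(i,k,j)$ and $(a,b,c)=(j,k,i)$ (both with $a<b$) gives $\Gamma_{ik}^j=\tfrac{\lam^i-\lam^j}{\lam^i-\lam^k}\,c_{ik}^j$ and $\Gamma_{jk}^i=\tfrac{\lam^j-\lam^i}{\lam^j-\lam^k}\,c_{jk}^i$.

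Next, for a fixed ordered triple $i<j<k$, I would write \eq{beta2} for the pair $(i,j)$, namely $\beta^k c_{ij}^k+\beta^j\Gamma_{ik}^j-\beta^i\Gamma_{jk}^i=0$, insert the two formulas just found, divide through by $\lam^i-\lam^j$, and simplify the eigenvalue fractions (using $\lam^j-\lam^i=-(\lam^i-\lam^j)$); rewriting $c_{ik}^j=-c_{ki}^j$ to adjust signs, the three terms become exactly $\tfrac{c_{ij}^k\beta^k}{\lam^i-\lam^j}$, $\tfrac{c_{ki}^j\beta^j}{\lam^k-\lam^i}$ and $\tfrac{c_{jk}^i\beta^i}{\lam^j-\lam^k}$, whose sum vanishes. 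This proves \eq{lam_bet_reln} for ordered triples, and since its left-hand side is unchanged under every permutation of $\{i,j,k\}$ (again via $c_{ab}^c=-c_{ba}^c$), the identity holds for all distinct triples.

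I do not anticipate a genuine obstacle here: the entire argument is a finite linear computation, and strict hyperbolicity is exactly what rules out division by zero. The one delicate point is the index bookkeeping --- invoking \eq{sev2} and \eq{beta2} for the correct pair at each step and keeping the two sign conventions straight --- so I would record those conventions explicitly at the outset, as above. A more invariant alternative would be to evaluate the $2$-forms $d[R\Lambda L]\wedge du=0$ and $d[L^T\Beta L]\wedge du=0$ directly on triples of frame vector fields, but this reduces to the same bookkeeping and offers no real shortcut.
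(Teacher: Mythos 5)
Your proposal is correct and follows essentially the same route as the paper: solve the algebraic part \eqref{sev2} of the $\lambda$-system for $\Gamma_{ab}^c=\frac{\lam^a-\lam^c}{\lam^a-\lam^b}c_{ab}^c$ using $\Gamma_{ba}^c=\Gamma_{ab}^c-c_{ab}^c$, substitute the resulting expressions for $\Gamma_{ik}^j$ and $\Gamma_{jk}^i$ into \eqref{beta2}, and divide by $\lam^i-\lam^j$, with strict hyperbolicity justifying the divisions. Your sign bookkeeping ($c_{ab}^c=-c_{ba}^c$) and the final symmetry remark are consistent with the paper's computation, so no gap remains.
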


From the point of view of constructing relevant systems \eq{cons} from a given frame 
$\mathfrak R$, one would first solve $\lambda(\mathfrak R)$ and 
then proceed to solve $\beta(\mathfrak R)$, provided the former possesses non-trivial 
solutions. However, to highlight the fact that the solution sets of $\lam(\R)$ and 
$\bet(\R)$ may be of different size, we have also included Example~\ref{ex_jk1_5.4}. 
The two frames in that example show that $\lambda(\mathfrak R)$ may have only trivial 
solutions while $\beta(\mathfrak R)$ possesses non-trivial solutions.

\bigskip
\subsection{Connections on frame bundles and a coordinate free formulation}
\label{geom-prelim}
The coefficients $\Gamma_{ij}^k$ that appear in both the $\bet$- and the $\lam$-systems have 
a natural geometric interpretation as connection components (Christoffel symbols) of a flat and 
symmetric connection relative to the frame $\R$.
In this section we review the basic facts about connections on a frame bundle, which we will use 
to analyze the $\bet$-system. In Remark~\ref{bet-free} we give a coordinate-free definition of a 
Hessian metric and indicate how the $\bet(\R)$-system can be obtained in a coordinate-free 
manner. Similarly, Remark~\ref{lam-free} provides a coordinate-free definition of a Jacobian 
tensor and describes a coordinate-free approach to the $\lam(\R)$-system. 
A comprehensive account of the differential geometry material used in this section 
can be found in \cite{amf}, \cite{lee}, and \cite{hirohiko}.

Given an $n$-dimensional smooth manifold $M$
we let $\mathcal{X}(M)$ and $\mathcal{X}^*(M)$ denote the set of smooth  vector fields and 
differential 1-forms on $M$, respectively.
A \emph{frame} $\mathfrak R=\{r_1,\dots, r_n\}$  is 
a set of vector fields which span the tangent space $T_p M$ at each point $p\in M$. 
A \emph{coframe} $\{\ell^1,\dots, \ell^n\}$ is a set of $n$ differential 1-forms 
which span the cotangent space $T_p^* M$ at each point $p\in M$.  
The coframe and frame are \emph{dual} if $\ell^i(r_j)=\delta^i_j$ (Kronecker delta). If $u^1,\dots, u^n$ 
are local coordinate functions on $M$, then $\{\pd{}{u^1},\dots,\pd{}{u^n}\}$ is the corresponding local
\emph{coordinate frame}, while $\{du^1,\dots,du^n\}$ is the  dual local \emph{coordinate coframe}.
The \emph{structure coefficients} $c^k_{ij}$ of $\mathfrak R$ 
are defined by
\beq\label{lie-bracket}
	[r_i,r_j]=\sum_{k=1}^nc^k_{ij}\, r_k\,,
\eeq
and the dual coframe has related structure equations given by 
\beq\label{dl}
	d\ell^k=-\sum_{i<j} c^k_{ij}\, \ell^i\wedge \ell^j\,.
\eeq 
It can be shown (see Proposition 5.14 in \cite{sp1}) that there exist coordinate functions 
$w^1,\dots,w^n$ on an open subset 
of $\Omega$ such that $r_i=\pd{}{w^i},\,i=1\dots,n$, if and only if $r_1,\dots,r_n$ 
commute, i.e.\ all structure coefficients are zero. We recall the slightly weaker requirement of richness:
\begin{definition}\label{richness}
	The frame $\{r_i\}_{i=1}^n$ is {\em rich} provided its structure coefficients satisfy 
	\beq\label{rich_def}
		c^k_{ij}=0 \qquad\text{whenever $\eps(i,j,k)=1$.}
	\eeq
\end{definition}
\noindent
We note that a rich frame may be scaled so as to yield a commutative frame \cite{daf}.

An \emph{affine connection} $\nabla$ on $M$ is an 
$\RR$-bilinear map 
\[\mathcal{X}(M)\times \mathcal{X}(M)\to\mathcal{X}(M)\qquad\qquad
	(X,Y)\mapsto \nabla_X Y\]
such that for any smooth function $f$ on  $M$
\beq\label{connection}
	\nabla_{fX}Y=f\nabla_XY, \qquad \nabla_X (fY)=(Xf)Y+f\nabla_XY\,.
\eeq
By $\RR$-bilinearity and \eq{connection} the connection is uniquely defined by prescribing it 
on a frame:
$$\nabla_{r_i}r_j=\sum_{k=1}^n\Gamma^k_{ij}r_k,$$
where the smooth coefficients $\Gamma^k_{ij}$ are called \emph{connection  components, or Christoffel symbols,} 
relative to the frame $\{r_1,\dots,r_n\}$. Any choice of a frame and $n^3$ functions 
$\Gamma^k_{ij},\, i,j,k=1,\dots,n,$ defines an affine connection on $M$.
A change of frame induces a change of the connection components, and this 
change is not  tensorial. E.g., a connection with zero components 
relative to a coordinate frame, may have non-zero components relative to 
a non-coordinate frame.

A connection uniquely defines {\em a covariant derivative} $\nabla_X T$ of any tensor field $T$ on $M$ in the direction of a vector field $X$ (see for instance Section 1.2 of \cite{hirohiko}).

Given a frame $\{r_1,\dots, r_n\}$ with associated Christoffel symbols $\Gamma^k_{ij}$
and  the dual frame $\{\ell^1,\dots, \ell^n\}$, we define the \emph{connection 1-forms} $\mu_i^j$ by
\beq\label{conn-forms}\mu_i^j:=\sum_{k=1}^n\Gamma^j_{ki}\ell^k\,.\eeq 
In turn, these are used to define the \emph{torsion} 2-forms
\beq\label{tor}
	{\tor}^i:=d\ell^i+\sum_{k=1}^n\mu^i_k\wedge  \ell^k
	=\sum_{k<m}T^i_{km}\ell^k\wedge \ell^m,\qquad i=1,\dots,n\,,
\eeq
and the \emph{curvature} 2-forms
\beq\label{cur}
	\cur^j_i:=d\mu^j_i+\sum_{k=1}^n\mu^j_k\wedge\mu_i^k
	=\sum_{k<m}R^j_{i\,km} \ell^k\wedge \ell^m\,.
\eeq
The second equalities of \eq{tor} and \eq{cur} define the components of 
torsion and curvature tensors, respectively:
\begin{eqnarray}
	\label{T1}T^i_{km}&=&\Gamma^i_{km}-\Gamma^i_{mk}-c^i_{km}\\
	\label{R} R^j_{i\,km}&=&r_k\big(\Gamma^j_{mi}\big)
	-r_m\big(\Gamma^j_{ki}\big)+\sum_{s=1}^n\big(\Gamma^j_{ks}
	\Gamma^s_{mi}-\Gamma^j_{ms}\Gamma^s_{ki}-c^s_{km}
	\Gamma^j_{si}\big).
\end{eqnarray} 
We can write equations \eq{tor} and \eq{cur} in the compact matrix 
form
\beq\label{tor-cur} 
	\tor=d\ell+\mu\wedge \ell,\quad \cur=d\mu+\mu\wedge\mu
\eeq
where $\tor=(\tor^1,\dots,\tor^n)^T$, and 
$\cur$ and $\mu$ are the matrices with components $\cur^j_i$  and $\mu^j_i$, respectively.
The connection is called  \emph{symmetric} if the torsion form is identically zero and  
it is called  \emph{flat}  if the curvature form is  identically zero. Equivalently:
\beq\label{cur0-tor0}	
	d\ell=-\mu\wedge \ell\qquad\mbox{(Symmetry)},
	\qquad
	d\mu=-\mu\wedge\mu \qquad\mbox{(Flatness).}
\eeq
In terms of Christoffel symbols and structure coefficients this 
is equivalent to 
\beq\label{T0}
	c^i_{km} = \Gamma^i_{km}-\Gamma^i_{mk}\qquad\mbox{(Symmetry)}
\eeq
and
\beq\label{R0}
	r_m\big(\Gamma^j_{ki}\big)-r_k\big(\Gamma^j_{mi}\big)
	=\sum_{s=1}^n \big(\Gamma^j_{ks}\Gamma^s_{mi}-\Gamma^j_{ms}
	\Gamma^s_{ki}-c^s_{km}\Gamma^j_{si}\big)\qquad\mbox{(Flatness)}.
\eeq 
One can also show that a connection $\nabla$ is symmetric and flat if and only if in 
a neighborhood  of each point there exist coordinate functions
 $u^1,\dots,u^n$ with the property that the 
Christoffel symbols relative to the coordinate frame are zero: 
\beq\label{affine}
	\nabla_{\pd{}{u^i}}\pd{}{u^j}=0\qquad \mbox{for all $i,j=1,\dots,n$.}
\eeq
Such a coordinate system is called an {\em affine coordinate system} with respect to $\nabla$. 
Manifolds with a symmetric and flat connection are called \emph{flat}.

\begin{remark}\label{bet-free}{(\sc  Coordinate-free definition of a Hessian and the $\bet$-system)} 
In \cite{hirohiko},   $g$ is called a Hessian metric on a  manifold $M$ with a {\em flat, symmetric} 
connection $\nabla$, if there exists a function 
$\eta\colon M\to\RR$ such that
\beq\label{hessian_free}
	g=\nabla d\eta,
\eeq 
Explicitly \eq{hessian_free} says 
that for  any two vector fields $X,Y\in\mathcal X(M)$:
\beq\label{hessian_free_detailed}
	g(X,Y)=(\nabla_Xd \eta)(Y):=X(d\eta(Y))-d\,\eta(\nabla_X Y).
\eeq
The advantage of the condition \eq{hessian_free} is that it provides us with a coordinate free definition 
of Hessian metrics, whereas a definition based on Hessian matrices requires a choice of coordinates. 
The pair $(\nabla,g)$ is called a {\em Hessian structure} on $M$.  Applying \eq{hessian_free_detailed} 
to an affine coordinate frame $\pd{}{u^i}$, $i=1,\dots,n,$ we can verify that 
$g(\pd{}{u^i},\pd{}{u^j})=\frac{\del^2\eta}{\del u^i\,\del u^j}$. If $\R=\{r_1,\dots,r_n\}$ is a frame, whose 
Christoffel symbols relative to $\nabla$ are $\Gamma_{ij}^k$, then a simple computation shows that 
\beq\label{gr}
	g(r_i,r_j)=r_i(r_j (\eta))-\sum_{k=1}^n\Gamma^k_{i,j}\,r_k(\eta).
\eeq
We observe  from \eq{gr}  that the symmetry of the metric, $g(r_i,r_j)=g(r_j,r_i)$, is equivalent 
to the commutator identity $[r_i,r_j]\eta=\sum_{k=1}^nc^k_{ij}\,r_k(\eta).$

It is  shown in \cite{hirohiko} that a pair $(\nabla,g)$ of a flat, symmetric  connection $\nabla$ 
and a metric $g$, is a Hessian structure if and only if it satisfies the Codazzi equations
\beq\label{codazzi} (\nabla_X g)(Y, Z)= (\nabla_Y g)(X, Z), \quad \forall X,Y, Z \in \mathcal X(M).\eeq
Let us further assume that $\R$ is an orthogonal frame relative to $g$, i.e. $g(r_i,r_j)=\delta^i_j \beta^i$. 
Recalling that $(\nabla_X g)(Y, Z):=X(g(Y,Z))-g(\nabla_XY,Z)-g(Y,\nabla_X Z)$, and substituting
$X=r_i$, $Y=r_j$ with $i\neq j$, and $Z=r_k$ in \eq{codazzi}, we obtain the $\bet(\R)$-system 
\eq{beta1}-\eq{beta2}.

 \end{remark}

\begin{remark}({\sc  Coordinate-free definition of a Jacobian and the 
$\lam$-system})\label{lam-free}  
We can similarly provide a coordinate-free definition of a Jacobian. 
For a manifold $M$ with a {\em flat, symmetric} connection $\nabla$, a linear map 
$J\colon \mathcal X(M)\to \mathcal X(M)$ is called  a {\em Jacobian map}
if  there exists a vector field  $V\in\mathcal X(M)$ such that 
\beq\label{jacobian_free} 
	J=\nabla V \quad \Leftrightarrow \quad J(X)=\nabla_X V, \quad \forall X\in \mathcal X(M). 
\eeq
If \eq{jacobian_free} holds we 
say that $J$ is the {\em Jacobian of $V$} and use the notation $J_V$. 
The flatness and symmetry of $\nabla$ implies that for $\forall X,Y\in \mathcal X(M)$
\beq\label{j-cond}
	\nabla_XJ_V(Y)-\nabla_YJ_V(X)
	:=\nabla_X(\nabla_Y(V))-\nabla_Y(\nabla_X(V))
	=\nabla_{[X,Y]} V=:J_V([X,Y]).
\eeq
Let $(u^1,\dots, u^n)$ be an affine system of coordinates (see \eq{affine}) and 
$V=\sum_{i=1}^n f^i(u)\pd{}{u^i}$, then according to \eq{jacobian_free}
$$J_V(\pd{}{u^j})=\sum_i^n\pd{f^i}{u^j}\,\pd{}{u^i},$$ which is exactly $j$-th column 
vector of the usual Jacobian matrix of  a vector valued function $f(u)=(f^1,\dots,f^n)$.
If $\R=\{r_1,\dots,r_n\}$ is a frame whose Christoffel symbols relative to $\nabla$ are 
$\Gamma_{ij}^k$ and $V=\sum_{i=1}^n \tilde f^i\,r_i$ then \eq {jacobian_free} implies:
\beq\label{Jr}
	J_V(r_j)=\sum_{i=1}^n\left[r_j (\tilde f^i)
	+\sum_{k=1}^n\Gamma^i_{jk}\,\tilde f^k\right]\,r_i.
\eeq
Let us further assume that $\R$ is  a set of eigenvector-fields of $J$  with real-valued 
eigenvalues $\lam^i$:
$$J(r_i)=\nabla_{r_i} V=\lambda^i r_i\,.$$
Substitution of $X=r_i$ and $Y=r_j$, with $i\neq j$, into \eq{j-cond} produces the 
$\lam(\R)$-system \eq{sev1}-\eq{sev2}.


\end{remark}

Before proceeding with the analysis of the $\beta$-system we return to the setting of 
Sections \ref{problem}-\ref{lambda_prob} where the coordinate system $(u^1,\dots,u^n)$ and  
the frame $\R=\{r_1,\dots,r_n\}$ on $\Omega$ are fixed. 
From now on $\nabla$ will denote the unique flat and symmetric connection 
satisfying \eq{affine}. As indicated at the beginning of Section \ref{geom-prelim} the coefficients 
$\Gamma^k_{ij}$ defined in \eq{christoffel}, and appearing in the $\lam$- and $\bet$-systems, 
are then the Christoffel symbols of the connection $\nabla$ relative to the frame $\R$. Also, the 
notation for the coefficients $c_{ij}^k$ in \eq{christoffel} is consistent with symmetry \eq{T0} of $\nabla$.

\medskip

\section{Analysis of $\beta(\mathfrak R)$ with no algebraic part}\label{no-alg}
In this section we analyze  $\bet$-systems with no algebraic part. We show that the frames 
corresponding to such systems are necessarily rich. Examples in Section~\ref{rich-const} 
illustrate that the converse  is not true: there are rich systems whose
corresponding $\bet$-system imposes non-trivial algebraic constraints. It is true, however, that the 
$\bet$-system for a rich frame that admits a {\em strictly hyperbolic} flux has no algebraic constraints.

In \cite{jk1} we analyzed $\lambda(\R)$ for general rich frames $\R$. When the algebraic 
part is non-trivial this analysis is complicated due to possible additional algebraic constraints 
imposed by the differential part of $\lambda(\R)$. A similar breakdown does not seem feasible 
for $\beta(\R)$ unless $n\leq 3$. In this section we therefore treat rich systems of any size, but 
with trivial algebraic part. Rich systems of three equations with algebraic constraints are covered in 
Section~\ref{rich_rank=1_n=3}. 

\subsection{Absence of the algebraic part implies richness}\label{rich}
Given a frame $\R$ such that $\bet(\R)$ does not impose any algebraic constraints.
According to Observation~\ref{zero-rank} this is the case if and only if the same is 
true for the $\lam$-system, and occurs if and only if
\beq\label{no_alg-G}
	\Gamma_{ij}^k=0 \qquad\text{whenever $\eps(i,j,k)=1$.}
\eeq
The symmetry conditions \eq{T0} then imply that
\beq\label{no_alg-c}
	c_{ij}^k=0 \qquad\text{whenever $\eps(i,j,k)=1$},
\eeq 
whence, according to Definition \ref{richness}, the frame $\R$ is rich, and we have
\beq\label{rich-frame}
	[r_i,r_j]\in\text{span}\, \{r_i,r_j\}\qquad\text{for all $1\leq i,\, j\leq n$.}
\eeq
It follows from \eq{rich-frame}  that for every $i=1,\dots,n$, the vector fields 
$r_1, \dots, r_{i-1},r_{i+1},\dots,r_n$ are in involution. By Frobenius' theorem 
(Chapter 6 in \cite{sp1},  Section 7.3 in \cite{daf}) there exist scalar fields $w^i$,  $i=1,\dots,n$,
on an open neighborhood (again denoted by $\Omega$) of an arbitrary  point in $\Omega$, such that  
\[r_j(w^i)\left\{\begin{array}{ll}
	=0 & \mbox{if $i\neq j$}\,\\
	\neq 0 & \mbox{if $i=j$}\,
\end{array} \quad \forall u\in \Omega\,.\right.\]
By setting $\tilde r_i:= r_i/r_i(w^i)$ we achieve the normalization
\beq\label{norm}
	\tilde r_j(w^i)\equiv \delta^i_j\,\,\Longleftrightarrow \,\tilde r_i=\pd{}{w^i}.
\eeq
By Observation~\ref{scaling} we may assume, without loss of generality, that the given rich 
frame $\R$ satisfies \eq{norm} and  therefore is commutative. We denote the change of 
coordinates map by $\rho$: 
$$u\mapsto \rho(u)=(w^1(u),\dots,w^n(u)).$$
The $w$-coordinates are  referred to as {\em Riemann coordinates}.

\subsection{$\beta(\mathfrak R)$ in Riemann coordinates}\label{bet-rich}
In Riemann coordinates the $\beta$-system \eq{beta1}-\eq{beta2} becomes
\begin{align}
	\del_i \gamma^j &= Z_{ji}^j \gamma^j-Z_{jj}^i \gamma^i
	\qquad \mbox{for $1\leq i\neq j\leq n$,}\qquad \quad \big(\del_i=\textstyle\pd{}{w^i}\big)\label{beta1rich}\\
	Z_{ik}^j \gamma^j &= Z_{jk}^i \gamma^i
	\qquad\quad\qquad\,\,\mbox{for $1\leq k\neq i\,<j \,\neq k\leq n$,}\label{beta2rich}
\end{align}
where
\beq\label{pull_backs}
	\gamma^i(w):=\beta^i\circ\rho^{-1}(w)
	\quad\mbox{and} \quad Z_{ij}^k(w):=\Gamma_{ij}^k\circ\rho^{-1}(w)\,.
\eeq
Symmetry and flatness of the connection $\nabla$  imply  the following relations among 
the $Z_{ij}^k(w)$:
\beq\label{T0Z} 
	Z_{ij}^k= Z_{ji}^k\,,\qquad\forall\, i,\, j,\, k,\,\mbox{ (symmetry) }
\eeq
\beq\label{R0Z} 
	\del_m\big(Z^j_{ik}\big)-\del_k\big(Z^j_{im}\big) 
	=\sum_{t=1}^n\big(Z^j_{tk}Z^t_{im}-Z^j_{tm} Z^t_{ik}\big)\,,\qquad\forall\, i,\, j,\, k,\, m\,\mbox{ (flatness)}.
\eeq

\bigskip

\subsection{Solution of $\bet(\R)$ for systems with no algebraic part}\label{rich_no_constr}
We now assume that
\beq\label{no_alg}
	Z_{ij}^k=0 \qquad\text{whenever $\eps(i,j,k)=1$.}
\eeq
In this case $\beta(\R)$ reduces to the pure PDE system \eq{beta1rich}.
Darboux's theorem (see Theorem 4.1 in \cite{jk1}) applies and shows that the solutions to $\beta(\R)$ depend
on $n$ functions of 1 variable. More precisely:

\begin{theorem}\label{rich_unconstr} 
	Given a $C^2$-smooth, rich frame $\{r_1,\dots,r_n\}$ in a neighborhood 
	of $\br{w}\in\RR^n$. Let $(w^1,\dots,w^n)$ be associated Riemann 
	invariants and assume the normalization \eq{norm}.
	Let the connection coefficients $Z_{ij}^k$ be defined by \eq{pull_backs} and 
	assume that $Z^k_{ij}=0$ whenever $\eps(i,j,k)=1$. 
	
	Then, for given functions $\varphi_i$, $i=1,\dots,n$, of one variable, there is a unique local 
	solution $\gamma^1(w),\dots,\gamma^n(w)$ to the $\beta$-system \eq{beta1rich} with
	\[\gamma^i(\br{w}^1,\dots,\br{w}^{i-1}, w^i, \br{w}^{i+1}, \dots \br{w}^n)=\varphi_i(w^i).\]
\end{theorem}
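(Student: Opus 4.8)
The plan is to recognize \eqref{beta1rich} as a first-order PDE system of Darboux (or Frobenius-with-parameters) type and apply the integrability machinery exactly as in the $\lambda$-system analysis of \cite{jk1}. First I would note that under the standing assumption \eqref{no_alg}, the algebraic part \eqref{beta2rich} is vacuous: every coefficient $Z_{jk}^i$ and $Z_{ik}^j$ appearing there has $\eps(\cdot,\cdot,\cdot)=1$, hence vanishes. So the entire content of $\beta(\R)$ is the pure PDE system
\[
	\del_i\gamma^j = Z_{ji}^j\,\gamma^j - Z_{jj}^i\,\gamma^i\qquad (i\neq j),
\]
together with the observation that $\del_i\gamma^i$ is \emph{not} constrained at all. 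This is the crucial structural feature: for each fixed index $j$, the quantity $\gamma^j$ has its derivatives $\del_i\gamma^j$ prescribed in all directions $i\neq j$, while $\del_j\gamma^j$ is free. That is precisely the hypothesis of Darboux's theorem in the form used in \cite{jk1} (Theorem 4.1 there), whose conclusion is that the general solution is parametrized by $n$ functions of one variable, the $i$-th function being the trace $\gamma^i|_{w^j=\br w^j,\ j\neq i}$.

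The key steps, in order, are: (1) verify that the right-hand sides of \eqref{beta1rich} are linear in the unknowns $\gamma^1,\dots,\gamma^n$ with coefficients $Z_{ji}^j, Z_{jj}^i$ that are smooth functions of $w$ alone (immediate from \eqref{pull_backs} and the smoothness of $\R$); (2) check the mixed-partials compatibility conditions for the overdetermined system, i.e.\ that $\del_k(\del_i\gamma^j)=\del_i(\del_k\gamma^j)$ holds identically modulo the equations themselves, for all distinct $i,k$ (and for the pair where one of them equals $j$, using that $\del_j\gamma^j$ is free so no condition arises there); (3) observe that these compatibility identities are exactly the flatness relations \eqref{R0Z} for the connection $\nabla$ restricted to the indices with $\eps\neq1$ vanishing, hence automatically satisfied — this is the payoff of the geometric setup, and I would cite \eqref{R0Z} directly rather than recompute; (4) invoke Darboux's theorem (Theorem 4.1 of \cite{jk1}) to conclude existence and uniqueness of the local solution with the prescribed traces $\gamma^i(\br w^1,\dots,w^i,\dots,\br w^n)=\vp_i(w^i)$.

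The main obstacle I anticipate is step (2)–(3): carefully writing out $\del_k\del_i\gamma^j$, substituting \eqref{beta1rich} for the first derivatives that appear after differentiating the coefficients, and matching the resulting expression against the flatness identities \eqref{R0Z} with the simplifications forced by \eqref{no_alg} and symmetry \eqref{T0Z}. One has to be attentive to the case distinctions — whether $k$, $i$, $j$ are all distinct, or whether $k=j$ or $i=j$ — since in the latter cases the "missing" equation for $\del_j\gamma^j$ means the naive cross-differentiation does not even make sense and must be handled by noting that $\gamma^j$'s value along the $w^j$-axis is genuinely free. Once the compatibility is reduced to \eqref{R0Z}, the rest is a direct citation. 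I would also remark that this is the same mechanism by which the analogous statement for $\lambda(\R)$ was obtained in \cite{jk1}, so the proof can be kept brief by referring to that argument.
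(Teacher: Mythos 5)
Your proposal is correct and follows essentially the same route as the paper: identify \eqref{beta1rich} as a Darboux-type system (each $\gamma^j$ has $\del_i\gamma^j$ prescribed for $i\neq j$ with $\del_j\gamma^j$ free), reduce the cross-differentiation compatibility conditions to identities that follow from flatness \eqref{R0Z}, symmetry \eqref{T0Z}, and the assumption $Z^k_{ij}=0$ for $\eps(i,j,k)=1$, and then invoke Darboux's theorem from \cite{jk1}. The only caveat is that the compatibility conditions are not literally the flatness relations but require the explicit computation you flag as the main obstacle (the paper carries it out by showing the coefficients of $\gamma^j$, $\gamma^k$, $\gamma^m$ in the resulting linear relation each vanish), so the work you anticipate in steps (2)--(3) is exactly the content of the paper's proof.
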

\begin{proof}
The compatibility condition required by Darboux's theorem is that the mixed second order 
partial derivatives calculated from \eq{beta1rich} should agree. That is, we need to verify that
\beq\label{compat}
	\del_k\big(Z_{mj}^j \gamma^j-Z_{jj}^m \gamma^m\big) =
	\del_m\big(Z_{kj}^j \gamma^j-Z_{jj}^k \gamma^k\big) 
\eeq
holds whenever $\eps(j,k,m)=1$, when the derivatives are calculated according to \eq{beta1rich}.
Performing the differentiations, applying \eq{beta1rich}, and collecting terms, we obtain the conditions
\begin{align}
	\big(\del_m Z_{kj}^j-\del_k Z_{mj}^j \big)\gamma^j 
	&- \big(\del_m Z_{jj}^k + Z_{jj}^kZ_{mk}^k + Z_{jj}^mZ_{mm}^k -Z_{mj}^j Z_{jj}^k\big)\gamma^k\nn\\
	&+  \big(\del_k Z_{jj}^m + Z_{jj}^mZ_{km}^m + Z_{jj}^kZ_{kk}^m -Z_{kj}^j Z_{jj}^m\big)\gamma^m=0\,,
	\qquad\text{whenever $\eps(j,k,m)=1$.} \label{coeffc}
\end{align}
We shall show that  the coefficients of $\gamma^j$, $\gamma^k$, and $\gamma^m$ vanish identically 
due to flatness and symmetry. First, due to symmetry and to \eq{R0Z} with $i=j$, we have
\[\del_m Z_{kj}^j-\del_k Z_{mj}^j = \del_m Z_{jk}^j-\del_k Z_{jm}^j
=\sum_{t=1}^n \big(Z^j_{tk}Z^t_{jm}-Z^j_{tm} Z^t_{jk}\big)\,.\]
Recalling \eq{no_alg} and that $\eps(j,k,m)=1$ we conclude that the latter sum is zero. This shows 
that the coefficient of $\gamma^j$ in \eq{coeffc} vanishes identically. 

Next, using \eq{R0Z} with $i=k$ and then interchanging $j$ and $k$ yields the general identity
\[\del_m Z_{jj}^k -\del_j Z_{jm}^k=\sum_{t=1}^n \big(Z^k_{tj}Z^t_{jm}-Z^k_{tm} Z^t_{jj}\big)\,.\]
Then apply this to the case when $\eps(j,k,m)=1$ to get that
\beq\label{coeffc1}
	0=\del_m Z_{jj}^k - \sum_{t=1}^n \big(Z^k_{tj}Z^t_{jm}-Z^k_{tm} Z^t_{jj}\big)
	= \del_m Z_{jj}^k -Z_{jj}^k Z_{jm}^j +Z_{km}^k Z_{jj}^k +Z_{mm}^k Z_{jj}^m\,.
\eeq
Applying symmetry \eq{T0Z} in the last expression shows that the coefficient of $\gamma^k$ 
in \eq{coeffc} vanishes identically.

Finally, since we need to verify that the coefficient of $\gamma^m$ vanishes identically for all triples
$j,k,m$ with $\eps(j,k,m)=1$, we may as well interchange $m$ and $k$ in the coefficient of 
$\gamma^m$ in \eq{coeffc}. The result is the right-hand side in \eq{coeffc1}, which vanishes.
\end{proof}

In Example~\ref{rich_rank0} we consider the rich, orthogonal frame whose Riemann coordinates 
are cylindrical coordinates in $\RR^3$. According to the analysis in Section 4.2.1 of \cite{jk1}, the 
algebraic part of the $\lam$-system corresponding to any rich, orthogonal frame is necessarily trivial. 
By Observation~\ref{zero-rank} above the same is true for the algebraic part of the $\bet$-system.
This example illustrates Theorem~\ref{rich_unconstr}, as well as Observations~\ref{orthogonal} 
and~\ref{scaling}.

\begin{remark}
	We note that this result applies to any frame when $n=2$ since in that case
	there are no algebraic constraints. Theorem \ref{rich_unconstr} is well-known in the setting
	of a given, strictly hyperbolic system \eq{cons}, see \cites{cl,ser2,ts1}.
\end{remark}


\section{Analysis of $\beta(\mathfrak R)$ for $n=3$}\label{n=3}
We now restrict attention to frames on open subsets of $\RR^3$. Before considering the 
associated $\beta$-systems we recall the breakdown of possible cases for the 
$\lam$-systems. In \cite{jk1}, it was demonstrated that there are 
essentially only four possibilities in this case, as described in the following proposition:  
\begin{proposition}[\cite{jk1}]\label{n=3lambda}
Given a smooth frame $\R$ on $\Omega\subset\RR^3$, the solution set of $\lambda(\mathfrak R)$
is described by one of the following cases:
\begin{itemize}
	\item [I:] if $\rank[\eq{sev2}]=0$ (no algebraic constraints) then $\R$ is necessarily rich and 
	a general solution of $\lambda(\mathfrak R)$ depends on 3 
	functions of 1 variable. There are strictly hyperbolic solutions in this class. 
	\item [II:] if $\rank[\eq{sev2}]=1$ (a single algebraic constraint) then there are two possibilities:
	\begin{itemize}
		\item[IIa.] All three $\lam^i$ appear in the algebraic constraint.
	 	The solution of  the $\lam$-system is either trivial or depends on 2 arbitrary constants. 
		In the latter case, there are strictly hyperbolic systems in the family.
	 	There are no rich systems in class IIa.
		\item[IIb.] Exactly two $\lam^i$ appear in the algebraic constraint.
		Two $\lam^i$ coincide and the general solution is either trivial or depends on 1 arbitrary 
		function of 1 variables and 1 constant. There are no strictly hyperbolic systems, 
		but there are rich systems, in class IIb.
	\end{itemize}
	\item [III:] if $\rank[\eq{sev2}]=2$ then there are only trivial solutions 
	$\lam^1=\lam^2=\lam^3\equiv$ constant. 
\end{itemize}
Furthermore, each of the above possibilities can occur.
\end{proposition}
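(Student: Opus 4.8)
The result stated here is from \cite{jk1}; I would prove it by the following route, which also foreshadows the $\bet$-system analysis of Sections~\ref{n=3}--\ref{proof}. First rewrite the algebraic part \eq{sev2}: collecting terms and using $c_{ij}^k=\Gamma_{ij}^k-\Gamma_{ji}^k$, each of its equations becomes $\lam^i\Gamma_{ji}^k-\lam^j\Gamma_{ij}^k+\lam^k c_{ij}^k=0$, so \eq{sev2} is a homogeneous linear system $M(u)(\lam^1,\lam^2,\lam^3)^T=0$ with a $3\times3$ coefficient matrix $M(u)$ built from the $\Gamma_{ij}^k$ with $\eps(i,j,k)=1$. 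Since the constant fluxes \eq{trivial_flux} always solve \eq{sev1}--\eq{sev2}, the vector $(1,1,1)^T$ lies in $\ker M(u)$, so $\rank M(u)\le 2$. Restricting to a neighborhood on which $\rank M$ is constant (an open dense condition) yields exactly the three cases I, II, III according to $\rank M=0,1,2$.

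If $\rank M\equiv2$ then $\ker M(u)=\RR(1,1,1)^T$, so any solution of the algebraic part satisfies $\lam^1\equiv\lam^2\equiv\lam^3\equiv\phi$; feeding this into \eq{sev1} forces $r_i(\phi)=0$ for every $i$ (each index occurs as some ``$i$'' with $i\ne j$), hence $\phi$ is constant and only trivial solutions survive (Case III). If $\rank M\equiv0$ then $\Gamma_{ij}^k=0$ whenever $\eps(i,j,k)=1$, and by the symmetry \eq{T0} of $\nabla$ also $c_{ij}^k=0$ for such triples, so $\R$ is rich (Definition~\ref{richness}; cf.\ Observation~\ref{zero-rank}). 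Passing to Riemann coordinates as in Section~\ref{rich}, $\lam(\R)$ collapses to the pure PDE system \eq{sev1}, whose Darboux compatibility (agreement of mixed second derivatives) follows from flatness \eq{R0Z} and symmetry \eq{T0Z} by a computation mirroring the proof of Theorem~\ref{rich_unconstr}. Darboux's theorem (Theorem~4.1 in \cite{jk1}) then gives a general solution fixed by $3$ functions of one variable, namely the restrictions of $\lam^i$ to the $w^i$-coordinate line through the base point; choosing these three functions to take distinct values at the base point produces strictly hyperbolic solutions by continuity (Case I).

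If $\rank M\equiv1$ there is one relation $a_1\lam^1+a_2\lam^2+a_3\lam^3=0$; as it must hold at $(1,1,1)$ we have $a_1+a_2+a_3=0$ with not all $a_i$ zero, and the two sub-cases are (IIa) all $a_i\ne0$ and (IIb) exactly two $a_i\ne0$, in which case $\sum a_i=0$ makes the relation read $\lam^i=\lam^j$. In IIa, solving the relation for $\lam^3$ and differentiating it along each $r_i$ while inserting \eq{sev1} also determines $r_i(\lam^i)$, so one obtains a complete total-differential (Frobenius) system for $(\lam^1,\lam^2)$ whose solution, if consistent, is pinned down by the two values $\lam^1(\bar u),\lam^2(\bar u)$: either the Frobenius integrability conditions hold identically (a $2$-constant family, with strictly hyperbolic members since the $\lam^i$ are not all equal), or they produce a further linear relation among the $\lam^i$, which also holds at $(1,1,1)$ and is independent of the rank-$1$ relation, so the pair has rank $2$ and only trivial solutions remain by the Case III argument. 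No rich frame lies in IIa: by \eq{T0} and richness every equation of \eq{sev2} takes the form $(\lam^i-\lam^j)\Gamma_{ij}^k=0$, and the covectors $(1,-1,0),(1,0,-1),(0,1,-1)$ are pairwise non-proportional, so $\rank M=1$ forces exactly one of $\Gamma_{12}^3,\Gamma_{13}^2,\Gamma_{23}^1$ to be nonzero, whence only two of the $\lam^i$ enter — that is IIb. In IIb the relation $\lam^i=\lam^j$ holds for every solution, so no solution is strictly hyperbolic; writing $\mu:=\lam^i=\lam^j$ and $\nu:=\lam^k$, the system \eq{sev1} makes $\mu$ constant along two frame directions and expresses the remaining derivatives of $\mu$ and $\nu$ through $\mu,\nu$ and the $\Gamma$'s, and a compatibility check via flatness \eq{R0} and symmetry \eq{T0} shows the system is either overdetermined, forcing $\mu\equiv\nu\equiv$ const, or integrable with general solution parametrized by one arbitrary function of one variable (for $\mu$) and one arbitrary constant; rich frames do occur in IIb.

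That each case is realized is settled by explicit frames: a rich frame with some $\Gamma_{ij}^j\ne0$ gives Case I, a generic (e.g.\ suitable constant-coefficient) frame gives Case III, and the frames constructed in \cite{jk1} realize IIa and IIb. I expect the main obstacle to be Case II: deciding precisely when the reduced total-differential systems are integrable, and carrying out the bookkeeping that distinguishes ``two arbitrary constants'' (IIa) from ``one arbitrary function of one variable plus one constant'' (IIb) while ruling out all intermediate counts — this is exactly where flatness and symmetry of $\nabla$ must be exploited most carefully, as in the $\bet$-system analysis of Section~\ref{proof}.
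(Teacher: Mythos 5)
The paper itself offers no proof of this proposition---it is quoted from \cite{jk1}---so there is nothing internal to compare against; judged on its own terms, your reconstruction is sound and follows exactly the machinery the paper sets up: the observation that $(1,1,1)^T\in\ker A_\lam$ (so $\rank\le 2$), the breakdown by rank of the matrix in \eq{Al}, the identification of rank $0$ with richness via \eq{T0}, and Frobenius/Darboux integrability checked against flatness and symmetry, mirroring what Sections~\ref{n=3}--\ref{proof} do for the $\bet$-system. Your dichotomies in Cases IIa and III are argued correctly (any extra integrability relation must annihilate $(1,1,1)$, hence forces all $\lam^i$ equal and then constant), and your rank argument excluding rich frames from IIa is the same one underlying \eq{Al-Ab-rich}. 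The only loose ends are minor: in IIb the arbitrary function of one variable is carried by the eigenvalue data along the $r_k$-direction (with the constant absorbable into it, as in Example~\ref{rich_rank1_details}), and the verification that no intermediate parameter counts occur in IIb is precisely the flatness/symmetry bookkeeping you correctly flag as the remaining work.
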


%
In our analysis of the $\beta$-system, Case I in Proposition \ref{n=3lambda} 
is covered by the analysis in Section \ref{rich_no_constr}.
We consider Case II in sections \ref{rich_rank=1_n=3} and \ref{non-rich_rank=1_n=3} below; 
in particular this covers the case of rich frames with one algebraic constraint. 
The various sub-cases in this latter category indicate 
that the analysis of rich systems of size $n\geq 4$ with algebraic constraints (i.e.\ the rich 
systems {\em not} treated in Section \ref{rich_no_constr}), is quite involved. 
Finally, Case III is largely irrelevant as it admits only trivial 
linear systems $u_t+\bar\lam u_x=0$, for which any scalar field is an 
extension. However, it does highlight the fact that our approach of characterizing 
extensions that satisfy \eq{cond4} due to orthogonality alone, may not provide
{\em all} extensions for {\em all} systems with a given eigen-frame $\R$; 
see Example \ref{strict_hyp_not_all}.

\subsection{The algebraic parts of $\lambda(\mathfrak R)$ and $\beta(\mathfrak R)$ for $n=3$}
\label{alg_n3}
Before analyzing $\beta(\mathfrak R)$ we establish a useful relationship between the 
algebraic parts of $\lambda(\mathfrak R)$ and $\beta(\mathfrak R)$. 
The former is given by \eq{sev2}:
\beq\label{Al}
	A_\lambda\, (\lam^1,\lam^2,\lam^3)^T=0\,, \qquad\text{where}\qquad A_\lam=
	\left[\begin{array}{ccc}
	c_{23}^1 &  \Gamma_{32}^1 & -\Gamma_{23}^1 \\
  	\Gamma_{31}^2&  c_{13}^2 &    - \Gamma_{13}^2\\
    	\Gamma_{21}^3&-\Gamma_{12}^3   &c_{12}^3   
	\end{array}\right].
 \eeq
For $n=3$ the algebraic part \eq{beta2} of the $\beta$-system is
\beq\label{Ab}
	A_\bet\, (\bet^1,\bet^2,\bet^3)^T=0\qquad\text{where}\qquad 
	A_\bet=\left[
	\begin{array}{ccc}
	c_{23}^1 &  -\Gamma_{31}^2 & \Gamma_{21}^3 \\
	- \Gamma_{32}^1&  c_{13}^2 &     \Gamma_{12}^3\\
	- \Gamma_{23}^1&\Gamma_{13}^2   &c_{12}^3   
	\end{array}\right].
 \eeq
A calculation shows that
\beq\label{Ab-Al} 
	A_\lam=\left[
	\begin{array}{ccc}
	  1&0   & 0  \\
	  0&-1   &0   \\
	  0& 0  &1   
	\end{array}
	\right]\, A_\beta^T\,\left[\begin{array}{ccc}
	  1&0   & 0  \\
	  0&-1   &0   \\
	  0& 0  &1   
	\end{array}
	\right]\,.
\eeq
Observing that $\rank(A_\lambda)\leq 2$, we have:
\begin{proposition}\label{n=3_ranks} 
	For $n=3$ the ranks of the algebraic parts of $\lambda(\mathfrak R)$ 
	and $\beta(\mathfrak R)$ are equal and $\leq 2$.
\end{proposition}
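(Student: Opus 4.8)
The plan is to read off both assertions directly from the explicit forms \eq{Al} and \eq{Ab} of $A_\lambda$ and $A_\beta$, together with the identity \eq{Ab-Al}. First I would note that the conjugating matrix $P:=\diag[1,-1,1]$ occurring in \eq{Ab-Al} satisfies $P^2=I$, so in particular $P$ is invertible. Since left or right multiplication by an invertible matrix leaves the rank unchanged, and transposition preserves rank, \eq{Ab-Al} immediately gives $\rank A_\lambda=\rank\big(P A_\beta^T P\big)=\rank A_\beta^T=\rank A_\beta$. This settles the equality of ranks with no further work.

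For the bound $\rank A_\beta=\rank A_\lambda\leq 2$ I would exhibit a nonzero kernel vector of $A_\lambda$. Substituting $c_{23}^1=\Gamma_{23}^1-\Gamma_{32}^1$, $c_{13}^2=\Gamma_{13}^2-\Gamma_{31}^2$, and $c_{12}^3=\Gamma_{12}^3-\Gamma_{21}^3$ into \eq{Al}, one checks that the three entries in each row of $A_\lambda$ sum to zero; hence $A_\lambda(1,1,1)^T=0$ and $\rank A_\lambda\leq 2$. Conceptually this row-sum identity is just the algebraic shadow of the fact that $\lambda(\mathfrak R)$ always admits the trivial solutions $\lambda^1=\lambda^2=\lambda^3$. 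Pushed across \eq{Ab-Al}, the counterpart for $\beta$ is the left relation $(1,-1,1)A_\beta=0$, i.e.\ the first minus the second plus the third row of $A_\beta$ vanishes, which can also be verified directly from \eq{Ab}.

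There is essentially no obstacle in this argument: once \eq{Ab-Al} is in hand, the proposition is pure linear algebra. The only substantive input is \eq{Ab-Al} itself --- a straightforward entry-by-entry comparison of \eq{Al} and \eq{Ab} using $c_{ij}^k=\Gamma_{ij}^k-\Gamma_{ji}^k$ --- and the only place requiring care is the bookkeeping of signs introduced by the diagonal factor $\diag[1,-1,1]$ on both sides and by that same relation for the $c$'s.
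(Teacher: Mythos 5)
Your proof is correct and follows essentially the same route as the paper: the paper likewise deduces equality of ranks directly from the conjugation identity \eq{Ab-Al} and then simply observes $\rank(A_\lambda)\leq 2$. Your explicit justification of that observation via the vanishing row sums (equivalently, the kernel vector $(1,1,1)^T$ reflecting the trivial solutions $\lambda^1=\lambda^2=\lambda^3$) correctly fills in the one step the paper leaves implicit.
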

\begin{remark}\label{n=4_ranks}
	Example \ref{unequal_rank} shows that this result does not generalize to $n\geq 4$.
\end{remark}
%

\subsection{Rich frames with algebraic part of $\bet(\R)$ of rank 1}\label{rich_rank=1_n=3}
Given a rich frame $\mathfrak R=\{r_1,\,r_2,\,r_3\}$, we make a choice $w=(w^1,w^2,w^3)$ 
of Riemann coordinates  and scale the frame according to \eq{norm}. The matrices $A_\lam$ and 
$A_\beta$ are then 
\beq\label{Al-Ab-rich}
	A_\lam=\left[\begin{array}{ccc}
	0 &  Z_{23}^1 & -Z_{23}^1 \\
	  Z_{13}^2&  0 &    - Z_{13}^2\\
	    Z_{12}^3&-Z_{12}^3   &0   
	\end{array}\right]
	\qquad\text{and}\qquad 
	A_\bet=\left[\begin{array}{ccc}
	0 &  -Z_{13}^2 & Z_{12}^3 \\
	 - Z_{23}^1&  0 &     Z_{12}^3\\
	   - Z_{23}^1&Z_{13}^2   &0   
	\end{array}\right],
\eeq
where $Z_{ij}^k$ are given by \eq{pull_backs}${}_2$. Assume now that $\rank\eq{beta2rich}=1$,
i.e.\ $\rank(A_\lam)=\rank(A_\beta)=1$, and observe that this is the case if and only if exactly two 
of the functions $Z_{12}^3$, $Z_{13}^2$ and $Z_{23}^1$ vanish identically. 
Without loss of generality we assume that  
\beq\label{assump1}
	Z_{23}^1\neq 0,\qquad\text{while}\qquad Z_{12}^3=Z_{13}^2\equiv 0\,.
\eeq
\begin{remark}
	In this case the algebraic part \eq{sev2} of the $\lam$-system requires 
	$\lam^1\equiv \lam^2$ and, according to Proposition~\ref{n=3lambda} 
	there are only two possibilities for the solutions of the $\lam$-system: 
\begin{itemize} 
	\item only the trivial solution: 
	$\lam^1(u)=\lam^2(u)=\lam^3(u)\equiv \hat \lam\in\RR$ 
	(Example~\ref{rich_rank1_details});
	\item $\lam^1(u)=\lam^2(u)\neq \lam^3(u)$ and the general solution 
	depends on 1 arbitrary function of one variable and one arbitrary constant, 
	which in some examples may be absorbed in the arbitrary function. 
	(See Examples~\ref{rich_rank1_details}).
\end{itemize}
\end{remark}
We proceed to show that for ``rich, rank 1" frames, 
there are exactly three possibilities for the solution set of the  $\bet$-system. 
In all cases the corresponding extensions are degenerate. 
\begin{theorem}\label{rich_rank_1_thm}
	Given a rich frame $\R=\{r_1,\,r_2,\,r_3\}$ on $\Omega\subset\RR^3$,
	consider the $\beta$-system \eq{beta1rich}-\eq{beta2rich} expressed in 
	Riemann coordinates. Assume that the algebraic part \eq{beta2rich} of 
	$\beta(\R)$ has rank 1. Then there are three possibilities for the solution 
	set of the $\beta$-system:
	 \begin{itemize}
		\item[(1)] Only the trivial solution: $\bet^1=\bet^2=\bet^3\equiv 0$
		\item[(2)] Exactly two $\beta^i$ are zero and the third depends on 1 arbitrary 
		function of 1 variable.
		\item[(3)] Exactly one $\beta^i$ is zero and the other two 
		$\beta^i$ depend on 2 arbitrary functions of 1 variable.
	 \end{itemize}
\end{theorem}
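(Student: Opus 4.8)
The plan is to work entirely in Riemann coordinates, using the reduced $\beta$-system \eqref{beta1rich}--\eqref{beta2rich} together with the normalization \eqref{assump1}, i.e.\ $Z_{12}^3=Z_{13}^2\equiv 0$ while $Z_{23}^1\not\equiv 0$. First I would write out explicitly what the rank-$1$ algebraic part \eqref{beta2rich} says: the only surviving equation is $Z_{23}^1\gamma^1=0$ at points where $Z_{23}^1\neq 0$ (the other instances of \eqref{beta2rich} involve only the vanishing $Z$'s, hence are vacuous). Thus on the open set where $Z_{23}^1\neq 0$ we are forced to have $\gamma^1\equiv 0$; by continuity $\gamma^1\equiv 0$ on all of the (connected) neighborhood. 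This already pins down one of the three $\beta^i$ to be zero and explains why every solution is degenerate.

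Next I would substitute $\gamma^1\equiv 0$ into the differential part \eqref{beta1rich} and see which constraints propagate. Setting $j=1$ in \eqref{beta1rich} gives, for $i=2,3$, equations of the form $0=\partial_i\gamma^1 = Z_{1i}^1\gamma^1 - Z_{11}^i\gamma^i = -Z_{11}^i\gamma^i$, so wherever $Z_{11}^2\neq 0$ we are forced to $\gamma^2\equiv 0$, and wherever $Z_{11}^3\neq 0$ we are forced to $\gamma^3\equiv 0$. This produces the case split: (1) if both $Z_{11}^2$ and $Z_{11}^3$ are somewhere nonzero, all three $\gamma^i$ vanish — the trivial solution; (2) if exactly one of them, say $Z_{11}^3$, is identically zero while $Z_{11}^2\not\equiv 0$, then $\gamma^2\equiv 0$ and only $\gamma^3$ survives; (3) if both $Z_{11}^2$ and $Z_{11}^3$ are identically zero, then $\gamma^2$ and $\gamma^3$ are the surviving unknowns. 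In each surviving case I would then read off the remaining equations of \eqref{beta1rich} governing the nonzero $\gamma$'s and count the free data via Darboux's theorem, exactly as in the proof of Theorem~\ref{rich_unconstr}. In case (2) the single function $\gamma^3$ satisfies two first-order PDEs $\partial_i\gamma^3 = Z_{3i}^3\gamma^3$ ($i=1,2$, since the $\gamma^1,\gamma^2$ terms drop out), a determined overdetermined system whose compatibility is guaranteed by flatness \eqref{R0Z}, yielding a solution set parametrized by $1$ function of $1$ variable. In case (3) the pair $(\gamma^2,\gamma^3)$ satisfies the $2\times 2$ subsystem of \eqref{beta1rich} obtained by deleting index $1$, again closing up by flatness, and Darboux gives $2$ functions of $1$ variable.

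The main obstacle I anticipate is verifying that the compatibility (integrability) conditions for each of the reduced PDE subsystems hold identically as consequences of the symmetry \eqref{T0Z} and flatness \eqref{R0Z} relations among the $Z_{ij}^k$, now specialized to the constraints $Z_{12}^3=Z_{13}^2\equiv 0$ and $\gamma^1\equiv 0$. This is the same kind of computation as in Theorem~\ref{rich_unconstr} but one must be careful that the vanishing of $\gamma^1$ and of two $Z$'s does not create spurious obstructions — in particular one should check that the flatness identity \eqref{R0Z} with appropriate index choices forces, e.g., $\partial_m Z_{jj}^k$ to have the form needed so that the mixed partials of the surviving $\gamma$'s agree. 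A secondary point requiring care is the consistency between the case-defining hypotheses: one must confirm that "$Z_{11}^i\equiv 0$" is a coordinate-free condition compatible with richness and with \eqref{assump1}, so that the trichotomy is exhaustive and the cases are genuinely distinct; the accompanying examples (Examples~\ref{rich_rank1_details}) are presumably invoked to show each case is realized.
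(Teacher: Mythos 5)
Your strategy is the same as the paper's: use \eq{assump1} to reduce the algebraic part to $\gamma^1\equiv 0$, then split on the identical vanishing of $Z_{11}^2$ and $Z_{11}^3$ (which enter through the equations $0=\del_i\gamma^1=-Z_{11}^i\gamma^i$), and finish each branch with Darboux's theorem, verifying compatibility from symmetry \eq{T0Z} and flatness \eq{R0Z}. However, there is one concrete gap in your case (2). When a $\gamma^i$ is forced to vanish, the equations of \eq{beta1rich} that prescribed \emph{its} derivatives do not simply ``drop out'': they turn into additional \emph{algebraic} constraints on the surviving unknowns. Concretely, with $Z_{11}^2\not\equiv 0$ and hence $\gamma^2\equiv 0$, the equation $\del_3\gamma^2=Z_{32}^2\gamma^2-Z_{22}^3\gamma^3$ (i.e.\ \eq{beta1rich} with $j=2$, $i=3$, cf.\ \eq{IIBrich4}) becomes $0=Z_{22}^3\,\gamma^3$. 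If $Z_{22}^3\not\equiv 0$ this forces $\gamma^3\equiv 0$ and the solution set is trivial, not ``parametrized by one function of one variable'' as you assert; only when $Z_{22}^3\equiv 0$ does the residual system $\del_1\gamma^3=Z_{13}^3\gamma^3$, $\del_2\gamma^3=Z_{23}^3\gamma^3$ yield one free function. The symmetric constraint $0=Z_{33}^2\,\gamma^2$ arises in the mirrored sub-case. The paper's proof handles exactly this extra dichotomy.

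This omission does not endanger the statement of the theorem, since the extra outcome in your case (2) is the trivial solution, which is already possibility (1) of the trichotomy; but as written your proof asserts a false intermediate claim and so does not establish the correspondence you describe. Your remaining branches are sound: when both $Z_{11}^2$ and $Z_{11}^3$ vanish identically no $\gamma^i$ beyond $\gamma^1$ is annihilated, so no residual algebraic constraints appear, and the coupled four-equation system for $(\gamma^2,\gamma^3)$ is of Darboux type with integrability following from \eq{T0Z}, \eq{R0Z}, and \eq{assump1}, giving the two free functions of case (3) — exactly as in the paper.
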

\begin{proof}
As in Section \ref{rich}, we let $\gamma^i(w(u))=\beta^i(u)$ and  
$Z_{ij}^k(w(u))=\Gamma_{ij}^k(u)$.  Then the algebraic part \eq{beta2rich} of the 
$\bet$-system is  equivalent to
\beq\label{triv_bet1}
	\gamma^1\equiv0\,,
\eeq
showing that there are no non-degenerate extensions in this case.
The differential part \eq{beta1rich} of the $\bet$-system reduces to:
\begin{align}
	\label{IIBrich1} 0 &= \gamma^2\,Z_{11}^2\\
	\label{IIBrich2} 0&= \gamma^3\,Z_{11}^3\\
	\label {IIBrich3}	\del_1(\gamma^2) &= \gamma^2\,Z_{12}^2\\
	\label {IIBrich4}	\del_3(\gamma^2) &= \gamma^2\,Z_{32}^2 -\gamma^3\, Z_{22}^3\\
	\label {IIBrich5}	\del_1(\gamma^3) &= \gamma^3\,Z_{13}^3\\
	\label {IIBrich6} \del_2(\gamma^3) &= \gamma^3\,Z_{23}^3 -\gamma^2\, Z_{33}^2\,.
\end{align}
We proceed to consider the various possibilities:
\begin{enumerate}
\item[\bf a.] 
If $Z_{11}^2\neq 0$ and $Z_{11}^3\neq 0$ then $\gamma^2\equiv 0$ and 
$\gamma^3\equiv 0$. Together with \eq{triv_bet1} this shows that the $\bet$-system 
has only the trivial solution $\beta^1=\beta^2=\beta^3\equiv 0$ in this case.
\item[\bf b.] 
If  $Z_{11}^2\neq 0$ and $Z_{11}^3\equiv 0$ then from \eq{IIBrich1} it follows that 
$\gamma^2\equiv 0$,  and the system \eq{IIBrich1}-\eq{IIBrich6} reduces to:
\begin{align}
	\label {IIBrich4-b} 0&= \gamma^3\, Z_{22}^3\\
	\label {IIBrich5-b} \del_1(\gamma^3) &= \gamma^3\,Z_{13}^3\\
	\label {IIBrich6-b} \del_2(\gamma^3) &= \gamma^3\,Z_{23}^3.
\end{align}
If $Z_{22}^3\neq 0$ then \eq{IIBrich4-b} implies $\gamma^3 =0$ and the 
$\beta$-system has only the trivial solution $\beta^1=\beta^2=\beta^3\equiv 0$. 
Otherwise the system reduces to \eq{IIBrich5-b}-\eq{IIBrich6-b}, which we claim is
a compatible system. The only integrability condition is $\del_1(\del_2 \gamma^3)
=\del_2(\del_1 \gamma^3)$ which, upon using \eq{IIBrich5-b}-\eq{IIBrich6-b}, 
amounts to the condition that $\del_2 Z_{13}^3=\del_1 Z_{23}^3$. 
This is indeed the case by flatness \eq{R0Z}, symmetry \eq{T0Z}, and the 
assumptions \eq{assump1} and $Z_{11}^3\equiv 0$. We may therefore apply Darboux's 
theorem (Theorem 4.1 in \cite{jk1}) and conclude that $\gamma^3$ depends on 
1 arbitrary function of 1 variable.  
\item[\bf c.] 
If  $Z_{11}^2= 0$ and $Z_{11}^3\neq 0$ then from \eq{IIBrich2} it follows that 
$\gamma^3\equiv 0$, while $\gamma^2$ depends on 1 arbitrary function of 1 variable. 
This case is equivalent to Case {\bf b} by permutation of the third and the 
second eigenvectors (recall the symmetry \eq{T0Z} for rich systems).
\item[\bf d.] 
If $Z_{11}^2\equiv  Z_{11}^3\equiv 0$, then the differential system reduces to 
\eq{IIBrich3}-\eq{IIBrich6} for the unknown functions $\gamma^2$ and $\gamma^3$.
The two integrability conditions are $\del_3(\del_1 \gamma^2)=\del_1(\del_3 \gamma^2)$
and $\del_2(\del_1 \gamma^3)=\del_1(\del_2 \gamma^3)$, and these are satisfied as 
identities thanks symmetry, flatness, and the current assumptions. We apply 
Darboux's theorem and conclude that $\gamma^2$ depends on 1 
function of 1 variable, as does $\gamma ^3$.
Example~\ref{rich_rank1_details} belongs to this category.
\end{enumerate}

\bigskip

\end{proof}
%
%
%
%
%

\subsection{Non-rich frames with algebraic part of $\bet(\R)$ of rank 1}\label{non-rich_rank=1_n=3}
The following theorem lists all possible degrees of freedom that the solutions of the 
$\bet$-system may enjoy  in this case.
\begin{theorem}\label{thm_non-rich_n=3_short}
Given a non-rich frame $\R=\{r_1,\,r_2,\,r_3\}$ on $\Omega\subset\RR^3$
for which the algebraic part \eq{beta2} of $\beta(\R)$ has rank 1. Then the 
solution set of the $\beta$-system \eq{beta1}-\eq{beta2} is covered by one
of the following possibilities, all of which can occur:
\begin{itemize}
	\item[\bf (1)] Only the trivial solution: $\bet^1=\bet^2=\bet^3\equiv0$
	\item[\bf (2)] Exactly two $\beta^i$ are zero and the third depends on 1 arbitrary 
	function of one  variable.
	\item[\bf (3)] Exactly one $\beta^i$ is zero and the other two  $\beta^i$ depend on
	 \begin{itemize}
	\item[\bf (3a)] 2 arbitrary functions of one variable.
	\item[\bf (3b)] 1 common  arbitrary constant.
        \end{itemize}
        \item[\bf (4)]  There is a non-degenrate solution (all $\beta^i$ are non-zero) which depends on
        \begin{itemize}
	\item[\bf (4a)]  1 arbitrary function of one variable and 1 arbitrary constant.
	\item[\bf (4b)] 2 arbitrary constants.
	\item[\bf (4c)] 1 arbitrary constant.
        \end{itemize}
 \end{itemize}
\end{theorem}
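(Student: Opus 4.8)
The plan is to follow the same template as the proof of Theorem~\ref{rich_rank_1_thm}, but now working directly with the frame $\R$ and the raw form \eq{beta1}--\eq{beta2} of the $\beta$-system, since a non-rich frame carries no coordinate system of Riemann invariants. The first step is to exploit the hypothesis $\rank\eq{beta2}=1$: the algebraic part of $\beta(\R)$ is then equivalent to a single scalar equation $a_1\bet^1+a_2\bet^2+a_3\bet^3=0$, where $(a_1,a_2,a_3)$ is a nowhere-vanishing multiple of any nonzero row of the matrix $A_\bet$ of \eq{Ab}; in addition, the vanishing of all $2\times 2$ minors of $A_\bet$ records pointwise identities among the coefficients $\Gamma^k_{ij}$ and $c^k_{ij}$ with $\eps(i,j,k)=1$ that will be needed below. (By Proposition~\ref{n=3_ranks} we also have $\rank A_\lam=1$, consistent with Cases IIa/IIb of Proposition~\ref{n=3lambda}.) After relabeling, there are three mutually exclusive possibilities for the support of $(a_1,a_2,a_3)$: exactly two entries vanish, in which case the constraint forces $\bet^{i_0}\equiv 0$ for some $i_0$; exactly one entry vanishes, in which case the constraint expresses one $\bet^j$ as a known multiple of another; or all three are nonzero, in which case the constraint expresses one $\bet^j$ as a known linear combination of the other two.

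The second step is to substitute this relation into the differential part \eq{beta1}. Differentiating the algebraic relation along each frame vector $r_i$ and eliminating the derivatives $r_i(\bet^\bullet)$ by means of \eq{beta1} (which prescribes $r_i(\bet^j)$ only for $i\neq j$, leaving $r_i(\bet^i)$ free) closes some of the free directions and, in the ``cross'' directions, produces in general a fresh pointwise-linear relation among the $\bet^i$. Whether this relation is vacuous or not determines the branching: either the system closes up, or an additional component $\bet^i$ is forced to vanish and one recurses on the smaller system. This recursion is exactly what generates the list in the statement -- it can terminate in the all-zero solution (case (1)), in a single surviving $\bet^i$ governed by the residual two-equation PDE subsystem of \eq{beta1} (case (2)), in two surviving $\bet^i$ (cases (3a)--(3b)), or in a non-degenerate triple (cases (4a)--(4c)). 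The sub-labels a/b/c encode, for the residual PDE system, whether one unknown is left free along a single direction (Darboux: an arbitrary function of one variable) or the system is fully determined (Frobenius: finitely many constants), and how many such parameters occur.

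The third step is, for each branch, to verify the relevant integrability conditions. A closed residual system expresses each $r_i(\bet^p)$ as a linear combination of the surviving components, and its compatibility amounts to checking the identity $r_ir_j(\bet^p)-r_jr_i(\bet^p)=[r_i,r_j]\bet^p=\sum_k c^k_{ij}\,r_k(\bet^p)$ once \eq{beta1} is applied to all three terms. As in the proof of Theorem~\ref{rich_rank_1_thm}, these identities follow from flatness \eq{R0} and symmetry \eq{T0} of the connection $\nabla$ together with the rank-$1$ minor relations extracted in the first step. Wherever the residual system is determined one invokes Frobenius' theorem to obtain a solution set cut out by finitely many constants, and counting the admissible Cauchy data gives the $1$ or $2$ constants of (3b), (4b), (4c); wherever one unknown remains free along a single direction one invokes Darboux's theorem (Theorem~4.1 in \cite{jk1}) to obtain the arbitrary function of one variable of (2), (3a), (4a).

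Finally, to see that each possibility is actually realized, one exhibits an explicit non-rich frame for each of (1), (2), (3a), (3b), (4a), (4b), (4c), deferring the computations to Section~\ref{exs}. I expect the main obstacle to be the organization of the case tree together with the verification, in the non-rich setting, that the Frobenius compatibility conditions hold identically: unlike the rich case there is no passage to Riemann coordinates to kill the off-diagonal connection coefficients, so every cancellation must be carried out using the flatness relation \eq{R0} with the structure-coefficient terms $c^k_{ij}$ present, in concert with the rank-$1$ minor identities. A secondary subtlety is making sure the recursion in the second step is exhaustive -- that no further algebraic relation can be produced by the differential part beyond those already accounted for -- which again rests on the same flatness and symmetry identities.
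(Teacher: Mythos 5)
Your overall strategy --- reduce the rank-one algebraic part to a single scalar relation, split into three cases according to how many $\bet^i$ genuinely appear in it, substitute into the differential part \eq{beta1}, and classify the residual systems by the rank of the induced linear integrability conditions, with realizability supplied by the examples of Section~\ref{exs} --- is exactly the route the paper takes in Section~\ref{proof}, including the use of the vanishing $2\times2$ minors of $A_\bet$ and, in the two-$\bet$ case, the observation that $c^2_{13}\equiv 0$ permits rescaling $r_1,r_3$ to commuting coordinate fields so that Darboux's theorem applies.

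There is, however, a genuine gap in your third step: you assert that every closed residual system is either fully determined (Frobenius) or leaves one unknown free along a single direction (Darboux), and that these two theorems account for all the parameter counts. This fails precisely in the sub-case that produces \textbf{(3a)} for a non-rich frame with exactly one $\bet^i$ in the algebraic constraint (i.e.\ $\Gamma^2_{31}=\Gamma^3_{21}\equiv 0$, forcing $\bet^1\equiv 0$) together with $\Gamma^2_{11}\equiv\Gamma^3_{11}\equiv 0$. There the residual system \eq{r1b2d}--\eq{r2b3d} prescribes the derivatives of $\bet^2$ along $r_1,r_3$ and of $\bet^3$ along $r_1,r_2$, with cross-coupling between the two unknowns; because the two unknowns are constrained along \emph{different} pairs of directions, no change of variables brings the system into Darboux form, and the paper must invoke the more general Cartan--K\"ahler theorem to conclude that the general solution depends on 2 arbitrary functions of one variable. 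Your proposed toolbox (Frobenius plus Darboux, as in the rich case of Theorem~\ref{rich_rank_1_thm}) does not cover this branch --- which happens to be the one with the largest solution set among rank-one non-rich frames --- and your sentence attributing the arbitrary functions of (3a) to Darboux's theorem mislabels their origin. The remainder of your case tree (the rank $0/1/2$ breakdown of the integrability conditions in each branch, yielding (1), (2), (3b), (4a), (4b), (4c)) does go through with Frobenius and Darboux as you describe.
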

\begin{proof} In Section~\ref{proof} we show that there are no other possibilities 
for the solutions set of the $\bet$-system. On the other hand, the examples in 
Section~\ref{exs} confirm that every scenario from the above list is realizable: 
(Cases with Roman numerals refer to the cases listed in Proposition \ref{n=3lambda})
\begin{itemize}
\item{\bf Case (1)} is realized by Example~\ref{non-rich-rank1_(ia)}. The corresponding 
$\lam$-system belongs to Case IIb  of Proposition~\ref{n=3lambda} and has non-trivial 
solutions. 
\item{\bf Case (2)} is realized by Examples~\ref{non-rich-rank1_(iib)} 
and~\ref{non-rich-rank1_(iiib)}. In the former  example, the corresponding 
$\lam$-system belongs to {Case IIa} and has non-trivial solutions depending 
on 2 constants. There are strictly hyperbolic  conservative systems that 
correspond to this frame. In the latter example, the corresponding $\lam$-system 
belongs to Case IIb and has non-trivial solutions depending on 1 function of 
one variable and 1 constant. This frame does not admit strictly hyperbolic 
conservative systems. 
\item{\bf Case (3a)} is realized by Examples~\ref{non-rich-rank1_strict_hyp_case_3b} 
and ~\ref{ex_jk1_5.3},  
Example~\ref{deg-family} when $g\equiv 0$, as well as Example~ \ref{ex_jk1_5.4}.
In the first four examples the $\lam$-system is of Case IIb, 
whereas in the last example the $\lam$-system is of the Case IIa. 
The $\lam$-systems in Examples~\ref{non-rich-rank1_strict_hyp_case_3b},
~\ref{ex_jk1_5.3} and~\ref{deg-family} 
have non-trivial solutions, while in the other two examples  the corresponding 
$\lam$-system has only the trivial solution. 
\item{\bf Case (3b)} is realized by Example~\ref{deg-family} with $g\not\equiv 0$, 
and the corresponding $\lam$-system belongs to Case IIb and has non-trivial solutions. 
\item{\bf Case (4a)} is realized by Examples~\ref{gas_dyn} (non-rich gas dynamics) 
and~\ref{non-rich-rank1_(iid)}. In the former example the corresponding $\lam$-system 
belongs to Case IIa and admits strictly hyperbolic  conservative systems, whereas in 
the latter example the corresponding $\lam$-system belong to Case IIb and does 
not admits strictly hyperbolic  conservative systems.
 \item{\bf Case (4b)} is realized by Example~\ref{non-rich-rank1_(ib)}. 
 The corresponding $\lam$-system belongs to Case IIb and has non-trivial solutions. 
\item{\bf Case (4c)} is realized by Example~\ref{non-rich-rank1_(iid1)}. 
The corresponding $\lam$-system belongs to Case IIb and has non-trivial solutions.
\end{itemize}
\end{proof}
\begin{remark} We note  that cases (1)-(3a) listed in Theorem~\ref{thm_non-rich_n=3_short} 
include  all cases from Theorem~\ref{rich_rank_1_thm}. Thus {\em non-richness} of the 
frame opens for more possibilities (Cases 3b and 4a-c).

By combining Theorem~\ref{thm_non-rich_n=3_short} and 
Theorem \ref{rich_rank_1_thm} we see that if a frame with a non-trivial algebraic 
constraint admits a non-degenerate extension, then the frame must necessarily be non-rich.

On the other hand, by Theorem~\ref{thm_non-rich_n=3_short} and 
Theorem~\ref{rich_unconstr}, the largest set of extensions occurs in the algebraically 
unconstrained (and hence) rich case described in Section \ref{no-alg}. Indeed,
in this case (i.e.\ $\Gamma_{ij}^k\equiv 0$ whenever $\eps(i,j,k)=1$) 
the general solution of the $\bet$-system depends on 3 arbitrary functions of one variable, 
while for any algebraically constrained case the maximal 
degree of freedom is 2 arbitrary functions of one variable (Case (3a)).
\end{remark}

\section{Completion of the proof of  Theorem \ref{thm_non-rich_n=3_short}}\label{proof}
In this section, we show that the cases listed in Theorem \ref{thm_non-rich_n=3_short} 
exhaust all possibilities for solutions of $\beta(\R)$, whenever $\R$ is a non-rich frame on $\Omega\subset\RR^3$
whose $\bet$-system has algebraic rank 1. 

\subsection{Setup}
By assumption the matrix $A_\beta$ in \eq{Ab} has rank $1$.
Before showing how each of the possibilities arise we make a choice of indices. 
By assumption $\R$ is non-rich such that there a triple $(i,j,k)$ with $\eps(i,j,k)=1$  
and $c^i_{jk}=\Gamma^i_{jk}-\Gamma^i_{kj}\neq 0$.
If necessary we permute the prescribed eigenfields $r_1,\,r_2,\,r_3$ and assume\beq\label{non-rich} 
	c_{32}^1\neq 0\qquad\text{and}\qquad \Gamma_{32}^1\neq 0\,.
\eeq 
We define 
\[a(u)=\frac {\Gamma_{32}^1(u)}{c_{32}^1(u)}\neq 0\,.\]
Since $\rank(A_\beta)=1$ it follows that
\beq\label{a_c}
	c_{31}^2=a\,\Gamma_{31}^2 \qquad\text{and}\qquad \Gamma_{12}^3=a\,\Gamma_{21}^3.
\eeq
Using \eq{non-rich} we solve the first algebraic constraint in \eq{Ab} for $\beta^1$ to get
\beq\label{beta11} 
	\bet ^1=\alpha^2\,\bet^2+\alpha^3\,\bet^3\qquad\mbox{where}\qquad
	\alpha^2=- \frac{\Gamma_{31}^2}{ c_{32}^1} \mbox{ and } 
	\alpha^3= \frac{\Gamma_{21}^3}{ c_{32}^1}.
\eeq
For $n=3$ the differential part of the $\beta$-system consists of the following $6$ PDEs:
\begin{align}
	\label {r2b1}      r_2(\bet^1) &= \bet^1\,(\Gamma_{21}^1 +c_{21}^1)-\beta^2\, \Gamma_{11}^2\\
	\label {r3b1}      r_3(\bet^1) &= \bet^1\,(\Gamma_{31}^1 +c_{31}^1)-\beta^3\, \Gamma_{11}^3\\
	\label {r1b2}	r_1(\bet^2) &= \bet^2\,(\Gamma_{12}^2 +c_{12}^2)-\beta^1\, \Gamma_{22}^1\\
	\label {r3b2}	r_3(\bet^2) &= \bet^2\,(\Gamma_{32}^2 +c_{32}^2)-\beta^3\, \Gamma_{22}^3\\
	\label {r1b3}	r_1(\bet^3) &= \bet^3\,(\Gamma_{13}^3 +c_{13}^3)-\beta^1\, \Gamma_{33}^1\\
	\label {r2b3}      r_2(\bet^3) &= \bet^3\,(\Gamma_{23}^3 +c_{23}^3)-\beta^2\, \Gamma_{33}^2\,.
\end{align}
Substituting \eq{beta11} into \eq{r2b1}-\eq{r2b3}, and using \eq{r3b2} and \eq{r2b3}, yield the 
PDE system
\begin{align}
	\label{r2b2m}  \alpha^2\,  r_2(\bet^2) &= 
	\bet^2\,\left[\alpha^2 \,(\Gamma_{21}^1 +c_{21}^1)-
	r_2\,(\alpha^2)\,+\alpha^3\,\Gamma_{33}^2-\Gamma_{11}^2\right]\\ 
	\nn  &\quad+\beta^3\left[\alpha^3\,(\Gamma_{21}^1+c_{21}^1
	-\Gamma_{23}^3-c_{23}^3)-r_2(\alpha^3)\right]\\
	\label{r3b3m} \alpha^3\,r_3(\bet^3) &= 
	\bet^2\,\left[\alpha^2 \,(\Gamma_{31}^1 +c_{31}^1-
	\Gamma_{32}^2 -c_{32}^2)-r_3\,(\alpha^2)\right]\\
	\nn &\quad+\beta^3\left[\alpha^3\,(\Gamma_{31}^1
	+c_{31}^1)-r_3(\alpha^3)+\alpha^2\,\Gamma_{22}^3-\Gamma_{11}^3\right]\\
	\label{r1b2m}
	r_1(\bet^2) &= \bet^2\,(\Gamma_{12}^2 +c_{12}^2-\alpha^2\,\Gamma_{22}^1)
	-\alpha^3\,\bet^3\, \Gamma_{22}^1\\
	\label{r3b2m} r_3(\bet^2) &= 
	\bet^2\,(\Gamma_{32}^2 +c_{32}^2)-\beta^3\, \Gamma_{22}^3\\
	\label{r1b3m} r_1(\bet^3) &= 
	\bet^3\,(\Gamma_{13}^3 +c_{13}^3-\alpha^3\, \Gamma_{33}^1)-\alpha^2\, \bet^2 \Gamma_{33}^1\\
	\label{r2b3m} r_2(\bet^3) &= \bet^3\,(\Gamma_{23}^3 +c_{23}^3)-\beta^2\, \Gamma_{33}^2\,.
\end{align}
We proceed to analyze this system according to  a number of $\bet$'s involved in the algebraic constraint \eq{beta11}.

\bigskip

\subsection{All three $\bet^i$ appear in the unique algebraic constraint}\label{case(i)}
In this case we have $\alpha^2\neq 0$ and $\alpha^3\neq 0$ (equivalently 
$\Gamma_{31}^2\neq 0$ and $\Gamma_{21}^3\neq 0$), and we can rewrite  
\eq{r2b2m}-\eq{r2b3m} as
\beq\label{r_i_beta_s}
	r_i(\beta^s)=\phi^s_i(u)\beta^2 + \psi^s_i(u)\beta^3\,,\qquad i=1,\, 2,\, 3,\quad s=2,\, 3.
\eeq
This is a ``Frobenius type" system which prescribes the derivatives of 2 unknowns 
along three independent vector-fields in $\RR^3$. According to the Frobenius Theorem \cite{sp1} 
the associated integrability conditions require that 
\beq\label{integrab}
	[r_i,r_j](\beta^s)=\sum_{k=1}^nc^k_{ij}\, r_k(\beta^s)
	\qquad 1\leq i<j\leq 3,\quad s=2,\, 3,
\eeq
should hold as identities in $u$, $\beta^2$, $\beta^3$, when both sides are evaluated
by using \eq{r_i_beta_s}. A calculation shows that these
requirements amounts to 6 relations of the form 
\beq\label{int-cond-i} 
	\mathcal A_{ij}^s(u)\beta^2+\mathcal B_{ij}^s(u)\beta^3=0\,,\qquad 1\leq i<j\leq 3,\quad s=2,\, 3,
\eeq
where the $\mathcal A_{ij}^s$ and $\mathcal B_{ij}^s$ are given in terms of 
$\phi^s_i$, $\psi^s_i$, their derivatives, and the $c^k_{ij}$. We have the following possibilities

\begin{itemize}
\item If all $\mathcal A_{ij}^s$ and $\mathcal B_{ij}^s$ vanish identically, then the integrability 
conditions are satisfied for all $\bet^2$ and $\bet^3$, and by the Frobenius Theorem the 
general solution of \eq{r_i_beta_s} depends on two arbitrary constants. The remaining 
$\bet^1$ is expressed in terms  of $\bet^2$ and $\bet^3$ using  \eq{beta11}. The two arbitrary 
constants can be chosen  so that all $\bet^i$ are non-zero on an open subset  of $\Omega$. 
This scenario is listed in {\bf \em Case 4b} of Theorem~ \ref{thm_non-rich_n=3_short}. 

 \item \medskip If the system \eq{int-cond-i} is of rank one then  there are two possibilities
 \begin{itemize}
\item Conditions \eq{int-cond-i} are satisfied when  $\bet^2\equiv0$ and $\bet^3 $
is an arbitrary function. Substituting into 
\eq{r2b2m}-\eq{r2b3m} yields an over-determined algebraic-differential system for 
$\beta^3$ with 3 algebraic constraints and 3 PDEs prescribing $r_i(\beta^3)$,
$i=1,\, 2,\, 3$.  Provided the algebraic relations are satisfied and the appropriate 
integrability conditions are met for all values of $\bet^3$, Frobenius' Theorem 
again applies and shows that the general solution of the $\beta^3$-system depends 
on 1 arbitrary constant. From  \eq{beta11} it follows that $\bet^1=\alpha^3\,\bet^3$ 
depends on the same constant, and we obtain    {\bf \em Case 3b} of the Theorem.   
Otherwise, $\bet^3\equiv0$ is the only solution and therefore from  \eq{beta11} it 
follows that $\bet^1=0$.  This is {\bf \em Case 1} (trivial solution).
\item Conditions \eq{int-cond-i} are satisfied when  $\bet^3\equiv0$ and $\bet^2 $
is an arbitrary function. By the same argument as above we are either in 
{\bf \em Case 3b} or  {\bf \em Case 1} of the Theorem.
\item Otherwise, from \eq{int-cond-i} one can write   $\bet^2=\Phi(u)\, \bet^3$, 
where $\Phi(u)$ is a non-zero function. Substituting into 
\eq{r2b2m}-\eq{r2b3m} yields an over-determined algebraic-differential system for 
$\beta^3$ with 3 algebraic constraints and 3 PDEs prescribing $r_i(\beta^3)$,
$i=1,\, 2,\, 3$.  Provided the algebraic relations are satisfied and the appropriate 
integrability conditions are met for all values of $\bet^3$, Frobenius' Theorem 
implies  that the general solution of the $\beta^3$-system depends on 1 
arbitrary constant. Then  $\bet^2=\Phi(u)\, \bet^3$ depends on the same constant, 
as does $\bet^1$ due to  \eq{beta11}. This is  {\bf \em Case 4c} of the Theorem.  
Otherwise the PDEs for $\beta^3$ imply that $\bet^3\equiv 0$ and hence 
$\bet^2=\Phi(u)\, \bet^3\equiv 0$. From  \eq{beta11} it follows that $\bet^1\equiv 0$ 
as well and we are in {\bf \em Case 1} (trivial solution).
\end{itemize}

\item \medskip Finally, if  the system \eq{int-cond-i} is of rank 2, then  $\beta^2=\beta^3\equiv0$ 
is the only solution for  \eq{r_i_beta_s} which, together with \eq{beta11},
yield the trivial solution of $\beta(\R)$ ({\bf \em Case~1}).
\end{itemize}


\medskip

\subsection{Exactly two $\bet^i$ appear in the algebraic constraint}\label{case(ii)}
In this case exactly one of $\alpha^2$ and $\alpha^3$ is non-zero; without loss of 
generality we assume that 
\beq\label{B2}
	\alpha^2\equiv 0 \qquad\text{and}\qquad \alpha^3\neq 0
	\qquad\text{(equivalently $\Gamma_{31}^2\equiv 0$ and $\Gamma_{21}^3\neq 0$)}.
\eeq
From \eq{beta11} it then follows that 
\beq\label{rank1case2}
	\bet^1=\alpha^3\bet^3,
\eeq 
and the differential part \eq{r2b2m}-\eq{r2b3m} of the $\beta$-system reduces to
\begin{align}
	\label{eeq1} 0 &= \bet^2\,\left(\alpha^3\,\Gamma_{33}^2-\Gamma_{11}^2\right)
	+\beta^3\left[\alpha^3\,(\Gamma_{21}^1+c_{21}^1-\Gamma_{23}^3-c_{23}^3)-r_2(\alpha^3)\right]\\
	\label{eeq2} \alpha^3\,r_3(\bet^3) &= \beta^3\left[\alpha^3\,(\Gamma_{31}^1+c_{31}^1)
	-r_3(\alpha^3)-\Gamma_{11}^3\right]\\
	\label{eeq3} r_1(\bet^2) &= \bet^2\,(\Gamma_{12}^2 +c_{12}^2)-\alpha^3\,\bet^3\, \Gamma_{22}^1\\
	\label{eeq4} r_3(\bet^2) &= \bet^2\,(\Gamma_{32}^2 +c_{32}^2)-\beta^3\, \Gamma_{22}^3\\
	 \label {eeq5} r_1(\bet^3) &= \bet^3\,(\Gamma_{13}^3 +c_{13}^3-\alpha^3\, \Gamma_{33}^1)\\
	\label {eeq6} r_2(\bet^3) &= \bet^3\,(\Gamma_{23}^3 +c_{23}^3)-\beta^2\, \Gamma_{33}^2\,.
\end{align}

From \eq{a_c}$_{1}$ and \eq{B2}$_{1}$ we conclude that $c_{13}^2\equiv 0$. It follows that 
$r_1$ and $r_3$ may be scaled to obtain {\em commuting} vector fields $\tilde r_1$ and $\tilde r_3$.  
We can then choose a coordinate system $(v^1,v^2,v^3)=\rho(u^1,u^2,u^3)$ in $\RR^3$ such 
that $\tilde r_1$ and $\tilde r_3$ are {\em coordinate} vectors. 
(Since $\tilde r_1$ and $\tilde r_3$ are in involution, Frobenius' 
Theorem gives the existence of a coordinate system $(\tilde u^1,\tilde u^2,\tilde u^3)$
in which the level sets of $\tilde u^2$, say, are integral manifolds of the 2-distribution 
$\text{span}\{\tilde r_1,\, \tilde r_3\}$. Since  $\tilde r_1$ and $\tilde r_3$ commute there is a change 
of the two coordinate functions $v^1=\tau^1(\tilde u^1,\tilde u^3)$,  $v^3=\tau^3(\tilde u^1,\tilde u^3)$ 
such that $\tilde r_1=\pd{}{v^1}$ and $\tilde r_3=\pd{}{v^3}$. Let $v^2=\tilde u^2$.)

In $v$-coordinates we then have 
\[r_1=\xi^1(v)\,\pd{}{v^1}\,,\quad r_2=\xi^2(v)\,\pd{}{v^1}+\xi^3(v)\,\pd{}{v^2}+\xi^4(v)\,\pd{}{v^3}\,,\quad 
r_3=\xi^5(v)\,\pd{}{v^3}\,,\] 
where $\xi^1$, $\xi^3$ and $\xi^5$ are non zero functions. For $i=1,\, 2,\, 3$, let $\gamma^i$ denote 
the pull-back of $\beta^i$ under the change of coordinates $\rho$: $\gamma^i(\rho(u))=\beta^i(u)$. 
We rewrite the system \eq{eeq1} -\eq{eeq6} in terms of $v$-coordinates and reorder the equations 
to group derivations of $\gamma^2$ and $\gamma^3$ together. With $\del_i=\pd{}{v^i}$ we then 
have the system
\begin{align}
\label{Valg} 0&= \mathcal A_1\, \gamma^2+ \mathcal B_1\, \gamma^3\\
\label{V12} \xi^1\,\del_1(\gamma^2) &= \Phi_{1}^2\,\gamma^2+\Psi_1^2\,\gamma^3\\
\label{V32} \xi^5\,\del_3(\gamma^2) &= \Phi_{3}^2\,\gamma^2+\Psi_3^2\,\gamma^3\\
 \label {V13} \xi^1\,\del_1(\gamma^3) &= \phantom{\Phi_{1}^3\,\gamma^2+}\Psi_1^3\,\gamma^3\\
\label {V23} [\xi^2\,\del_1+\xi^3\,\del_2+\xi^4\,\del_3](\gamma^3) &= \Phi_{2}^3\,\gamma^2+\Psi_2^3\,\gamma^3\\
\label{V33} \xi^5\,\del_3(\gamma^3) &=  \phantom{ \Phi_{3}^3\,\gamma^2+}\Psi_3^3\,\gamma^3\,,
\end{align}
where $\mathcal A$, $\mathcal B$, $\Phi^i_j$ and $\Psi^i_j$ are functions of $(v^1,v^2,v^3)$ that can 
be expressed in terms of functions $\Gamma^i_{jk}$, $\alpha^3$ and the change of coordinates $\rho$.

We observe that we can solve equations \eq{V12}-\eq{V33} for the derivatives of $\gamma^2$ and 
$\gamma^3$ and obtain a system of Darboux type, see \cite{jk1}. Derivatives of $\gamma^2$ are 
prescribed along two coordinate directions, while derivatives of $\gamma^3$ are prescribed along 
all three coordinate directions:
\begin{align}
        \label{v12} \del_1(\gamma^2) &= \phi_{1}^2\,\gamma^2+\psi_1^2\,\gamma^3\\
	\label{v32} \del_3(\gamma^2) &= \phi_{3}^2\,\gamma^2+\psi_3^2\,\gamma^3\\
	\label{v13} \del_1(\gamma^3) &= \phantom{\phi_{1}^3\,\gamma^2+}\psi_1^3\,\gamma^3\\
	\label{v23} \del_2(\gamma^3) &= \phi_{2}^3\,\gamma^2+\psi_2^3\,\gamma^3\\
	\label{v33} \del_3(\gamma^3) &=  \phantom{ \phi_{3}^3\,\gamma^2+}\psi_3^3\,\gamma^3\,,
\end{align}
where $\phi^i_j$ and $\psi^i_j$ are known functions of $(v^1,v^2,v^3)$. According to 
Darboux's theorem (Theorem 4.1 in \cite{jk1}) the relevant integrability conditions 
are obtained by evaluating the identities  $\del_i\del_j\gamma^s-\del_j\del_i\gamma^s=0$  
by using equations \eq{v12}-\eq{v33}. A calculation shows that this gives rise to 5 linear 
homogeneous algebraic equations on $\gamma^2$ and $\gamma^3$:
\beq\label{integrabi}
	 \mathcal A_i\,\gamma^2+  \mathcal B_i\,\gamma^3=0, \quad i=1,\dots,5, 
\eeq
where  $\mathcal A_i$ and  $\mathcal B_i$ are known functions of $(v^1,v^2,v^3)$.
Together with \eq{Valg} we therefore obtain a system $\mathcal L$ of 6 linear 
homogeneous algebraic equations on $\gamma^2$ and $\gamma^3$. We break down all 
possibilities according to the rank of $\mathcal L$.

\begin{itemize}
\item If $\rank\mathcal L=0$, i.e.\ the integrability conditions \eq{integrabi} together with \eq{Valg} 
are satisfied for all $\gamma^2$ and $\gamma^3$, then according to Darboux's theorem 
the general solution of  \eq{v12}-\eq{v33} depends on 1 arbitrary function
of one variable and 1 constant. This is {\bf \em Case 4a} of the theorem.
\item \medskip We split the case of $\rank\mathcal L=1$ into three sub-cases:
\begin{itemize}
\item $\mathcal L$ is satisfied for $\gamma^2\equiv 0$  (and so  $\beta^2\equiv 0$) and 
arbitrary $\gamma^3$. In this case system \eq{v12}-\eq{v33} reduces to
	\begin{align}
        \label{v12b2} 0 &= \psi_1^2\,\gamma^3\\
	\label{v32b2} 0 &= \psi_3^2\,\gamma^3\\
	\label{v13b2} \del_1(\gamma^3) &=\psi_1^3\,\gamma^3\\
	\label{v23b2} \del_2(\gamma^3) &= \psi_2^3\,\gamma^3\\
	\label{v33b2} \del_3(\gamma^3) &= \psi_3^3\,\gamma^3\,,
\end{align}
	If either $\psi_1^2\neq 0$ (equivalently $\Gamma_{22}^1\neq 0$) or  $\psi_3^2\neq 0$ 
	(equivalently $ \Gamma_{22}^3\neq 0$) then \eq{v12b2}, respectively \eq{v32b2}, 
	shows that $\gamma^3=0$. Thus $\bet ^3=0$ and $\bet^1=0$ by \eq{rank1case2}. 
	In this case the $\beta$-system has only the trivial solution ({\bf \em Case 1}). 
	Otherwise, \eq{v13b2}, \eq{v23b2}, \eq{v33b2} is a compatible system of Frobenius type 
	for $\gamma^3$. (One shows this by verifying that compatibility is a consequence of the 
	assumption that $\mathcal L$ is satisfied for $\gamma^2=0$  and arbitrary $\gamma^3$.
	We drop details of this routine calculation.) Therefore $\gamma^3$, and hence also 
	$\beta^3$, depends on 1 constant; $\beta^1=\alpha^3\bet^3$ (recall \eq{rank1case2})
	depends on the same constant, while $\bet^2=0$. This is {\bf \em Case 3b} of the theorem.
\item $\mathcal L$ is satisfied for $\gamma^3\equiv 0$  (and so $\beta^3\equiv 0$) and arbitrary 
	$\gamma^2$. Then the system \eq{v12}-\eq{v33} reduces to
	\begin{align}
        \label{v12b3} \del_1(\gamma^2) &= \phi_{1}^2\,\gamma^2\\
	\label{v32b3} \del_3(\gamma^2) &= \phi_{3}^2\,\gamma^2\\
	\label{v23b3} 0 &= \phi_{2}^3\,\gamma^2
\end{align}
If $\phi_2^3\not\equiv 0$ (equivalently $\Gamma_{33}^2\not\equiv 0$) then \eq{v23b3} shows
that $\gamma^2\equiv 0$, such that $\bet ^2\equiv 0$. Since $\beta^3\equiv 0$ also 
$\bet^1\equiv 0$ by \eq{rank1case2}, and we are in {\bf \em Case 1}. Otherwise, 
\eq{v12b3}-\eq{v32b3} is a compatible system of Darboux type for $\gamma^2$. 
(Again, compatibility is a consequence of our assumption that $\mathcal L$ is satisfied for 
$\gamma^3\equiv 0$  and arbitrary $\gamma^2$.) Therefore  $\gamma^2$  (and hence  $\beta^2$)  
depends on 1 arbitrary function of one variable and $\beta^1=\alpha^3\bet^3\equiv 0$. This is  
{\bf \em Case 2} of the theorem.

\item $\mathcal L$ is satisfied for $\gamma^3=\mathcal A(v)\gamma^2$, where  
$\mathcal A$ is a non-zero function. Substituting  $\gamma^3=\mathcal A\,\gamma^2$ into  
\eq{v12}-\eq{v33}, we obtain an overdetermined system of equations of Frobenious type 
(all partial derivatives of $\gamma^2$ are specified). If the system is compatible for all 
$\gamma^2$,  then by Frobenious theorem its general solution for $\gamma^2$ (and hence $\bet^2$) 
depends on 1 constant. Then $\gamma^3=\mathcal A\,\gamma^2$, and thus $\bet^3$, depends on 
the same constant. Combining this with \eq{rank1case2} we conclude that the the general solution 
of the $\bet$-system depends on 1 arbitrary constant. This is {\bf\em  Case 4c} of the theorem.
Otherwise, the system is  compatible only  for $\gamma^2\equiv 0$, and 
$\gamma^3=\mathcal A\,\gamma^2\equiv 0$. Hence $\bet^2=\bet^3\equiv 0$ in this case. 
By \eq{rank1case2}, also $\bet^1\equiv 0$, and the $\bet$-system has only the trivial solution 
({\bf\em  Case 1}).

\end{itemize}

\item Finally if $\rank\mathcal L=2$ then $\mathcal L$ is satisfied only for 
$\gamma^2=\gamma^3\equiv 0$. Hence $\bet^2=\bet^3\equiv 0$. Again $\bet^1\equiv 0$ 
and the $\bet$-system has only the trivial solution ({\bf  \em Case 1}).
\end{itemize}

\subsection{Exactly one $\beta^i$ appears in the algebraic constraint}\label{case(iii)}
In this case $\alpha^2=\alpha^3\equiv 0$ (equivalently $\Gamma_{31}^2=\Gamma_{21}^3\equiv 0$)
and $\bet^1\equiv 0$ by \eq{beta11}. The system \eq{r2b2m}-\eq{r2b3m} thus reduces to 
\begin{align}
	\label{r2b2d} 0 &= \bet^2\,\Gamma_{11}^2\\
	\label{r3b3d} 0&= \beta^3\,\Gamma_{11}^3\\
	\label {r1b2d} r_1(\bet^2) &= \bet^2\,(\Gamma_{12}^2 +c_{12}^2)\\
	\label {r3b2d} r_3(\bet^2) &= \bet^2\,(\Gamma_{32}^2 +c_{32}^2)-\beta^3\, \Gamma_{22}^3\\
	\label {r1b3d} r_1(\bet^3) &= \bet^3\,(\Gamma_{13}^3 +c_{13}^3)\\
	\label {r2b3d} r_2(\bet^3) &= \bet^3\,(\Gamma_{23}^3 +c_{23}^3)-\beta^2\, \Gamma_{33}^2\,.
\end{align}
From the assumptions of this case, together with \eq{a_c} and \eq{beta11} we have 
\beq\label{asumpiii}
	\Gamma^2_{31}=\Gamma^2_{13}=
	c^2_{13}=\Gamma^3_{12}=\Gamma^3_{21}=c^3_{12}\equiv 0
\eeq
We consider the possible sub-cases:
\begin{itemize}
	\item $\Gamma_{11}^2\not\equiv 0$ and $\Gamma_{11}^3\not\equiv 0$. In this case 
	$\beta^2\equiv 0$ and $\beta^3\equiv 0$, such that the $\beta$-system has only the trivial solution,
	i.e.\ {\bf \em Case 1} of the theorem.

	\medskip

	\item $\Gamma_{11}^2\equiv 0$ and $\Gamma_{11}^3\not\equiv 0$. In this case $\beta^3\equiv 0$ and
	the system \eq{r2b2d}-\eq{r2b3d} reduces to
	\begin{align}
	\label{eq1} 0 &= \bet^2\,\Gamma_{33}^2\\
	\label {eq2} r_1(\bet^2) &= \bet^2\,(\Gamma_{12}^2 +c_{12}^2)\\
	\label {eq3} r_3(\bet^2) &= \bet^2\,(\Gamma_{32}^2 +c_{32}^2).
	\end{align}
	If $\Gamma_{33}^2\neq 0$ then the $\beta$-system has only the trivial solution ({\bf \em Case 1}). 
	Otherwise a direct calculation (using flatness  and symmetry conditions \eq{T0}, \eq{R0}, as well 
	as assumptions  \eq{asumpiii} and  $\Gamma^2_{11}=\Gamma^2_{33}\equiv 0$
	that are now in force), shows that the compatibility condition
	\[[r_1,r_3](\beta^2)=\sum_{k=1}^3c^k_{13}r_k(\beta^2)\]
	holds as an identity in $u$, $\beta^3$, when calculated according to \eq{eq2} and \eq{eq3}.
	Since $c_{13}^2\equiv 0$ we can introduce the same coordinates as in Section~\ref{case(ii)}. 
	Rewritten in these new coordinates, the system \eq{eq2}, \eq{eq3} specifies the derivatives 
	of one unknown function in two coordinate directions and   therefore is of Darboux type. 
	Applying Darboux's theorem (see \cite{jk1}) we conclude that $\bet^2$ depends on 1 function 
	of one variable. In this case $\bet^1=\bet^3\equiv0$ and we obtain {\bf \em Case 2} of the 
	theorem.	
	
	\medskip

	\item $\Gamma_{11}^3\equiv 0$ and $\Gamma_{11}^2\neq 0$. This case reduces to the 
	previous one upon permuting the second and third eigenvectors. We obtain either a trivial 
	solution, or $\bet^1=\bet^2\equiv 0$ while $\bet^3$ depends on 1 arbitrary function of one 
	variable. This is again {\bf \em Case 2} of the theorem. 
	\medskip

	\item $\Gamma_{11}^2\equiv  \Gamma_{11}^3\equiv 0$. In this case \eq{r2b2d} and 
	\eq{r3b3d} are satisfied for all $\bet^2$ and $\bet^3$.  A direct computation (using flatness 
	and symmetry conditions \eq{T0}, \eq{R0}, as well as assumptions \eq{asumpiii} and 
	$\Gamma^2_{11}=\Gamma^3_{11}\equiv 0$ that are now in force), shows that  the compatibility 
	conditions for the remaining PDE system \eq{r1b2d}-\eq{r2b3d}, viz.
	\[[r_1,r_3](\beta^2)=\sum_{k=1}^3c^k_{13}r_k(\beta^2),\qquad 
	[r_1,r_2](\beta^3)=\sum_{k=1}^3c^k_{12}r_k(\beta^3)\,,\]  
	hold as identities in $u$, $\beta^2$ and $\beta^3$, when calculated according to 
	\eq{r1b2d}-\eq{r2b3d}.
\begin{remark} Explicitly, the integrability conditions are given by 
\begin{align}
	\label{1beta2} r_3(\Gamma_{12}^2+c_{12}^2)-r_1(\Gamma_{32}^2+c_{32}^2)
	+c_{13}^1\,(\Gamma_{12}^2+c_{12}^2)+c_{13}^3\,(\Gamma_{32}^2+c_{32}^2) &=0\\
	\label {1beta3} r_1(\Gamma_{22}^3) -\Gamma_{22}^3\,(\Gamma_{12}^2
	+c_{12}^2-\Gamma_{13}^3)&= 0\\
	\label{2beta2} r_2(\Gamma_{13}^3+c_{13}^3)-r_1(\Gamma_{23}^3+c_{23}^3)
	+c_{12}^1\,(\Gamma_{13}^3+c_{13}^3)+c_{12}^2\,(\Gamma_{23}^3+c_{23}^3) &=0\\
	\label {2beta3} r_1(\Gamma_{33}^2) -\Gamma_{33}^2\,(\Gamma_{13}^3
	+c_{13}^3-\Gamma_{12}^2)&= 0\,.
\end{align}
\end{remark}
\noindent
Unfortunately no change of variables seems to bring \eq{r1b2d}-\eq{r2b3d} into Darboux 
form (i.e. we do not obtain an equivalent system in which each equation contains only 
one partial derivative of one unknown function). We therefore need apply the more general 
Cartan-K\"ahler theorem.  
We omit the lengthy calculations that lead to the conclusion that the general solution of 
\eq{r1b2d}-\eq{r2b3d} depends on 2 arbitrary functions of one variable and two constants. 
Two arbitrary constants specify the values of $\bet^2$ and $\bet^3$ at an initial point 
$\bar u$ and 2 arbitrary functions of one variable  prescribe the directional derivatives 
$s^2=r_2(\bet^2)$  and $s^3=r_3(\bet^3)$ along a curve. These arbitrary functions absorb 
the arbitrary constants. Thus, the general solution depends on 2 arbitrary functions of one 
variable. Recalling that $\bet^1\equiv 0$ we conclude that this provides {\bf \em Case 3a} 
of the theorem.
\end{itemize}

With this we have verified that Theorem \ref{thm_non-rich_n=3_short} describes all possible 
degrees of freedom for the general solution of $\bet$-system, when its algebraic part has 
rank 1.

\section{Examples}
\label{exs}
\subsection{The Euler system for 1-dimensional compressible flow}
\begin{example}\label{gas_dyn}	
Extensions and entropies for the 1-d compressible Euler system 
have been considered by several authors, (see Remark \ref{euler_refs}). We now treat 
this particular case within our setup of prescribed eigen-frames. I.e., we first determine 
the eigen-frame $\R$ of the Euler system and then analyze the associated $\lam$- and 
$\bet$-systems. In Lagrangian variables the system is:
\bea
	v_t - u_x &=& 0  \label{mass}\\
	u_t + p_x&=& 0  \label{mom}\\
	E_t + (up)_x &=& 0\,,\label{energy}
\eea
where $v,\, u,\, p$ are the specific volume, velocity, and pressure, respectively, 
and $E=\epsilon+\frac{u^2}{2}$ is the total specific energy. Here $\epsilon$ denotes 
the specific internal energy and we assume that it is given in the form of a so-called 
complete equation of state (EOS) $\eps=\eps(v,S)$, \cite{mp}.
The thermodynamic variables are related through Gibbs' relation $d\epsilon=TdS-pdv$,
where $T$ is absolute temperature and $S$ is specific entropy. We make the standard 
sign assumptions:
\[T=T(v,S)=\eps_S(v,S)>0\,,\qquad \text{and}\qquad p=p(v,S)=-\eps_v(v,S)>0\,.\]
Thermodynamic stability requires that $\epsilon$ is convex at each state $(v,S)$ \cite{mp}: 
\beq\label{thdyn_stab}
	\eps_{vv}> 0\,,\qquad \eps_{vv}\eps_{SS}>\eps_{vS}^2\,,
\eeq
which implies that
\beq\label{stnd_assump}
	p_v(v,S)=-\eps_{vv}(v,S)<0\,.
\eeq
For smooth solutions \eq{mass}-\eq{energy} may be rewritten as
\bea
	v_t - u_x &=& 0 \label{Cont}\\
	u_t + p_x&=& 0  \label{Mom}\\
	S_t &=& 0\,. \label{Ener}
\eea
The Jacobian of the flux $f(v,u,S)=(-u,\, p,\, 0)^T$ has eigenvalues 
\[\lambda^1=-\sqrt{-p_v}\,,\qquad 
\lambda^2\equiv 0\,,\qquad 
\lambda^3=\sqrt{-p_v}\,,\]
with corresponding right and left eigenvectors (normalized according to $R_i\cdot L^j\equiv \delta_i^j$)
\beq\label{r_evs}
	R_1=\left[\,1,\, \sqrt{-p_v},\, 0\,\right]^T\,,\qquad
	R_2=\left[\,-p_S,\, 0,\, p_v\,\right]^T\,,\qquad
	R_3=\left[\,1,\, -\sqrt{-p_v},\, 0\,\right]^T\,,
\eeq
and 
\beq\label{levs}
	L^1=\textstyle\frac{1}{2}\Big[\, 1\,,\, \frac{1}{\sqrt{-p_v}}\,,\, \frac{p_S}{p_v}\, \Big]\,,\qquad
	L^2=\Big[\, 0\,,\, 0\,,\, \frac{1}{p_v}\, \Big]\,,\qquad
	L^3=\textstyle\frac{1}{2}\Big[\, 1\,,\, -\frac{1}{\sqrt{-p_v}}\,,\, \frac{p_S}{p_v}\, \Big]\,.
\eeq
The $\lambda$-system associated with the frame $\R=\{R_1,\, R_2,\, R_3\}$ was analyzed in 
\cite{jk1}, and there are two distinct cases:
\begin{itemize}
	\item [$(a)$] $\big(\frac{p_S}{p_v}\big)_v\equiv 0$: The Euler system is rich 
	with no algebraic constraints, and the general solution of the $\lambda$-system 
	depends on three functions of one variable.
	\item [$(b)$] $\big(\frac{p_S}{p_v}\big)_v\neq 0$: There is a single 
	algebraic relation among the eigenvalues: 
	\[\lambda^1+\lambda^3=2\lambda^2\,.\]
	The general solution of the $\lambda$-system is described by {\bf \em Case IIa}  
	of Proposition~\ref{n=3lambda} and  depends on 2 constants 
	$\bar\lam$, $C$ according to:
	\[\lambda^1 = \bar \lambda-C\sqrt{-p_v}  \,,\qquad
	\lambda^2 \equiv \bar \lambda \,,\qquad
	\lambda^3 = \bar \lambda+C\sqrt{-p_v}\,. \]
\end{itemize}
We note that Case (a) occurs if and only if the pressure has the particular form
\beq\label{spec_p}
	p(v,S)= \mathcal P(v+\phi(S))
\eeq
for functions $\mathcal P(\cdot)$, $\phi(\cdot)$ of 1 variable.

We proceed to analyze the $\beta$-system corresponding to $\R$.
A calculation (using the expressions for the coefficients $\Gamma_{ij}^k$ from \cite{jk1})
shows that the three algebraic relations of $\beta(\mathfrak R)$ are identical and express the 
single constraint that
\beq\label{eul_alg_beta_constr}
	\Big(\frac{p_S}{p_v}\Big)_v\big(\beta^1-\beta^3\big)=0\,.
\eeq
The two cases (a) and (b) thus yield different answers:
\begin{itemize}
	\item [$(a)$] $\big(\frac{p_S}{p_v}\big)_v\equiv 0$: There is no algebraic constraint
	in the $\beta$-system, and the analysis in Section \ref{rich_no_constr}
	applies. The general solution of the $\beta$-system depends on 3 functions of 1 variable. 
	\item [$(b)$] $\big(\frac{p_S}{p_v}\big)_v\neq 0$: The unique algebraic constraint 
	is $\beta^1\equiv \beta^3$. We claim that the $\bet$-system in this case is described 
	by {\bf \em Case 4a} of Theorem \ref{thm_non-rich_n=3_short}: the general 
	solution of the $\beta$-system depends on 1 constant and 1 function of 1 variable.
	Furthermore, this variable is the physical entropy $S$. 
	Indeed, the solution of the $\bet$-system provided by our MAPLE code is
	\beq\label{gas_beta}
		\bet^1=\bet^3=K_1\, p_v \qquad\mbox{and}\qquad 
		\bet^2= \frac{K_1 p_v^2}{2}\left(\int_{K_2}^{v} 
		p_{SS}(\tau,S)\,d\tau-\,\frac{ p_S^2}{p_v}(v,S)+F(S)\right)\,,
	\eeq
	where $K_1$ and $K_2$  are arbitrary constants and $F$ is an arbitrary function of one variable. 
	Since $p=-\eps_v$ we may re-write this as 
	\beq\label{gas_beta2}
		\bet^1=\bet^3=K\, \eps_{vv} \qquad\mbox{and}\qquad 
		\bet^2= \frac{K \eps_{vv}^2}{2}\left[\frac{\eps_{vv}\eps_{SS}-\eps_{vS}^2}{\eps_{vv}}+G(S)\right]\,,
	\eeq
	where we have set $K:=-K_1$ and $G(S):=-\eps_{SS}(K_2,S)-F(S)$. 

\end{itemize}
Let's focus on the non-rich case and determine the extensions $\eta(v,u,S)$ for the Euler system. 
Using the expressions in \eq{r_evs} and \eq{gas_beta2} we need to determine $\eta$ from the 
six relations:
\begin{align*}
	R^T_{1}(D^2\eta) R_{1}&=K e_{vv}\\
	R^T_{3}(D^2\eta) R_{3}&=K e_{vv}\\
	R^T_2(D^2\eta) R_2&=
	\frac{K \eps_{vv}^2}{2}\left[\frac{\eps_{vv}\eps_{SS}-\eps_{vS}^2}{\eps_{vv}}+G(S)\right]\\
	R^T_{i}(D^2\eta) R_{j}&=0 \qquad\mbox{ for $1\leq i<j\leq 3$}\,.
\end{align*}
A straightforward calculation shows that these imply that $\eta(v,u,S)$ has the form:
\beq\label{eta}
	\eta(v,u,S)=C_1v+C_2u+C_3\Big[\eps(v,S)+\frac{u^2}{2}\Big]+H(S)\,,
\eeq
From \eq{gas_beta2} we can immediately determine when the extension is convex:
the scalar field $\eta(v,u,S)$ given by \eq{eta} is strictly convex if and only if 
\[C_3\eps_{vv}>0\qquad\text{and}\qquad \frac{C_3}{\eps_{vv}}\big(\eps_{vv}\eps_{SS}-\eps_{vS}^2\big)+H''(S)>0
\qquad\text{for all $(v,S)\in\RR_+\times\RR$.}\] 
Under the assumption of thermodynamic stability \eq{thdyn_stab}, $\eta$ is convex if and only if
\[C_3>0\qquad\text{and}\qquad H''(S)>-\frac{C_3}{\eps_{vv}}\big(\eps_{vv}\eps_{SS}-\eps_{vS}^2\big)\,.\]

\begin{remark}\label{euler_refs}
	Extensions and entropies for the Euler system with general equations of state
	have been analyzed in \cites{rj, hllm, ser1}. 
	In \cite{rj} the general form of an extension was derived for Case (b).
	In \cite{ser1} the general form of extensions (for both Case (a) and Case (b)) was 
	determined (\cite{ser1}  Exercise 3.19, p.\ 86), and  the convexity of extensions of the 
	form $g(S)$ ($g$ a scalar map, $S$ the physical entropy) was characterized
	(\cite{ser1}  Exercise 3.18, p.\ 85). The latter issue was also treated in \cite{hllm}, 
	extending the analysis in \cite{ha} for ideal, polytropic gases.
\end{remark} 

\end{example}

\subsection{Examples with rich frames}
\subsubsection{\bf{Rich frames admitting strictly hyperbolic conservative systems}}
According to the analysis in Section \ref{rich}, any $n$-frame with the property that 
$\Gamma_{ij}^k$ vanishes identically whenever $\epsilon(i,j,k)=1$, is necessarily rich. 
Frames of this type admit a large family 
of corresponding conservative systems \eq{cons}, parameterized by $n$ arbitrary functions of 
one variable. The family contains both strictly hyperbolic and non-strictly hyperbolic systems. 
The solutions of the $\bet$-system enjoy the same degree of freedom (Theorem \ref{rich_unconstr}). 
For a strictly hyperbolic systems the solutions to the $\bet$-system provides {\em all} possible extensions, 
while non-strictly hyperbolic system may have additional extensions arising from the first part of the 
condition \eq{cond4}. 
\begin{example}\label{strict_hyp_not_all}
	This example shows that 
	$\beta(\R)$ may not provide all extensions for all systems \eq{cons} corresponding to a given frame, 
	even when the latter admits strictly hyperbolic systems \eq{cons}.
	Let $\mathfrak R=\{\del_{u^1},\del_{u^2}\}$, the standard coordinate frame on $\RR^2$.
	In this case any pair of functions of the form $(\lam^1(u),\lam^2(u))
	=(\phi(u^1),\psi(u^2))$ solves $\lam(\mathfrak R)$, which thus 
	admits strictly hyperbolic solutions. From \eq{cond4}, and the fact that 
	the $\lam$- and $\beta$-systems are ``separated" (no unknown of one 
	occurs in the other), one might expect that the $\beta$-system in this 
	case would provide {\em all} extensions for {\em all} systems \eq{cons} with the 
	same eigen-frame $\mathfrak R$. However, a simple example shows that this 
	is incorrect. E.g., among the systems with eigen-frame $\mathfrak R$ 
	there are the trivial systems $u_t+\bar\lam u_x=0$ ($\bar\lam\in\RR$), 
	which admits any scalar function $\tilde \eta(u)$ as an extension. 
	Consider the choice $\tilde \eta(u)=(u^1u^2)^2/2$. The lengths of the given eigenvectors,
	measured with respect to the inner-product $D^2 \tilde \eta(u)$, are $\beta^1=(u^2)^2$ 
	and $\beta^2=(u^1)^2$. However, these do {\em not} solve $\beta(\mathfrak R)$,
	which in this case consists of the two PDEs $\del_{u^i}\beta^j=0$, $1\leq i\neq j\leq 2$.
\end{example}
\begin{example}\label{rich_rank0}({\sc rich orthogonal frame})
$$R_1:=(u^1,\, u^2,\, 0)^T,\qquad R_2=(-u^2,\, u^1,\, 0)^T,\qquad R_3=(0,\, 0,\, 1)^T.$$ 
This is a rich, orthogonal, and commutative frame on $\RR^3-\{(0,0,u_3)\}$.
The $\lam$- and $\bet$-systems impose no algebraic constraints and 
its general  solution depends on 
3 arbitrary functions of one variable (Theorem \ref{rich_unconstr}). 
Introducing $v=(u^1)^2+(u^2)^2$, we have
$$\lam^1=F_1(v), \qquad \frac 1{\sqrt{v}}\,\int_*^{\sqrt{v}} 
F_1(\tau^2)\,d\tau+\frac{1}{u^1}\,F_2\left(\frac {u^2}{u^1}\right), 
\qquad \lam^3 =F_3(u^3);$$
$$\displaystyle{\bet^1=v\,G_1(v), \qquad 
\bet^2 =\sqrt{v}\,\int_*^{\sqrt{v}} {G_1(\tau^2)}\,d\tau+u^1\,G_2\left(\frac {u^2}{u^1}\right), \qquad 
\bet^3 =G_3(u^3).}$$ 
The frame is orthogonal, but not orthonormal; in accordance with Observation \ref{orthogonal}
the solutions of the $\bet$-system may be obtained by scaling the solutions of the $\lam$-system. 
Indeed, choosing 
\[G_1\equiv F_1\,,\qquad G_2(\xi)=(1+\xi^2)F_2(\xi)\,,\qquad \text{and}\qquad G_3\equiv F_3\,,\]
we obtain
\[\bet^1=v\lam^1,\qquad \bet^2=v\lam^2,\qquad \bet^3=\lam^3\,.\]
We also note any solution of the 
$\lam$-system can be combined with any solution of the $\bet$-system. I.e., by choosing 
a particular set of functions $F_1,F_2,F_3$ we specify $\lam^1,\lam^2,\lam^3$, and hence 
a conservative system \eq{cons} (unique up to adding a trivial flux). For this fixed conservative 
system {\em any} choice of functions $G_1,G_2, G_3$ will provide us with an extension.     

The Riemann coordinates for this frame (in the first octant, say) are
\[w^1={\textstyle\frac 1 2} \ln\big[(u^1)^2+(u^2)^2\big]\,,\qquad
w^2 =\arctan\left(\frac{u^2}{u^1}\right)\,,\qquad w^3 = u^3\,.\]
In terms of the Riemann coordinates we have
\begin{align*}
	\bet^1&=e^{2w^1}\,G_1(e^{2w^1})=\psi_1(w^1)\\
	\bet^2 &= e^{w^1}\,\int_*^{e^{w^1}} {G_1(\tau^2)}\,d\tau
	+e^{w^1}\cos(w^2)\,G_2\left(\tan w^2\right)
	= e^{w^1}\,\int_*^{e^{w^1}} \frac {\psi_1(\ln \tau)}{\tau^2}\,d \tau+e^{w^1}\,\psi_2(w^2)\\
	\bet^3 &=G_3(w^3)=\psi_3(w^3)\,,
\end{align*}
where $\psi_1$, $\psi_2$ and $\psi_3$ are arbitrary functions of 1 variable.
In accordance with Theorem \ref{rich_unconstr}, for a fixed  point 
$(\bar w^1,\bar w^2,\bar w^3)$ and three arbitrary  functions $\varphi_1(w^1), \varphi_2(w^2)$ and 
$\varphi_3(w^3)$, there is a unique solution of the $\bet$-system such that
$$\bet^1( w^1, \bar w^2,\bar w^3)=\varphi_1(w^1),\qquad  \bet^2(\bar w^1, w^2,\bar w^3)
=\varphi_2(w^2)\qquad \mbox{ and }
\qquad\bet^3(\bar w^1, \bar w^2, w^3)=\varphi_3(w^3),$$
Indeed, $\psi_1$, $\psi_2$ and $\psi_3$ are uniquely determined from the equations:
$$\psi_1(w^1)= \varphi_1(w^1), \quad e^{\bar w^1}\,\int_*^{e^{\bar w^1}} 
\frac {\psi_1(\ln \tau))}{\tau^2}\,d \tau+e^{\bar w^1}\,\psi_2(w^2)=\varphi_2(w^2) \mbox{ and }
\psi_3(w^3)=\varphi_3(w^3).$$
Note that dependence of the general solution on the constant $\bar w^1$ 
can be ``hidden" in the arbitrary 
functions by replacing arbitrary function $\varphi_2$ with  $e^{-\bar w^1}\, 
\varphi_2-\int_*^{e^{\bar w^1}} \frac {\varphi_1(\ln \tau))}{\tau^2}\,d \tau$. 
\end{example}

\subsubsection{\bf{Rich frames  with no corresponding strictly 
hyperbolic conservative systems}}\label{rich-const}

There are rich frames for which there exists a triple $i,j,k$, such that 
$\epsilon(i,j,k)=1$ and $\Gamma_{ij}^k\neq0$. Such frames do not 
admit strictly hyperbolic  conservative systems and in some cases 
admit only trivial  systems. Even in the latter case  the $\bet$-system 
may have non-trivial solutions. This is another indication of the lack of 
general relationships between the number of solutions to $\lam(\R)$ 
and $\bet(\R)$.

\begin{example}\label{rich-rank2} {\sc Rich frame with only trivial solutions 
of the $\lam$-system and non-trivial, but degenerate solutions of the $\bet$-system:}
\[R_1=(u^1,\,u^2,\, 0)^T,\qquad R_2=(-u^2,\, u^1,\, 0)^T, \qquad R_3=(-u^2,\, u^1,\, 1)^T.\]
This is a rich frame on $\RR^3-\{(0,0,u_3)\}$ with $\rank \eq{beta2}=\rank \eq{sev2}=2$. 
Hence the $\lam$-system has only the trivial solution $\lam^1=\lam^2=\lam^3=C$. 
The algebraic constraints of the $\bet$-system are $\bet^1=0=\bet^1+\bet^2$, and 
the general solution of the $\bet$-system depends on 1 arbitrary function of one variable:
\[\bet^1=\bet^2 =0, \qquad \bet^3 =F(u^3).\]
The corresponding extensions (modulo affine parts) are given by
\[\eta(u^1,u^2,u^3) =G(u^3)\,,\qquad\text{where $G''=F$.}\]
\end{example}
\begin{example}\label{rich_rank1_details} {\sc Rich frame with no strictly-hyperbolic  
solutions of the $\lam$-system and non-trivial, but degenerate solutions of the $\bet$-system:}
$$R_1:=(u^1,\, u^2,\, u^3)^T,\qquad R_2=(u^1,\, u^2,\, 0)^T,\qquad R_3=(u^1,\, 0,\, u^3)^T.$$ 
This is a rich {\em commutative} frame on  $\RR^3\setminus\{(0,0,u_3)\}$. The $\bet$-system 
contains 1 algebraic constraint and its solution set is described by part (3) of Theorem 
\ref{rich_rank_1_thm}: the general solution depends on 2 arbitrary functions $G_1$ and 
$G_2$ of one variable,
$$\bet^1=0, \qquad \bet^2 =u^1\,G_1\Big(\frac {u^3}{u^1}\Big), 
\qquad \bet^3 =u^1\,G_2\Big(\frac {u^2}{u^1}\Big).$$ 
A set of Riemann coordinates for this frame (in the first octant, say) are
\[w^1 =\ln u^2+\ln u^3-\ln u^1\,,\qquad w^2 =\ln u^1- \ln u^3\,,\qquad w^3 =\ln u^1- \ln u^2\,.\]
In terms of the Riemann coordinates we have
\[\bet^1(w) = 0\,,\qquad
\bet^2(w) = e^{w^1+w^3}\,\psi_2(w^2)\,,\qquad
\bet^3 (w)=e^{w^1+w^2}\,\psi_3(w^3)\,,\]
where $\psi_2$, $\psi_3$  are arbitrary functions. In accordance with Theorem 
\ref{rich_unconstr}, for a fixed point $(\bar w^1,\bar w^2,\bar w^3)$ and 
two arbitrary  functions $\varphi_2(w^2)$ and $\varphi_3(w^3)$, there is a unique 
solution of the $\bet$-system with
$$\bet^2(\bar w^1, w^2,\bar w^3)=\varphi_2(w^2)\qquad \mbox{ and }
\qquad\bet^3(\bar w^1, \bar w^2, w^3)=\varphi_3(w^3)\,.$$
Indeed, $\psi_2$ and $\psi_3$  are uniquely determined 
by $$\varphi_2(w^2)=e^{\bar w^1+\bar w^3}\,\psi_2(w^2)\qquad \mbox{ and }\qquad
\varphi_3(w^3)=e^{\bar w^1+\bar w^2}\,\psi_3(w^3).$$
This example shows how two arbitrary constants 
$\bar w^1+\bar w^2$ and $\bar w^1+\bar w^3$ may be ``hidden" in the arbitrary 
functions by replacing $\varphi_2$ with  $e^{-(\bar w^1+\bar w^3)}\varphi_2$ 
and $\psi_3$ with  $e^{-(\bar w^1+\bar w^3)}\varphi_3$.
 
The solution of the $\lam$-system is
\beq\label{ex-lam} 
	\lam^1=F\left(\frac{u^3\,u^2}{u^1}\right)
 	+\frac{u^3\,u^2}{u^1}F'\left(\frac{u^3\,u^2}{u^1}\right),
	\quad \lam^2=\lam^3=F\left(\frac{u^3\,u^2}{u^1}\right),
\eeq
where $F$ is an arbitrary function. In terms of the Riemann coordinates we have
$$ \lam^1(w)=h(w^1)+e^{w^1} h'(w^1),\quad \lam^2(w)=\lam^3(w)=h(w^1),$$
where $h$ is an arbitrary function. 
In accordance with Theorem 4.3 in \cite{jk1}, for a fixed point $(\bar w^1,\bar w^2,\bar w^3)\in \Omega$, 
an arbitrary function  $\phi(w^1)$, and a constant $\hat h$ there is a unique solution such that:
$$\lam^1(w^1,\bar w^2,\bar w^3)=\phi(w^1) \qquad\mbox{ and } 
\qquad\lam^2(\bar w^1,\bar w^2,\bar w^3)= \lam^3(\bar w^1,\bar w^2,\bar w^3)=\hat h.$$
Indeed, the solution \eq{ex-lam} is obtained by solving the ODE 
$\phi(w^1)=h(w^1)+e^{w^1} h'(w^1)$ for $h$ with initial value $\hat h=h(\bar w^1).$ 
\end{example}

\subsection{Examples with non-rich frames}
\subsubsection{\bf Non-rich frames admitting strictly hyperbolic conservative systems} 
Any frame corresponding to a non-rich Euler system (Example~\ref{gas_dyn}) is of this type. 
While non-rich Euler frames admit non-degenerate extensions, Examples~\ref{non-rich-rank1_(iib)} 
and \ref{non-rich-rank1_strict_hyp_case_3b} below show that this is not always the case.

\begin{example}\label{non-rich-rank1_(iib)}{\sc Non-rich frame  with strictly hyperbolic solutions 
of the $\lam$-system and exactly two vanishing $\bet^i$}: 
\[R_1=(-1,\,0,\, u^2+1)^T\,,\qquad R_2=\Big(\frac{u^3}{(u^2)^2-1},\,-1,\, u^1\Big)^T\,,
\qquad R_3=(1,\,0,\, 1-u^2)^T\,.\]
This is a non-rich frame with $\rank \eq{sev2}=\rank \eq{beta2}=1$. 
The unique algebraic $\lambda$-constraint 
is $2\lam^2=(1-u^2)\lam^1+(1+u^2)\lam^3$, and the $\lam$-system  
belongs to {\bf \em Case   IIa}  of Proposition~\ref{n=3lambda}. Its general  
solution  depends on two constants $C_1$, $C_2$ and is given by
\[\lam^1=C_1-2\,C_2,\qquad \lam^2=C_1+(u^2-1)\,C_2,\qquad\lam^3=C_1\,.\]
The flux in the corresponding conservative system \eq{cons} is given by 
\beq\label{bu_flux}
	f(u)=\left(\begin{array}{c}
	(C_1+C_2\,(u^2-1))\,u^1\,+C_2\,u^3,\\
	u^2\,(C_1-C_2+\frac 1 2 C_2\,u^2),\\
	C_2\,u^1\,(1-(u^2)^2)-C_2\,u^2\,u^3+(C_1-C_2)\,u^3
	\end{array}\right)\,.
\eeq
The unique algebraic $\beta$-constraint  is $(u^2-1)\beta^1=(u^2+1)\beta^3$. 
The general  solution of the $\bet$-system belongs to {\bf \em Case 2} in 
Theorem \ref{thm_non-rich_n=3_short} and is given by
$$\bet^1\equiv 0,\qquad \bet^2=F(u^2),\qquad\bet^3\equiv 0,$$
where $F$ is an arbitrary function.
The corresponding extensions (modulo affine parts) are given by
\[\eta(u^1,u^2,u^3) =G(u^2)\,,\qquad\quad\text{where $G''=F$.}\]
It is known that there are (uniformly) strictly hyperbolic systems of the 
type \eq{bu_flux} whose weak solutions may exhibit finite-time blowup 
in either BV or $L^\infty$; see Section 9.10 in \cite{daf}, and \cites{je,jy}. 
However, it not known whether this can occur for systems equipped with 
a strictly convex entropy. For further examples of blowup phenomena 
see \cites{sever, yo}. 
\end{example} 

\begin{example}\label{non-rich-rank1_strict_hyp_case_3b}
{\sc Non-rich frame  with strictly hyperbolic solutions 
of the $\lam$-system and exactly one vanishing $\bet^i$}: 
\[R_1=(u^1,\,u^2,\, u^3)^T\,,\qquad R_2=(u^2,\,u^1,\, u^3)^T\,,
\qquad R_3=(u^1,\,u^3,\, u^2)^T\,.\]
This is a non-rich frame with $\rank \eq{sev2}=\rank \eq{beta2}=1$. 
The unique algebraic $\lambda$-constraint is 
$(u^1-2\lam^2+u^3)\lam^1+(u^2-u^3)\lam^2+(u^2-u^1)\lam^3=0$, and the 
$\lam$-system belongs to {\bf \em Case   IIa}  of Proposition~\ref{n=3lambda}. 
Its general  solution  depends on two constants $C_1$, $C_2$ and is given by
\[\lam^1=C_1,\qquad \lam^2=C_1+\frac{C_2(u^1-u^2)}{(u^1+u^2+u^3)^2},
\qquad\lam^3=C_1+\frac{C_2(u^2-u^3)}{(u^1+u^2+u^3)^2}\,.\]

The unique algebraic $\beta$-constraint reduces to $\beta^1\equiv 0$.
In accordance with {\bf \em Case 3a} of Theorem \ref{thm_non-rich_n=3_short},
the general solution of the $\bet$-system depends on two arbitrary functions of 1 variable:
$$\bet^1\equiv 0,\qquad \bet^2=\frac{(u^2-u^1)^2}{u^1}F\Big(\frac{u^2+u^3}{u^1}\Big),
\qquad\bet^3=\frac{(u^3-u^2)^2}{u^1+u^2}G\Big(\frac{u^1+u^2}{u^3}\Big).$$
By exchanging $R_3$ in this example with the vector field $(u^1,\,u^3,\, u^3)^T$ 
we obtain a frame with the same type of $\lam$- and $\beta-$ dependencies. 
\end{example} 
\begin{remark}
The case of non-rich Euler frames together with the two examples above show 
that non-rich frames on $\Omega\subset\RR^3$, admitting strictly hyperbolic conservative systems \eq{cons},
may have none, one, or two associated $\bet^i$ vanishing. We have not been able
to find examples of this type for which all three $\bet^i$ vanish identically.
\end{remark}

\subsubsection{\bf{Non-rich frames with no corresponding strictly 
hyperbolic conservative systems}} 
There are many non-rich frames which do not admit strictly hyperbolic 
conservative systems, and some of these admit only trivial conservative systems. 
The size of the solution set of the corresponding $\bet$-system does not, in general, 
correspond to the size of the solution set of the $\lam$-system. 
In this category of frames there are examples where the $\bet$-system has only 
trivial solutions (Example~\ref{non-rich-rank1_(ia)}), non-trivial but degenerate 
solutions (Examples~\ref{non-rich-rank1_(iiib)}, \ref{ex_jk1_5.3}, \ref{ex_jk1_5.4},
\ref{deg-family}), as well as examples with non-degenerate 
solution (Examples~\ref{non-rich-rank1_(iid)}, \ref{non-rich-rank1_(ib)}, 
\ref{non-rich-rank1_(iid1)}).
\begin{example}\label{unequal_rank}{\sc $n=4$ frame with different algebraic 
ranks of the $\lam$- and the $\bet$-systems.} 
	This example shows that Proposition \ref{n=3_ranks} does not generalize to systems 
	with more than 3 equations: the algebraic parts of $\lam(\R)$ and $\bet(\R)$ may have 
	different ranks when $n\geq 4$. Consider the frame
	\[R_1=(1, 0, u^2, u^4)^T, \qquad R_2=(0, 1, u^1, 0)^T, \qquad 
	R_3=(u^3, 0, 1, 0)^T,  \qquad R_4=(1, 0, 0, 0)^T.\]
	In this case the rank of the algebraic part of the $\lam$-system is 3. On the other hand 
	the algebraic part of the $\bet$-system is given by 
	\[\bet^3=0\,,\qquad \bet^4=0\,,\qquad -\bet^3+u^1 \bet^4=0,\,\qquad \bet^3+(u^3)^2 \bet^4=0\,,\]
	which is of rank 2.
	The solution to the $\bet$ system depends on two arbitrary functions $F_1$ and $F_2$  of one variable:
	$$ \bet^1(u)=F_1(u^4)\,,\quad  \bet^2(u)=F_2(u^2)\,,\quad  \bet^3(u)=\bet^4(u)\equiv 0,$$
	while the $\lam$-system has only trivial solutions.
\end{example}

\begin{example}\label{non-rich-rank1_(ia)}{\sc a frame with non-trivial solutions of the 
$\lam$-system and only trivial solutions of the $\bet$-system:} 
	\[R_1=(u^1,\,-u^2,\, 0)^T,\qquad R_2=(-u^1,\, u^2,\, 1)^T, \qquad R_3=(1,\, 1,\, 1)^T.\]
	This is a non-rich frame with $\rank \eq{sev2}=1$ on the subset of $\RR^3$ where 
	$u_1\neq-u_2$. The $\lam$-system belongs to {\bf \em Case   IIb}  of 
	Proposition~\ref{n=3lambda} and has a non-trivial solution, depending on 1 constant
	and 1 function of 1 variable:
	\[\lam^1=\lam^2\equiv C,\quad \lam^3=(u^1+u^2)\,F(u^1\,u^2)+C.\]
	The only solution of the $\bet$-system is the trivial solution
	\[\bet^1=\bet^2=\bet^3\equiv 0\,.\]
	There are only trivial (affine) extensions in this
        case. This is an example of {\bf \em  Case 1} in Theorem \ref{thm_non-rich_n=3_short}.
\end{example}

\begin{example}\label{non-rich-rank1_(iiib)}{\sc a frame with non-trivial solutions of the 
$\lam$-system and non-trivial, but degenerate solutions of the $\bet$-system:}
	\[R_1=(1,\, u^2,\, 0)^T,\qquad R_2=(u^3,\, 1,\, 0)^T, \qquad R_3=(0,\, 0,\, 1)^T.\]
	This is a non-rich frame on $\RR^3$ with $\rank \eq{sev2}=1$. The $\lam$-system belongs 
	to {\bf \em Case   IIb}  of Proposition~\ref{n=3lambda} and its general solution depends on 1 constant
	and 1 function of 1 variable:
	\[\lam^1=\lam^2\equiv C,\quad \lam^3=H(u^3).\]
	 Two of the $\beta$'s vanish identically, while the 
	third depends on 1 function of 1 variable:
	\[\bet^1=\bet^2\equiv 0\,,\qquad \bet^3 =F(u^3)\,.\]
	The corresponding extensions (modulo affine parts) are
        given by
        \[\eta(u^1,u^2,u^3) =G(u^3)\,,
        \qquad\qquad\text{where $G''=F$.}\]
        	This is an example of {\bf \em  Case 2} in Theorem \ref{thm_non-rich_n=3_short}.
\end{example}
\begin{example}\label{ex_jk1_5.3}{\sc a frame with non-trivial solutions of the $\lam$-system 
and non-trivial, but degenerate solutions of the $\bet$-system:}
$$R_1=(1,\,0,\,0)^T,\qquad  R_2=(u^2,\,u^3,\, 1)^T, \qquad  R_3=(0,\,1,\,0)^T.$$
	This is a non-rich frame on $\RR^3$ with $\rank \eq{sev2} =\rank\eq{beta2}=1$. 
	The $\lam$-system belongs to {\bf \em Case IIb} of Proposition~\ref{n=3lambda} 
	and its general solution depends on 1 constant and 1 function of 1 variable:
	\[\lam^1=\lam^2\equiv C,\quad \lam^3=H\big((u^3)^2-2\,u^2\big).\]
	The general  solution of $\bet$-system  depends on 2 arbitrary functions of one variable:
	$$\bet^1=0,\qquad 
	\bet^2=\textstyle\frac{1}{2} F_1\big((u^3)^2-2 u^2\big)+F_2(u^3),\qquad  
	\bet^3= F_1'\big((u^3)^2-2\,u^2\big)\,.$$
	The corresponding extensions (modulo affine parts) are given by
        \[\eta(u^1,u^2,u^3) =\textstyle\frac{1}{4}G_1\big((u^3)^2-2\,u^2\big) + G_2(u^3)\,,
        \qquad\text{where $G_1'=F_1$ and $G_2''=F_2$.}\]
        This is an example of {\bf \em  Case 3a} in Theorem \ref{thm_non-rich_n=3_short}.
\end{example} 

\begin{example}\label{ex_jk1_5.4}   
{\sc two frames with only trivial solutions of the $\lam$-system and non-trivial, 
but degenerate solutions of the $\bet$-system:}
	$$R_1=(1,\,0,\, 0)^T,\qquad  R_2=(u^2,\,u^3,\, -u^2)^T,\qquad  R_3=(u^1+u^3,\,1,\, 0)^T.$$
	This is a non-rich frame on the subset of $\RR^3$, where $u^2\neq 0$ with 
	$\rank \eq{sev2}=1$. The $\lam$-system belongs to {\bf \em  Case  IIb} of 
	Proposition~\ref{n=3lambda}, and admits only trivial solutions:
	$$\lam^1=\lam^2=\lam^3\equiv C\in\RR.$$
	The general  solution of $\bet$-system  depends on 2 arbitrary functions of 
	one variable:
	$$\bet^1=0,\quad \bet^2=(u^2)^2\left[\int_*^{u^2}
	F_1(\tau^2+(u^3)^2)\Big[1+\frac{(u^3)^2}{\tau^2}\Big]\,d\tau+F_2(u^3)\right],
	\quad\bet^3=u^2\,F_1\big((u^2)^2+(u^3)^2\big).$$
	The corresponding extensions (modulo affine parts) are given by
        \[\eta(u^1,u^2,u^3) =\textstyle\frac{1}{2}\int_*^{u^2}G_1\big(\tau^2+(u^3)^2\big)\,d\tau + G_2(u^3)\,,
        \qquad\text{where $G_1'=F_1$ and $G_2''=F_2$.}\]
        This is an example of {\bf \em  Case 3a} in Theorem \ref{thm_non-rich_n=3_short}.
        
        Another example of the same is provided by 
        $$R_1:=(0,\, 0,\, 1)^T,\qquad  R_2=(0,\, 1,\, u^1)^T,\qquad  R_3=(u^3,\, 0,\, 1)^T.$$
        This frame belongs to {\bf \em Case IIa} of Proposition~\ref{n=3lambda} and to 
        {\bf \em  Case 3a} of Theorem \ref{thm_non-rich_n=3_short}. Its $\lambda$-system
        admits only trivial solutions $\lam^1=\lam^2=\lam^3\equiv C\in\RR$, while
	the general  solution of the $\bet$-system depends on 2 arbitrary functions of one variable:
	$$\bet^1=0,\qquad \bet^2 =F_2(u^2),\qquad  \bet^3 =F_1(u^1)\,(u^3)^2\,.$$
\end{example}

%

\begin{example}\label{deg-family}
{\sc a family of frames with non-trivial solutions of the $\lam$-system and non-trivial, 
but degenerate solutions of the $\bet$-system:}
\[R_1=(0,\,u^2,\, u^3)^T\,,\qquad R_2=(g(u^1),\,0,\, u^3)^T\,,
	\qquad R_3=(1,\,1,\, 0)^T\,.\]
These vector-fields form a frame on an open subset of $\RR^3$ where 
$u^3\,(u^2+g(u^1))\neq 0$. On this set the algebraic part \eq{beta2} of 
$\bet(\R)$ for this frame is
\beq\label{alg-deg-fanily}
	g'(u^1)\,\,\bet_1+\bet^2-g(u^1)\,u^2\,\bet^3 = 0\,.
\eeq
For a generic function $g(u^1)$, all three $\bet$'s are involved in the algebraic relation 
and we are in the situation described in Section~\ref{case(i)}.
The general solution of the $\bet$-system depends on 1 arbitrary constant $C$:
\[ \bet^1=0\, \quad \bet^2=C(g(u^1)+u^2), \quad \bet^3=C\,\frac{g(u^1)+u_2}{g(u^1)\, u^2}. \]
This is an Example of {\bf \em  Case 3b} in Theorem \ref{thm_non-rich_n=3_short}.

The number of $\bet$'s involved in the algebraic relations drops in the following specific 
cases:
\begin{enumerate}
\item[(i)] If $g(u^1)\equiv k\neq0\in\RR$ then \eq{alg-deg-fanily} reduces to 
$\bet^2-k\,u^2\,\bet^3=0$ and involves only two of $\bet^i$. We are then in the situation 
descried in Section~\ref{case(ii)}, but the general solution of
the $\bet$-system still depends on 1 arbitrary constant:
\[ \bet^1=0\, \quad \bet^2=C(k+u^2), \quad \bet^3=C\,\frac{k+u^2}{u^2}\,. \]
Thus, this particular case also falls into {\bf \em  Case 3b} of Theorem \ref{thm_non-rich_n=3_short}.
\item[(ii)] If $g\equiv 0$ then the algebraic $\beta$-relation \eq{alg-deg-fanily} reduces to $\bet^2=0$. 
This situation is described in Section~\ref{case(iii)}, and the general solution of
the $\bet$-system depends on 2 arbitrary functions of 1 variables:
\[ \bet^1=(u^2)^2\,G_1(u^2-u^1)\, \quad \bet^2=0, \quad \bet^3=G_2(u^1)\,. \]
This particular case falls into {\bf \em  Case 3a} of Theorem \ref{thm_non-rich_n=3_short}.
\end{enumerate}
Interestingly, the type of the  $\lam$-system does not depend on $g$.  
The algebraic part of $\lam(\R)$ is always equivalent to $\lam^1-\lam^2=0$, such  
that we are in {\bf \em  Case IIb} of Proposition~\ref{n=3lambda}. Its general solution 
depends on 1 arbitrary function of 1 variable and 1 constant:
\begin{align}
\mbox{if $g\neq 0$, then } & \lam^1=\lam^2\equiv K\,\mbox{ and } 
\lam^3=\frac{u^2+g(u^1)}{g(u^1)\,u^2}\, F\left(\frac{u^3}{u^2}
\,e^{-\int\frac{d\,u^1}{g(u^1)}}\right)+K;\\
\mbox{if $g\equiv 0$, then } & \lam^1=\lam^2\equiv K\,\mbox{ and }  \lam^3=F(u^1).
\end{align}
\end{example} 
\begin{example}\label{non-rich-rank1_(iid)}
	{\sc  a family of  frames with non-trivial solutions of the $\lam$-system and 
	non-degenerate solutions of the $\bet$-system:}
	\[R_1=(1,\, g(u),\, 0)^T,\qquad R_2=(-g(u),\, 1,\, 0)^T, \qquad R_3=(0,\, 1,\, 1)^T.\]
	The algebraic part  \eq{beta2} of $\bet(\R)$ for this frame is
	\[\frac{ \del_2 g+\del_3 g}{1+g^2}\,( \bet^2-\bet^1)=0.\]
	If $\del_2 g+\del_3 g=0 \Leftrightarrow g(u)=h(u_1,u_2-u_3)$ for some function 
	$h$ of two variables, then the frame is rich of rank 0, and the general solution of 
	the $\bet$ (as well as for the $\lam$ system) depends on 3 arbitrary functions of 
	1 variable. 
	
	Otherwise this is a non-rich frame on  $\RR^3$ with 
	$\rank \eq{sev2}=\rank \eq{beta2}=1$. The $\lam$-system belongs 
	to {\bf \em  Case IIb}  of Proposition~\ref{n=3lambda} and its general 
	solution depends on 1 constant and 1 function of 1 variable: 
	\[\lam^1=\lam^2\equiv C,\quad\lam ^3=F(u^3)\]
	The general solution of the $\bet$-system depends on 1 constant 
	and 1 function of 1 variable:
	\[\bet^1\equiv \bet^2=K\,(1+g(u)^2)\,,\qquad \bet^3 =F(u^3)\,.\]
	The corresponding extensions (modulo affine parts) are
        given by
        \[\eta(u^1,u^2,u^3) =\textstyle\frac{K}{2} \left[(u^1)^2 +(u^2- u^3)^2\right] +G(u^3)\,,
        \qquad\text{where $G''=F$.}\]
	 This is an example of {\bf \em  Case 4a} in Theorem \ref{thm_non-rich_n=3_short}.
\end{example}

\begin{example}\label{non-rich-rank1_(ib)}
{\sc a frame with non-trivial solutions of the $\lam$-system and non-degenerate 
solutions of the $\bet$-system:}
	\[R_1=(0,\, u^2,\, u^3)^T,\qquad R_2=(u^1,\, 0,\, u^3)^T, \qquad R_3=(1,\, 1,\, 0)^T.\]
	This is a non-rich frame  on the open subset of $\RR^3$ where $u^1+u^2\neq 0$ with 
	$\rank \eq{sev2}=\rank\eq{beta2}=1$. The $\lam$-system belongs to {\em \bf  Case IIb}  
	of Proposition~\ref{n=3lambda}, and its general solution depends on 1 constant
	and 1 function of 1 variable:
	\[\lam^1=\lam^2\equiv C,\quad\lam^3=\frac{u^1+u^2}{u^1\,u^2} F\left(\frac{u^3}{u^1\,u^2}\right)+C\]
	The general solution of the $\bet$-system depends on 2
        arbitrary constants:
	\[\bet^1= (K_1-K_2)(u^1+u^2)\,,\qquad
	\bet^2=K_2(u^1+u^2)\,,\qquad \bet^3
        =K_1\frac{(u^1+u^2)}{u^1u^2}\,.\]
        The corresponding extensions (modulo affine parts) are
        given by
        \[\eta(u^1,u^2,u^3) =K_1 \big[u^1\ln u^1 +u^2\ln u^2
        - u^3\ln u^3\big] +K_2(u^1-u^2)\ln u^3\,.\]
        This is an example of {\bf \em  Case 4b} in Theorem \ref{thm_non-rich_n=3_short}.
\end{example}

\begin{example}\label{non-rich-rank1_(iid1)} {\sc a frame with non-trivial solutions of the 
$\lam$-system and non-degenerate solutions of the $\bet$-system:}
\[R_1=(1,\,u^2,\, u^3)^T\,,\qquad R_2=(1,\,0,\, u^3)^T\,,\qquad R_3=(1,\,1,\, 0)^T\,.\]
	This is a non-rich frame on an open subset of $\RR^3$ where $u^2\neq 0$ with $\rank \eq{sev2}=1$. The $\lam$-system  
	belongs to {\em \bf  Case IIb}  of Proposition~\ref{n=3lambda} and its general solution  depends on 1 constant
	and 1 function of 1 variable:
	\[\lam^1=\lam^2\equiv C,\quad\lam ^3=F(u^3\,e^{-u^1})\]
	The $\bet$-system admits non-degenerate solutions and the general solution 
	depends on 1 constant:
	\[\bet^1= -K\, u^2\,,\qquad \bet^2=K\,
        u^2\,,\qquad \bet^3 \equiv K\,.\] 
        The corresponding extensions (modulo affine parts) are
        given by
        \[\eta(u^1,u^2,u^3) =K\, \left[\textstyle \frac{1}{2}(u^1)^2 +(1- u^2)\ln u^3\right]\,.\]
	This is an example of {\bf \em  Case 4c} in Theorem \ref{thm_non-rich_n=3_short}.
\end{example}


\begin{bibdiv}
\begin{biblist}
\bib{bj}{article}{
   author={Baiti, Paolo},
   author={Jenssen, Helge Kristian},
   title={Blowup in $L\sp \infty$ for a class of genuinely nonlinear
   hyperbolic systems of conservation laws},
   journal={Discrete Contin. Dynam. Systems},
   volume={7},
   date={2001},
   number={4},
   pages={837--853},
   issn={1078-0947},
   review={\MR{1849664 (2003m:35155)}},
}
\bib{bgs}{book}{
   author={Benzoni-Gavage, Sylvie},
   author={Serre, Denis},
   title={Multidimensional hyperbolic partial differential equations},
   series={Oxford Mathematical Monographs},
   note={First-order systems and applications},
   publisher={The Clarendon Press Oxford University Press},
   place={Oxford},
   date={2007},
   pages={xxvi+508},
   isbn={978-0-19-921123-4},
   isbn={0-19-921123-X},
   review={\MR{2284507 (2008k:35002)}},
}
\bib{bi1}{article}{
   author={Bianchini, Stefano},
   title={BV solutions of the semidiscrete upwind scheme},
   journal={Arch. Ration. Mech. Anal.},
   volume={167},
   date={2003},
   number={1},
   pages={1--81},
   issn={0003-9527},
   review={\MR{1967667 (2004k:35249)}},
}\bib{bi2}{article}{
   author={Bianchini, Stefano},
   title={Hyperbolic limit of the Jin-Xin relaxation model},
   journal={Comm. Pure Appl. Math.},
   volume={59},
   date={2006},
   number={5},
   pages={688--753},
   issn={0010-3640},
   review={\MR{2172805}},
}
\bib{bb}{article}{
   author={Bianchini, Stefano},
   author={Bressan, Alberto},
   title={Vanishing viscosity solutions of nonlinear hyperbolic systems},
   journal={Ann. of Math. (2)},
   volume={161},
   date={2005},
   number={1},
   pages={223--342},
   issn={0003-486X},
   review={\MR{2150387 (2007i:35160)}},
}
\bib{boi}{article}{
   author={Boillat, Guy},
   title={Nonlinear hyperbolic fields and waves},
   conference={
      title={Recent mathematical methods in nonlinear wave propagation
      (Montecatini Terme, 1994)},
   },
   book={
      series={Lecture Notes in Math.},
      volume={1640},
      publisher={Springer},
      place={Berlin},
   },
   date={1996},
   pages={1--47},
   review={\MR{1600900 (99k:35115)}},
   doi={10.1007/BFb0093705},
}
\bib{br1}{article}{
   author={Bressan, Alberto},
   title={Global solutions of systems of conservation laws by wave-front
   tracking},
   journal={J. Math. Anal. Appl.},
   volume={170},
   date={1992},
   number={2},
   pages={414--432},
   issn={0022-247X},
   review={\MR{1188562 (93k:35166)}},
}
\bib{br}{book}{
   author={Bressan, Alberto},
   title={Hyperbolic systems of conservation laws},
   series={Oxford Lecture Series in Mathematics and its Applications},
   volume={20},
   note={The one-dimensional Cauchy problem},
   publisher={Oxford University Press},
   place={Oxford},
   date={2000},
   pages={xii+250},
   isbn={0-19-850700-3},
   review={\MR{1816648 (2002d:35002)}},
}
\bib{bcggg}{book}{
   author={Bryant, R. L.},
   author={Chern, S. S.},
   author={Gardner, R. B.},
   author={Goldschmidt, H. L.},
   author={Griffiths, P. A.},
   title={Exterior differential systems},
   series={Mathematical Sciences Research Institute Publications},
   volume={18},
   publisher={Springer-Verlag},
   place={New York},
   date={1991},
   pages={viii+475},
   isbn={0-387-97411-3},
   review={\MR{1083148 (92h:58007)}},
}
\bib{amf}{book}{
   author={Choquet-Bruhat, Yvonne},
   author={DeWitt-Morette, C{\'e}cile},
   author={Dillard-Bleick, Margaret},
   title={Analysis, manifolds and physics},
   edition={2},
   publisher={North-Holland Publishing Co.},
   place={Amsterdam},
   date={1982},
   pages={xx+630},
   isbn={0-444-86017-7},
   review={\MR{685274 (84a:58002)}},
}
\bib{cl}{article}{
   author={Conlon, Joseph G.},
   author={Liu, Tai Ping},
   title={Admissibility criteria for hyperbolic conservation laws},
   journal={Indiana Univ. Math. J.},
   volume={30},
   date={1981},
   number={5},
   pages={641--652},
   issn={0022-2518},
   review={\MR{625595 (82j:35095)}},
   doi={10.1512/iumj.1981.30.30050},
}
\bib{daf}{book}{
   author={Dafermos, Constantine M.},
   title={Hyperbolic conservation laws in continuum physics},
   series={Grundlehren der Mathematischen Wissenschaften [Fundamental
   Principles of Mathematical Sciences]},
   volume={325},
   edition={3},
   publisher={Springer-Verlag},
   place={Berlin},
   date={2010},
   pages={xxxvi+708},
   isbn={978-3-642-04047-4},
   review={\MR{2574377}},
   doi={10.1007/978-3-642-04048-1},
}
\bib{delsze}{article}{
   author={De Lellis, Camillo},
   author={Sz{\'e}kelyhidi, L{\'a}szl{\'o}, Jr.},
   title={On admissibility criteria for weak solutions of the Euler
   equations},
   journal={Arch. Ration. Mech. Anal.},
   volume={195},
   date={2010},
   number={1},
   pages={225--260},
   issn={0003-9527},
   review={\MR{2564474}},
   doi={10.1007/s00205-008-0201-x},
}
\bib{dip1}{article}{
   author={DiPerna, Ronald J.},
   title={Global existence of solutions to nonlinear hyperbolic systems of
   conservation laws},
   journal={J. Differential Equations},
   volume={20},
   date={1976},
   number={1},
   pages={187--212},
   issn={0022-0396},
   review={\MR{0404872 (53 \#8672)}},
}
\bib{ell}{article}{
   author={Elling, Volker},
   title={A possible counterexample to well posedness of entropy solutions
   and to Godunov scheme convergence},
   journal={Math. Comp.},
   volume={75},
   date={2006},
   number={256},
   pages={1721--1733 (electronic)},
   issn={0025-5718},
   review={\MR{2240632 (2007e:35224)}},
   doi={10.1090/S0025-5718-06-01863-1},
}
\bib{fl}{article}{
   author={Friedrichs, K. O.},
   author={Lax, P. D.},
   title={Systems of conservation equations with a convex extension},
   journal={Proc. Nat. Acad. Sci. U.S.A.},
   volume={68},
   date={1971},
   pages={1686--1688},
   review={\MR{0285799 (44 \#3016)}},
}
\bib{gl}{article}{
   author={Glimm, James},
   title={Solutions in the large for nonlinear hyperbolic systems of
   equations},
   journal={Comm. Pure Appl. Math.},
   volume={18},
   date={1965},
   pages={697--715},
   issn={0010-3640},
   review={\MR{0194770 (33 \#2976)}},
}
\bib{gr}{book}{
   author={Godlewski, Edwige},
   author={Raviart, Pierre-Arnaud},
   title={Numerical approximation of hyperbolic systems of conservation
   laws},
   series={Applied Mathematical Sciences},
   volume={118},
   publisher={Springer-Verlag},
   place={New York},
   date={1996},
   pages={viii+509},
   isbn={0-387-94529-6},
   review={\MR{1410987 (98d:65109)}},
}
\bib{god}{article}{
   author={Godunov, S. K.},
   title={An interesting class of quasi-linear systems},
   language={Russian},
   journal={Dokl. Akad. Nauk SSSR},
   volume={139},
   date={1961},
   pages={521--523},
   issn={0002-3264},
   review={\MR{0131653 (24 \#A1501)}. English translation: Soviet Math.\ {\bf 2} (1961), 947--949},
}
\bib{ha}{article}{
   author={Harten, Amiram},
   title={On the symmetric form of systems of conservation laws with
   entropy},
   journal={J. Comput. Phys.},
   volume={49},
   date={1983},
   number={1},
   pages={151--164},
   issn={0021-9991},
   review={\MR{694161 (84j:35114)}},
}
\bib{hllm}{article}{
   author={Harten, Ami},
   author={Lax, Peter D.},
   author={Levermore, C. David},
   author={Morokoff, William J.},
   title={Convex entropies and hyperbolicity for general Euler equations},
   journal={SIAM J. Numer. Anal.},
   volume={35},
   date={1998},
   number={6},
   pages={2117--2127 (electronic)},
   issn={0036-1429},
   review={\MR{1655839 (99j:76107)}},
}
\bib{hr}{book}{
   author={Holden, Helge},
   author={Risebro, Nils Henrik},
   title={Front tracking for hyperbolic conservation laws},
   series={Applied Mathematical Sciences},
   volume={152},
   publisher={Springer-Verlag},
   place={New York},
   date={2002},
   pages={xii+363},
   isbn={3-540-43289-2},
   review={\MR{1912206 (2003e:35001)}},
}
\bib{je}{article}{
   author={Jenssen, Helge Kristian},
   title={Blowup for systems of conservation laws},
   journal={SIAM J. Math. Anal.},
   volume={31},
   date={2000},
   number={4},
   pages={894--908 (electronic)},
   issn={0036-1410},
   review={\MR{1752421 (2001a:35114)}},
}
\bib{jy}{article}{
   author={Jenssen, Helge Kristian},
   author={Young, Robin},
   title={Gradient driven and singular flux blowup of smooth solutions to
   hyperbolic systems of conservation laws},
   journal={J. Hyperbolic Differ. Equ.},
   volume={1},
   date={2004},
   number={4},
   pages={627--641},
   issn={0219-8916},
   review={\MR{2111577 (2006e:35225)}},
}
\bib{jk1}{article}{
   author={Jenssen, Helge Kristian},
   author={Kogan, Irina},
    journal={J. Hyperbolic Differ. Equ.},
   volume={07},
   number={2},
   pages={211--254},
   title={Systems of hyperbolic conservation laws with prescribed eigencurves},
   date={2010},
}
\bib{jmr}{article}{
   author={Joly, J.-L.},
   author={M{\'e}tivier, G.},
   author={Rauch, J.},
   title={A nonlinear instability for $3\times 3$ systems of conservation
   laws},
   journal={Comm. Math. Phys.},
   volume={162},
   date={1994},
   number={1},
   pages={47--59},
   issn={0010-3616},
   review={\MR{1272766 (95f:35145)}},
}
\bib{kru}{article}{
   author={Kru{\v{z}}kov, S. N.},
   title={First order quasilinear equations with several independent
   variables. },
   language={Russian},
   journal={Mat. Sb. (N.S.)},
   volume={81 (123)},
   date={1970},
   pages={228--255},
   review={\MR{0267257 (42 \#2159)}},
}
\bib{lax1}{article}{
   author={Lax, Peter},
   title={Shock waves and entropy},
   conference={
      title={Contributions to nonlinear functional analysis (Proc. Sympos.,
      Math. Res. Center, Univ. Wisconsin, Madison, Wis., 1971)},
   },
   book={
      publisher={Academic Press},
      place={New York},
   },
   date={1971},
   pages={603--634},
   review={\MR{0393870 (52 \#14677)}},
}
\bib{lax2}{article}{
   author={Lax, Peter D.},
   title={On symmetrizing hyperbolic differential equations},
   conference={
      title={Nonlinear hyperbolic problems},
      address={St.\ Etienne},
      date={1986},
   },
   book={
      series={Lecture Notes in Math.},
      volume={1270},
      publisher={Springer},
      place={Berlin},
   },
   date={1987},
   pages={150--151},
   review={\MR{910111}},
}
\bib{lee}{book}{
   author={Lee, John M.},
   title={Riemannian manifolds},
   series={Graduate Texts in Mathematics},
   volume={176},
   note={An introduction to curvature},
   publisher={Springer-Verlag},
   place={New York},
   date={1997},
   pages={xvi+224},
   isbn={0-387-98271-X},
   review={\MR{1468735 (98d:53001)}},
}
\bib{liu1}{article}{
   author={Liu, Tai Ping},
   title={The deterministic version of the Glimm scheme},
   journal={Comm. Math. Phys.},
   volume={57},
   date={1977},
   number={2},
   pages={135--148},
   issn={0010-3616},
   review={\MR{0470508 (57 \#10259)}},
}
\bib{mp}{article}{
   author={Menikoff, Ralph},
   author={Plohr, Bradley J.},
   title={The Riemann problem for fluid flow of real materials},
   journal={Rev. Modern Phys.},
   volume={61},
   date={1989},
   number={1},
   pages={75--130},
   issn={0034-6861},
   review={\MR{977944 (90a:35142)}},
   doi={10.1103/RevModPhys.61.75},
}
\bib{mo}{article}{
   author={Mock, M. S.},
   title={Systems of conservation laws of mixed type},
   journal={J. Differential Equations},
   volume={37},
   date={1980},
   number={1},
   pages={70--88},
   issn={0022-0396},
   review={\MR{583340 (81m:35088)}},
}
\bib{rj}{book}{
   author={Ro{\v{z}}destvenski{\u\i}, B. L.},
   author={Janenko, N. N.},
   title={Systems of quasilinear equations and their applications to gas
   dynamics},
   series={Translations of Mathematical Monographs},
   volume={55},
   note={Translated from the second Russian edition by J. R. Schulenberger},
   publisher={American Mathematical Society},
   place={Providence, RI},
   date={1983},
   pages={xx+676},
   isbn={0-8218-4509-8},
   review={\MR{694243 (85f:35127)}},
}
\bib{ser1}{book}{
   author={Serre, Denis},
   title={Systems of conservation laws. 1},
   note={Hyperbolicity, entropies, shock waves;
   Translated from the 1996 French original by I. N. Sneddon},
   publisher={Cambridge University Press},
   place={Cambridge},
   date={1999},
   pages={xxii+263},
   isbn={0-521-58233-4},
   review={\MR{1707279 (2000g:35142)}},
}
\bib{ser2}{book}{
   author={Serre, Denis},
   title={Systems of conservation laws. 2},
   note={Geometric structures, oscillations, and initial-boundary value
   problems;
   Translated from the 1996 French original by I. N. Sneddon},
   publisher={Cambridge University Press},
   place={Cambridge},
   date={2000},
   pages={xii+269},
   isbn={0-521-63330-3},
   review={\MR{1775057 (2001c:35146)}},
}
\bib{sev}{article}{
   author={S{\'e}vennec, Bruno},
   title={G\'eom\'etrie des syst\`emes hyperboliques de lois de
   conservation},
   language={French, with English and French summaries},
   journal={M\'em. Soc. Math. France (N.S.)},
   number={56},
   date={1994},
   pages={125},
   issn={0037-9484},
   review={\MR{1259465 (95g:35123)}},
}
\bib{sever}{article}{
   author={Sever, Michael},
   title={Distribution solutions of nonlinear systems of conservation laws},
   journal={Mem. Amer. Math. Soc.},
   volume={190},
   date={2007},
   number={889},
   pages={viii+163},
   issn={0065-9266},
   review={\MR{2355635}},
}
\bib{hirohiko}{book}{
   author={Shima, Hirohiko},
   title={The geometry of Hessian structures},
   publisher={World Scientific Publishing Co. Pte. Ltd., Hackensack, NJ},
   date={2007},
   pages={xiv+246},
   isbn={978-981-270-031-5},
   isbn={981-270-031-5},
   review={\MR{2293045 (2008f:53011)}},
   doi={10.1142/9789812707536},
}
\bib{sm}{book}{
   author={Smoller, Joel},
   title={Shock waves and reaction-diffusion equations},
   series={Grundlehren der Mathematischen Wissenschaften [Fundamental
   Principles of Mathematical Sciences]},
   volume={258},
   edition={2},
   publisher={Springer-Verlag},
   place={New York},
   date={1994},
   pages={xxiv+632},
   isbn={0-387-94259-9},
   review={\MR{1301779 (95g:35002)}},
}
\bib{sp1}{book}{
   author={Spivak, Michael},
   title={A comprehensive introduction to differential geometry. Vol. I},
   edition={2},
   publisher={Publish or Perish Inc.},
   place={Wilmington, Del.},
   date={1979},
   pages={xiv+668},
   isbn={0-914098-83-7},
   review={\MR{532830 (82g:53003a)}},
}
\bib{ts1}{article}{
   author={Tsar{\"e}v, S. P.},
   title={Poisson brackets and one-dimensional Hamiltonian systems of
   hydrodynamic type},
   language={Russian},
   journal={Dokl. Akad. Nauk SSSR},
   volume={282},
   date={1985},
   number={3},
   pages={534--537},
   issn={0002-3264},
   review={\MR{796577 (87b:58030)}},
}
\bib{yo}{article}{
   author={Young, Robin},
   title={Exact solutions to degenerate conservation laws},
   journal={SIAM J. Math. Anal.},
   volume={30},
   date={1999},
   number={3},
   pages={537--558 (electronic)},
   issn={0036-1410},
   review={\MR{1677943 (2000b:35165)}},
}

\end{biblist}
\end{bibdiv}

\end{document}